\documentclass[11pt,reqno]{amsart}
\usepackage{amsfonts}
\usepackage{amscd}
\usepackage{amssymb}
\usepackage{amsfonts}
\usepackage{amscd}
\usepackage{amsmath} 
\usepackage{mathrsfs}
\usepackage{dsfont}
\usepackage{enumerate}
\usepackage{xcolor}
\usepackage{pxfonts}
\usepackage{float} 
\usepackage{graphicx}
\allowdisplaybreaks
\usepackage{url}
\usepackage[hidelinks]{hyperref} 
 \hypersetup{
     colorlinks,
     linkcolor={red!70!black},
     citecolor={blue!80!black},
     urlcolor={blue!60!black}
 }

\date{\today}

\newtheorem*{theorem*}{Theorem}
\newtheorem{theorem}{Theorem}[section]

\newtheorem{lemma}[theorem]{Lemma}
\newtheorem{proposition}[theorem]{Proposition}
\theoremstyle{definition}
\newtheorem{definition}[theorem]{Definition}
\theoremstyle{remark}
\newtheorem{remark}[theorem]{\bf{Remark}}

\newtheorem*{assumption}{\textbf{Assumption}}

\numberwithin{equation}{section}

\newcommand{\R}{\mathbb R}
\newcommand{\hn}{\mathbb{H}^n}

\title[Inhomogeneous CR Yamabe Problem on the Heisenberg Group]{Profile decomposition and multiple positive solutions for the perturbed CR Yamabe equation on the Heisenberg group}
    \author[Basak]{Riju Basak}
	\address[R. Basak]{Department of Mathematics, National Taiwan Normal University, No. 88, Section 4, Tingzhou Road, Wenshan District, Taipei City, Taiwan 116, R.O.C.
	}
	\email{rijubasak52@gmail.com}

    \author[Chakraborty]{Souptik Chakraborty}
    \address[S. Chakraborty]{Gandhi Institute of Technology and Management (GITAM) University, Department of Mathematics and Statistics, Hyderabad, Telangana,  502329, India.
    }
	\email{soupchak9492@gmail.com, schakrab8@gitam.edu}
    
	\author[Rana]{Tapendu Rana}
	\address[T. Rana]{Department of Mathematics: Analysis, Logic and Discrete Mathematics, Ghent University, Krijgslaan 281, Building S8, B 9000 Ghent, Belgium.
	}
	\email{tapendurana@gmail.com, tapendu.rana@ugent.be}
    
	\author[Roychowdhury]{Prasun Roychowdhury}
    \address[P. Roychowdhury]{Department of Mathematics, Indian Institute of Technology Hyderabad, Kandi, Sangareddy, Telangana, 502285, India.
    } 
	\email{prasunrc@math.iith.ac.in}

\date{}
\keywords{Heisenberg group, Critical Folland--Stein exponent, Global compactness, Multiple positive solutions,  Improved Sobolev embedding,  Morrey spaces}
\subjclass[2020]{
Primary: 35R03, 
35H20; 
Secondary: 
35B33, 
35J20, 
26D10. 
}
   
\begin{document}
\begin{abstract}
In this article, we study an inhomogeneous critical nonlinear equation involving the sub-Laplacian on the Heisenberg group $\hn$. We prove the multiplicity of positive solutions for the critical problem
\begin{align*}
  \mathcal{L}_{\hn} u=|u|^{2^\star-2}u+f(\xi)
\quad \text{in } \mathbb H^n,
\qquad
u>0,\quad u\in S^{1,2}(\mathbb H^n),  
\end{align*}
where $ \mathcal{L}_{\hn} $ is the sub-Laplacian on $\hn$, $2^\star=\frac{2Q}{Q-2}$, $Q=2n+2$, $n\geq 1$, $S^{1,2}(\hn)$ is the homogeneous Sobolev space on $\hn$, and $f$ is a nontrivial nonnegative functional in the dual space $(S^{1,2}(\mathbb H^n))'$ satisfying a suitable smallness condition. The above mentioned equation appeared as a perturbation of the CR Yamabe equation on the Heisenberg group.

A major difficulty comes from the lack of compactness of the critical Folland--Stein embedding into critical Lebesgue space. To overcome this, we establish a Palais-Smale profile decomposition for the associated energy functional. The obtained Palais–Smale profile decomposition identifies the precise energy levels at which lack of compactness may occur via energy quantization, and shows that every noncompact Palais–Smale sequence decomposes into a finite superposition of weakly interacting bubbles. As a key analytic ingredient, we establish an improved Folland–Stein–Sobolev inequality involving the Morrey norm, which serves as a fundamental interpolation inequality and plays a crucial role in detecting the concentration of noncompact Palais–Smale sequences.
\end{abstract}
\maketitle

\section{Introduction}
The Yamabe problem, the question of finding a metric of constant scalar curvature within a given conformal class, occupies a central place in geometric analysis, sitting at the intersection of differential geometry, partial differential equations, and the calculus of variations. Analytically, it reduces to solving a nonlinear elliptic partial differential equation involving the critical Sobolev exponent \cite{Sch84, LP87}. This criticality is not merely a technical inconvenience: it is precisely what causes the Sobolev embedding to fail to be compact, so that minimizing (or Palais--Smale) sequences for the associated variational functional need not converge, but may instead concentrate and form \emph{bubbles} modeled on the Aubin-Talenti extremals \cite{Aub76, Tal76}. This loss of compactness is among the most fundamental obstructions in the study of critical variational problems, and understanding it through blow-up analysis, energy quantization, and the construction of bubbling solutions has driven an extensive and influential line of research beginning with the pioneering work of Brézis and Nirenberg \cite{BN83}, and developed further by P.~L.~Lions \cite{Lio84a, Lio84b}, Struwe \cite{Str84}, Bahri and Coron \cite{BC88}, and Tarantello \cite{Tar92}, among many others. More recently, an analogous and considerably more delicate circle of phenomena has emerged in the sub-Riemannian setting, where the CR Yamabe problem on strictly pseudoconvex CR manifolds gives rise to a critical equation governed by the anisotropic geometry of the Heisenberg group, and where the corresponding loss of compactness is modeled on bubbling profiles adapted to this non-Euclidean, non-isotropic structure.

To overcome this lack of compactness, an indispensable tool is the \emph{Palais--Smale profile decomposition}. In his seminal work, Struwe \cite{Str84} demonstrated that along any Palais--Smale sequence for critical elliptic problems on bounded Euclidean domains, compactness is lost exclusively through the formation of finitely many concentrating bubbles modeled on the Aubin--Talenti solutions \cite{Aub76, Tal76}. This characterization was subsequently extended to the full space $\dot{H}^1(\mathbb{R}^N)$ by Solimini \cite{Sol95}, elegantly reformulated via wavelet decompositions by Gérard \cite{Ger98} and Jaffard \cite{Jaf99}, and developed for fractional Sobolev spaces by Palatucci and Pisante \cite{PP14, PP15}. In the \emph{nonhomogeneous} setting, Tarantello \cite{Tar92} established a pioneering multiplicity result on bounded Euclidean domains, proving the existence of at least two distinct solutions when the nonhomogeneous perturbation is suitably small. More recently, this framework was extended to the full domain $\R^N$  by Bhakta and Pucci \cite{BP20}, who established a global profile decomposition and obtained multiple positive solutions for the fractional Laplacian on $\mathbb{R}^N$. Related multiplicity and compactness phenomena for nonlocal scalar field equations were further explored in \cite{BCG23}.
\subsection*{The subelliptic setting}
The Heisenberg group $\mathbb{H}^n$ is the model space in sub-Riemannian and CR geometry. Let $Q=2n+2$ be its homogeneous dimension and $2^{\star}=\frac{2Q}{Q-2}$ the critical Folland--Stein exponent. The Folland--Stein embedding \cite{FS74,FS82} states that the homogeneous Sobolev space $S^{1,2}(\mathbb{H}^n)$ embeds continuously, but not compactly, into $L^{2^{\star}}(\mathbb{H}^n)$. For a detailed description of the space $S^{1,2}(\hn)$, see \eqref{hom-sobl}, and for notational convenience, throughout the remainder of the paper we denote $S^{1,2}(\hn)$ by $S^1(\hn)$. The failure of compactness is driven by the invariance of the space under left-translations and non-isotropic dilations $\delta_\lambda$, and the corresponding critical profiles are the \emph{Jerison--Lee bubbles}. The subelliptic CR Yamabe equation is given by the homogeneous critical problem
\begin{equation}\label{hom_eqn}
    \mathcal{L}_{\mathbb{H}^n}u = |u|^{2^{\star}-2}u,
    \quad u>0,\quad u\in S^1(\mathbb{H}^n),
\end{equation}
where $\mathcal{L}_{\mathbb{H}^n}$ denotes the sub-Laplacian on $\mathbb{H}^n$ defined in \eqref{eq_def_sub_lap}. In their seminal work, Jerison and Lee \cite[Corollary~C]{JL88} completely classified all finite energy positive solutions to \eqref{hom_eqn}. In fact, subelliptic regularity theory implies that any positive weak solution is classical; see, for instance, \cite{GV00} and the discussion in \cite[Section~3]{CU01}. Subsequently, \cite{CLMR25, FV23} established uniqueness results for positive solutions of \eqref{hom_eqn} under certain decay conditions at infinity; see also \cite{PV25}. Very recently, Liu \cite[Theorem~1.1]{Liu25} obtained a complete classification of positive solutions to \eqref{hom_eqn} without any decay or integrability assumptions. Since these results will be used frequently throughout this paper, for the reader's convenience, we paraphrase them in the following theorem.

\begin{theorem}\label{thm_JL_bubble}
Suppose $u\in S^{1}(\mathbb{H}^n)$ is a positive weak solution of \eqref{hom_eqn}. Then $u$ is a smooth function and takes the form of a left-translation and dilation of the standard Jerison-Lee bubble, i.e., 
\begin{align}\label{eq_jer_Lee_bub}
u(\xi) =D_{\eta,\lambda}U(\xi)\coloneqq \lambda^{-\frac{Q-2}{2}}U\left(\delta_{\frac{1}{\lambda}}(\eta^{-1}\circ\xi)\right),    
\end{align}
for some $\lambda>0$ and $\eta\in\mathbb{H}^n$, where for $\xi=(z,t) \in  \mathbb{H}^n$,
\begin{align}\label{eq_J_L_extreme}
U(\xi) \coloneqq C_0\left(t^2+(1+|z|^2)^2\right)^{-\frac{Q-2}{4}}    
\end{align}
satisfies \eqref{hom_eqn} and $C_0$ is a positive normalization constant such that
\begin{align}\label{sobolev-const}
    \int_{\mathbb{H}^n}\left|\nabla_{\mathbb{H}^n}U(\xi)\right|^2 d\xi = \int_{\mathbb{H}^n}\left|U(\xi)\right|^{2^{*}} d\xi \, = \mathcal{S}_{Q}^{Q/2},
\end{align}
where $\mathcal{S}_{Q}$ denotes the best constant in the Folland-Stein-Sobolev embedding.
\end{theorem}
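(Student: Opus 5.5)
This theorem is a paraphrase of known results, so the proof assembles citations rather than building a new argument. It has three parts.

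\textbf{Regularity.} Let $u\in S^1(\mathbb{H}^n)$ be a positive weak solution of \eqref{hom_eqn}. A Moser iteration adapted to the Folland--Stein inequality, in the style of Brezis--Kato, shows that $u\in L^p_{\mathrm{loc}}$ for every $p<\infty$. Indeed, writing the equation as $\mathcal{L}_{\mathbb{H}^n}u=V u$ with $V=u^{2^\star-2}\in L^{Q/2}$, the standard truncation argument applies. Then:
\begin{enumerate}[(i)]
\item Folland's subelliptic $L^p$ estimates give $u\in \Gamma^{2,p}_{\mathrm{loc}}$, hence H\"older continuity in the Folland--Stein classes $\Gamma^\alpha$.
\item Schauder estimates for $\mathcal{L}_{\mathbb{H}^n}$ give $u\in\Gamma^{2,\alpha}_{\mathrm{loc}}$. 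Since $u>0$, the nonlinearity $u^{2^\star-1}$ is smooth in $u$.
\item Bootstrapping (i)--(ii) yields $u\in C^\infty$.
\end{enumerate}
This is the content of \cite{GV00} and \cite[Section~3]{CU01}, which we would cite rather than reprove.

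\textbf{Classification.} With $u$ smooth and positive, we apply Liu's theorem \cite[Theorem~1.1]{Liu25}. It classifies all positive solutions with no decay assumption, so it yields $u=D_{\eta,\lambda}U$ directly. Alternatively, one can stay in the finite-energy class. There $u\in S^1$ is a critical point of the Sobolev quotient with energy in a known range. Kelvin inversion on $\mathbb{H}^n$ transfers the problem to the CR sphere via the Cayley transform, and the Jerison--Lee integral identities \cite[Corollary~C]{JL88} then show that the pseudo-Hermitian structure is CR-equivalent to the standard one. Hence $u$ is of the stated form.

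This step is the main obstacle if one tries to prove it from scratch. It needs the Jerison--Lee divergence identity, in effect a CR analogue of the Obata argument, which is a delicate tensor computation. For this reason we rely on \cite{JL88,Liu25} rather than reprove it.

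\textbf{Normalization.} We verify that \eqref{eq_J_L_extreme} solves the equation by direct computation. The function $(t^2+(1+|z|^2)^2)^{-(Q-2)/4}$ satisfies $\mathcal{L}_{\mathbb{H}^n}U=cU^{2^\star-1}$ for an explicit $c>0$. Choosing $C_0$ so that $C_0^{2^\star-2}c=1$ normalizes the equation. Testing the equation against $U$ gives $\int|\nabla_{\mathbb{H}^n}U|^2=\int U^{2^\star}$. Since $U$ is the Jerison--Lee extremal for the Folland--Stein inequality, $\|\nabla_{\mathbb{H}^n} U\|_2^2\big/\|U\|_{2^\star}^2=\mathcal{S}_Q$. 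Writing $A=\int|\nabla_{\mathbb{H}^n}U|^2=\int U^{2^\star}$, this reads $A^{1-2/2^\star}=\mathcal{S}_Q$. Because $1-2/2^\star=2/Q$, we get $A=\mathcal{S}_Q^{Q/2}$, which is \eqref{sobolev-const}.

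Finally, left translations and the dilations $\delta_\lambda$ commute with $\mathcal{L}_{\mathbb{H}^n}$ up to the factor $\lambda^{-2}$. The weight $\lambda^{-(Q-2)/2}$ in \eqref{eq_jer_Lee_bub} exactly compensates for this. Hence every $D_{\eta,\lambda}U$ is again a solution with the same energy.
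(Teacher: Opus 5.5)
Your route is the same as the paper's. The paper gives no argument of its own for this theorem: it cites Garofalo--Vassilev and Citti--Uguzzoni for regularity and Jerison--Lee (Corollary~C) and Liu for the classification, exactly as you do. Your derivation of $\int|\nabla_{\mathbb{H}^n}U|^2=\int U^{2^\star}=\mathcal{S}_Q^{Q/2}$ from the equation plus extremality ($A^{2/Q}=\mathcal{S}_Q$) is also correct.

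One step, however, fails as written: the ``direct computation'' that $\phi=(t^2+(1+|z|^2)^2)^{-(Q-2)/4}$ satisfies $\mathcal{L}_{\mathbb{H}^n}\phi=c\,\phi^{2^\star-1}$. Section~2 fixes the fields $X_j=\partial_{x_j}+\frac12 y_j\partial_t$ and $X_{n+j}=\partial_{y_j}-\frac12 x_j\partial_t$, which give the operator \eqref{eq_def_sub_lap}. With these fields the claim is false for every $c$. Set $g=t^2+(1+|z|^2)^2$ and $\phi=g^{-n/2}$. Then
\[
\mathcal{L}_{\mathbb{H}^n}\phi=\frac n2\,g^{-\frac n2-2}\Bigl[g\sum_{j=1}^{2n}X_j^2g-\Bigl(\frac n2+1\Bigr)|\nabla_{\mathbb{H}^n}g|^2\Bigr].
\]
For the paper's fields, $\sum_jX_j^2g=8n(1+|z|^2)+\frac{17}{2}|z|^2$ and $|\nabla_{\mathbb{H}^n}g|^2=\bigl(16(1+|z|^2)^2+t^2\bigr)|z|^2$. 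Hence $\mathcal{L}_{\mathbb{H}^n}\phi/\phi^{2^\star-1}$ equals $4n^2$ on $\{z=0\}$ but $4n^2-\frac{15n}{4}$ at $|z|=1$, $t=0$. More generally, for fields $\partial_{x_j}+a\,y_j\partial_t$, $\partial_{y_j}-a\,x_j\partial_t$, the bracket is a multiple of $g$ exactly when $a=2$.

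So \eqref{eq_J_L_extreme} is the bubble for the Jerison--Lee normalization $\partial_{x_j}+2y_j\partial_t$, $\partial_{y_j}-2x_j\partial_t$, with group law $t+s+2\operatorname{Im}(z\cdot\bar w)$. It is not the bubble for the $\frac12$-normalization the paper uses. The fix is the map $(z,t)\mapsto(z,4t)$, a group isomorphism from the paper's $\mathbb{H}^n$ onto the Jerison--Lee model. It commutes with $\delta_\lambda$, sends left translations to left translations, and carries $\partial_{x_j}+\frac12 y_j\partial_t$ to $\partial_{x_j}+2y_j\partial_t$.

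Two consequences follow. In the paper's coordinates the standard bubble is $C_0\bigl((1+|z|^2)^2+16t^2\bigr)^{-(Q-2)/4}$, with the same constant $C_0$. Also, the Jerison--Lee and Liu classifications are stated in their own normalization, so you must transport them through this isomorphism before applying them; the form $D_{\eta,\lambda}U$ survives the transport. The defect comes from the statement itself, not from your strategy. Once $U$ is corrected, your normalization argument goes through verbatim.
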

Critical equations on $\mathbb{H}^n$ and related Carnot groups, particularly subelliptic analogous of the Br\'ezis-Nirenberg problem, have been the subject of extensive study. On bounded domains, Garofalo and Lanconelli \cite{GL92} established sharp existence and nonexistence results for semilinear critical equations, revealing a threshold phenomenon analogous to the classical Euclidean case. Furthermore, Citti and Uguzzoni \cite{CU01} demonstrated that the existence of solutions is deeply influenced by the topology of the domain. On the entire space $\mathbb{H}^n$, the Jerison--Lee classification dictates that positive solutions are strictly confined to the bubble family, a rigidity further reinforced by recent Liouville-type theorems \cite{PV25, CLMR25}. From a variational perspective, a global compactness theorem in the spirit of Struwe was recently established on some bounded domains of $\mathbb{H}^n$ by Palatucci, Piccinini, and Temperini \cite{PPT25}; for an overview of related works in this direction, we refer the reader to \cite{PPT25, Gam01, GY01} and the references therein. Additionally, critical singular problems on Carnot groups have been investigated in \cite{BGV26}.

\subsection*{The problem and main results}
Motivated by the rigidity of the homogeneous case, we study the following \emph{inhomogeneous} critical subelliptic problem on $\mathbb{H}^n$:
\begin{equation}\label{eqn_pert_Hn}\tag{$\wp$}
\begin{cases}
    \mathcal{L}_{\mathbb{H}^n}u = a(\xi)|u|^{2^{\star}-2}u + f(\xi)
    & \text{in }\mathbb{H}^n,\\
    u>0,\quad u\in S^1(\mathbb{H}^n).
\end{cases}
\end{equation}
When $a\equiv 1$ and $f\equiv 0$, \eqref{eqn_pert_Hn} reduces to the standard CR Yamabe equation \eqref{hom_eqn}. However, the inclusion of the nonhomogeneous term $f\not\equiv 0$ breaks the scale and translation invariances of the space, opening the possibility for new positive solutions beyond the Jerison--Lee family. Consequently, \eqref{eqn_pert_Hn} acts as an inhomogeneous perturbation of the homogeneous critical equation, serving as the direct subelliptic analogue of the nonhomogeneous Euclidean problems studied in \cite{Tar92, BP20}.

Throughout the article, we impose the following conditions on the potential $a$ and the nonhomogeneous term $f$:
\begin{assumption}[\textbf{A}]\label{assum_main}
$a\in L^\infty(\mathbb{H}^n)\cap C(\mathbb{H}^n)$ with $\inf_{\mathbb{H}^n}a>0$ and $\lim_{\|\xi\|\to\infty}a(\xi) = 1$.
\end{assumption}
\begin{assumption}[\textbf{F}]\label{cond_on_f}
$f\not\equiv 0$ is a nonnegative functional in $(S^1(\mathbb{H}^n))'$, meaning ${}_{(S^1)'}\langle f,\varphi\rangle_{S^1}\geq 0$ for all $\varphi\geq 0$ in $S^1(\mathbb{H}^n)$.
\end{assumption}
\begin{definition}
A function $u\in S^1(\mathbb{H}^n)$ is said to be a positive (weak) solution of \eqref{eqn_pert_Hn} if $u>0$ almost everywhere on $\hn$ and it satisfies the following variational identity
\[
    \int_{\mathbb{H}^n}\nabla_{\mathbb{H}^n}u\cdot \nabla_{\mathbb{H}^n}\varphi\,d\xi
    = \int_{\mathbb{H}^n}a(\xi)|u|^{2^{\star}-2}u\,\varphi\,d\xi
    + {}_{(S^1)'}\langle f,\varphi\rangle_{S^1},
\]
for all $\varphi\in S^1(\mathbb{H}^n)$.
\end{definition}

From a variational perspective, these weak solutions arise naturally as critical points of an underlying energy. Specifically, solutions to \eqref{eqn_pert_Hn} correspond exactly to the positive critical points of the Euler--Lagrange functional $\mathcal{E}_{a,f}:S^1(\mathbb{H}^n)\to\mathbb{R}$, defined by
\begin{equation}\label{def_E_K,f}
    \mathcal{E}_{a,f}(u)
    := \frac{1}{2}\int_{\mathbb{H}^n}|\nabla_{\mathbb{H}^n}u|^2\,d\xi
    - \frac{1}{2^{\star}}\int_{\mathbb{H}^n}a(\xi)|u|^{2^{\star}}\,d\xi
    - {}_{(S^1)'}\langle f,u\rangle_{S^1}.
\end{equation}

To establish the existence of such critical points, one must analyze the compactness properties of this functional. This is typically done by studying sequences along which the energy is bounded and its derivative vanishes.

\begin{definition}
A sequence $\{u_k\} \subset S^{1}(\mathbb{H}^n)$ is said to be a \textit{Palais--Smale sequence} for $\mathcal{E}_{a,f}$ at level $c \in \mathbb{R}$ (denoted as a $(PS)_c$ sequence) if
\begin{equation*}
    \mathcal{E}_{a,f}(u_k) \to c \quad \text{and} \quad  \mathcal{E}_{a,f}'(u_k) \to 0 \quad\text{in } (S^1(\mathbb{H}^n))' \quad \text{as } k \to \infty.
\end{equation*}
The functional $\mathcal{E}_{a,f}$ is said to satisfy the \textit{$(PS)_c$ condition} if every Palais-Smale sequence at level $c$ is relatively compact in $S^1(\hn)$. 
\end{definition}
 As we have recalled earlier, the critical Folland--Stein--Sobolev embedding $S^1(\mathbb{H}^n)\hookrightarrow L^{2^\star}(\mathbb{H}^n)$ fails to be compact, with the loss of compactness driven by the invariance of both norms under left translations and non-isotropic dilations. Consequently, the standard Palais--Smale condition generally fails for $\mathcal{E}_{a,f}$: a $(PS)_c$ sequence need not have a strongly convergent subsequence. To the best of our knowledge, a complete Palais--Smale profile decomposition on the \emph{full} Heisenberg group $\mathbb{H}^n$, as opposed to bounded subdomains, has not previously been established in the subelliptic setting; the closest prior result is the global compactness theorem of Palatucci, Piccinini and Temperini \cite{PPT25}, which addresses energy approximation of the critical Sobolev embedding but does not treat the nonhomogeneous problem \eqref{eqn_pert_Hn}. Our first main result fills this gap by providing a complete profile decomposition for any $(PS)_c$ sequence of $\mathcal{E}_{a,f}$ on the full Heisenberg group $\mathbb{H}^n$ under Assumptions~\hyperref[assum_main]{\textbf{(A)}} and~\hyperref[cond_on_f]{\textbf{(F)}}, giving a precise description of this failure of compactness.

\begin{theorem}\label{thm_PS_decomp}
Let $\{u_k\}\subset S^1(\mathbb{H}^n)$ be a $(PS)_c$ sequence for
$\mathcal{E}_{a,f}$ at level $c\in\mathbb{R}$. Then, up to a subsequence
(still denoted by $\{u_k\}$), there exists an integer $m\geq 0$ such that the following holds

\begin{enumerate}
    \item\textit{(Behavior of parameters.)} For each $j=1,\ldots,m$, there exist sequences of points $\{\xi_k^{(j)}\}\subset\mathbb{H}^n$ and
scales $\{\lambda_k^{(j)}\}\subset(0,\infty)$, such that 
    \begin{equation}\label{eq:param_behavior}
        |\log\lambda_k^{(j)}| + \|\xi_k^{(j)}\|\to\infty, \quad \text{as } k\to\infty.
     \end{equation} 
    
    \item \textit{(Existence of weak solutions.)} There exists a weak solution
$u^{(0)}\in S^1(\mathbb{H}^n)$ to \eqref{eqn_pert_Hn} (without the positivity condition), and $m$ nontrivial weak solutions $u^{(1)},\ldots,u^{(m)}\in S^1(\mathbb{H}^n)$ to the limiting equations
\begin{equation}\label{eq:limiting_eq_j}
    \mathcal{L}_{\hn}u^{(j)} = a_\infty^{(j)}\, |u^{(j)}|^{2^{\star}-2}u^{(j)}
    \quad\text{in }\mathbb{H}^n,\quad j=1,\ldots,m,
\end{equation}
where
\begin{equation}\label{crucial_mistake}
    a_\infty^{(j)}\coloneqq \lim_{k\to \infty} a(\xi_k^{(j)}\circ\delta_{\lambda_k^{(j)}}(\tilde{e})),
\end{equation}
and $\tilde{e}$ is some non identity element of $\hn$.
    \item\label{it_profile_PS_thm} \textit{(Profile decomposition.)} The sequence decomposes as
    \begin{equation}\label{eq_seq_decomp}
        u_k - \left(u^{(0)}
        +\sum_{j=1}^m
        (a_\infty^{(j)})^{-\frac{Q-2}{4}}
        D_{\xi_k^{(j)},\lambda_k^{(j)}}u^{(j)}\right)
        \to 0
        \quad\text{strongly in }S^1(\mathbb{H}^n).
    \end{equation}

    \item\label{ite_aymp_ortho}\textit{(Asymptotic orthogonality.)} The distinct bubble profiles do not interact asymptotically, i.e., for $i\neq j$,  
    \begin{equation}\label{eq:orth}
        \left|\log\frac{\lambda_k^{(i)}}{\lambda_k^{(j)}}\right|
        +\left\|\delta_{1/\lambda_k^{(j)}}
        \!\left((\xi_k^{(j)})^{-1}\circ\xi_k^{(i)}\right)\right\|
        \to\infty,\quad\text{as }k\to\infty.
    \end{equation}

    \item\label{item_en_dec_PS_thm} \textit{(Energy decoupling.)} We have
    \begin{equation}\label{eq:energy_decomp}
        \mathcal{E}_{a,f}(u_k)
        \to\mathcal{E}_{a,f}(u^{(0)})
        +\sum_{j=1}^m
        (a_\infty^{(j)})^{-\frac{Q-2}{2}}
        \mathcal{E}_{1,0}(u^{(j)}),\quad\text{as }k\to\infty,
    \end{equation}
    where $\mathcal{E}_{1,0}$ denotes the energy functional \eqref{def_E_K,f}
    with $a\equiv 1$ and $f\equiv 0$.
\end{enumerate}  
Furthermore, if $u_k\geq 0$ a.e.\ for all $k$, then $u^{(0)}\geq 0$ and
$u^{(j)}\geq 0$ for all $j=1,\ldots,m$. Since each $u^{(j)}$ is a
nontrivial nonnegative weak solution to \eqref{eq:limiting_eq_j}, the
strong maximum principle implies $u^{(j)}>0$ a.e.\ in $\mathbb{H}^n$. Therefore, by the uniqueness of positive solutions, each $u^{(j)}$ is necessarily of the form
\[
    u^{(j)} = D_{\eta^{(j)},\lambda^{(j)}}U
    \quad\text{for some }\eta^{(j)}\in\mathbb{H}^n,\;\lambda^{(j)}>0,
\]
where $U$ is the standard Jerison--Lee bubble \eqref{eq_J_L_extreme}.
\end{theorem}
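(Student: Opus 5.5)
We follow the Struwe--Solimini iterative extraction scheme, adapted to the Heisenberg dilations $\delta_\lambda$ and left translations. The engine for detecting bubbles is the improved Folland--Stein--Sobolev inequality with a Morrey norm announced in the abstract, $\|u\|_{L^{2^\star}}\lesssim \|u\|_{S^1}^{\theta}\|u\|_{\mathcal{M}}^{1-\theta}$.

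\textbf{Step 1 (boundedness and the weak limit).} From $\mathcal{E}_{a,f}(u_k)-\frac{1}{2^\star}\langle \mathcal{E}_{a,f}'(u_k),u_k\rangle$ we get $\frac{1}{Q}\|u_k\|_{S^1}^2\le C+o(1)\|u_k\|_{S^1}+C\|f\|\,\|u_k\|_{S^1}$. Hence $\{u_k\}$ is bounded in $S^1(\hn)$. Up to a subsequence, $u_k\rightharpoonup u^{(0)}$ weakly in $S^1$, $u_k\to u^{(0)}$ in $L^2_{loc}$ (Rellich on bounded sets for the horizontal gradient), and $u_k\to u^{(0)}$ a.e. Testing with $\varphi\in C_c^\infty$ and using a.e.\ convergence together with boundedness of $|u_k|^{2^\star-2}u_k$ in $L^{2^\star/(2^\star-1)}$, we find that $u^{(0)}$ solves \eqref{eqn_pert_Hn}.

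Set $v_k^1=u_k-u^{(0)}$. A Brezis--Lieb argument, applied both to the $a$-weighted $L^{2^\star}$ norm and to the derivative (using $|u_k|^{2^\star-2}u_k-|v_k|^{2^\star-2}v_k-|u^{(0)}|^{2^\star-2}u^{(0)}\to0$ in the dual), shows three things. First, $f$ drops out because $\langle f,v_k^1\rangle\to0$. Second, $\mathcal{E}_{a,f}(u_k)=\mathcal{E}_{a,f}(u^{(0)})+\mathcal{E}_{a,0}(v_k^1)+o(1)$. Third, $v_k^1$ is a $(PS)$ sequence for $\mathcal{E}_{a,0}$ with $v_k^1\rightharpoonup0$.

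\textbf{Step 2 (extraction of a bubble).} If $v_k^1\to0$ in $L^{2^\star}$, then $\langle\mathcal{E}_{a,0}'(v_k^1),v_k^1\rangle\to0$ gives $v_k^1\to0$ in $S^1$, and we stop with $m=0$. Otherwise, the improved inequality forces $\liminf\|v_k^1\|_{\mathcal{M}}>0$. So there are balls $B(\xi_k,\lambda_k)$ (Korányi balls) with
\[
\lambda_k^{-2}\int_{B(\xi_k,\lambda_k)}|v_k^1|^2\,d\xi\ge c>0 .
\]
Define $w_k=\lambda_k^{\frac{Q-2}{2}}v_k^1(\xi_k\circ\delta_{\lambda_k}\cdot)$; this is the inverse of $D_{\xi_k,\lambda_k}$, an isometry of $S^1$ and $L^{2^\star}$. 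Then $\int_{B(0,1)}|w_k|^2\ge c$, so $w_k\rightharpoonup w\ne0$ by local compactness. Since $v_k^1\rightharpoonup0$, the parameters must satisfy \eqref{eq:param_behavior}: otherwise the rescaling would converge to a fixed one and $w=0$.

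The rescaled $w_k$ is a PS-type sequence for the functional with coefficient $a(\xi_k\circ\delta_{\lambda_k}\,\cdot)$. Here we must show that this coefficient converges locally uniformly, or at least a.e.\ on compacts, to a constant $a_\infty$. If $\lambda_k\to0$, continuity of $a$ plus boundedness of $\xi_k$ handles the case $\xi_k$ bounded. If $\|\xi_k\|\to\infty$ with $\lambda_k$ bounded, or if $\lambda_k\to\infty$, then $a\to1$ at infinity gives $a_\infty=1$ away from a shrinking set. The case $\lambda_k\to0$ with $\xi_k$ unbounded also yields $1$. The definition \eqref{crucial_mistake} through a fixed point $\tilde e\ne0$ is exactly designed to capture the relevant value; in the case $\lambda_k\to\infty$ with $\xi_k$ bounded, the image $\xi_k\circ\delta_{\lambda_k}(\tilde e)$ goes to infinity. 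We then get that $w$ solves $\mathcal{L}_{\hn}w=a_\infty|w|^{2^\star-2}w$. Passing to $u^{(1)}=a_\infty^{\frac{Q-2}{4}}w$ normalizes the equation; this is the scaling appearing in \eqref{eq_seq_decomp}, and we must make sure the stated equation \eqref{eq:limiting_eq_j} matches it.

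\textbf{Step 3 (iteration, orthogonality, termination).} Set $v_k^2=v_k^1-D_{\xi_k,\lambda_k}w$. Brezis--Lieb in rescaled variables gives two facts: $v_k^2$ is again $(PS)$ for $\mathcal{E}_{a,0}$ and $\rightharpoonup0$, and
\[
\|v_k^2\|^2=\|v_k^1\|^2-\|w\|^2+o(1), \qquad \mathcal{E}_{a,0}(v_k^1)=a_\infty^{-\frac{Q-2}{2}}\mathcal{E}_{1,0}(u^{(1)})+\mathcal{E}_{a,0}(v_k^2)+o(1).
\]
The step where the potential $a$ is handled is the delicate one. We must show $\int a|D_{\xi_k,\lambda_k}w|^{2^\star}\to a_\infty\int|w|^{2^\star}$ by dominated convergence in rescaled variables. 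We also need the cross terms with the nonconstant $a$ to vanish, which uses $w_k-w\to0$ a.e.\ and a Brezis--Lieb lemma with bounded weights.

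Every nontrivial solution of the limit equation has $\|w\|^2\ge \mathcal{S}_Q^{Q/2}a_\infty^{-(Q-2)/2}\ge \mathcal{S}_Q^{Q/2}\|a\|_\infty^{-(Q-2)/2}$. The norms decrease by a fixed amount at each stage, so the procedure stops after finitely many steps. This yields \eqref{eq_seq_decomp} and \eqref{eq:energy_decomp}. Orthogonality \eqref{eq:orth} follows by induction. If two parameter sequences stayed comparable, then the rescaling of $v_k^{j}$ at the earlier scale would converge weakly to zero, because the earlier profile was already removed. This contradicts the nonzero weak limit found at the later stage.

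\textbf{Step 4 (positivity).} If $u_k\ge0$, weak and a.e.\ limits of nonnegative rescalings are nonnegative. This applies to $u^{(0)}$. For $j\ge1$, we take the profiles of the rescaled $u_k$ rather than of $v_k$: the rescaled $u^{(0)}$ and the other bubbles tend to zero weakly by orthogonality, so the profiles are also nonnegative. The strong maximum principle (Bony) and Theorem~\ref{thm_JL_bubble} then identify each profile as a Jerison--Lee bubble.

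\textbf{Main obstacle.} I expect the main obstacle to be the proof of the Morrey-improved inequality and its use for localization, together with the bookkeeping of the variable coefficient $a$ under rescaling in the mixed regimes of $(\xi_k,\lambda_k)$. In particular, one must justify the choice of $\tilde e$ when $\lambda_k\to\infty$ and $\xi_k$ is bounded.
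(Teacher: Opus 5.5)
Your proposal is correct and follows the paper's route. The chain is: the Morrey-improved Folland--Stein--Sobolev inequality localizes mass on a ball $B(\xi_k,\lambda_k)$, rescaling by $D_{\xi_k,\lambda_k}^{-1}$ gives a nontrivial profile, Brezis--Lieb splits norm, energy and derivative, and the extraction is iterated with a uniform drop. Your termination via $\|w\|_{S^1}^2\ge \mathcal{S}_Q^{Q/2}\|a\|_{L^\infty}^{-(Q-2)/2}$, the inductive orthogonality argument, and the positivity argument through the rescaled $u_k$ (which the paper omits entirely) are more careful than the paper's versions.

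The normalization mismatch you flag is genuine and cannot be made to match. With $u^{(j)}=(a_\infty^{(j)})^{\frac{Q-2}{4}}w^{(j)}$, items (3), (5) and the Jerison--Lee identification hold, but $u^{(j)}$ then solves the equation with coefficient $1$, so item (2) is really a statement about $w^{(j)}$; the paper's own proof has the same inconsistency.

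Your remark on $\tilde e$ has the cases backwards. When $\lambda_k\to\infty$ with $\xi_k$ bounded, any $\tilde e\neq e$ works. The delicate regime is $\lambda_k\to\infty$ with $\|\xi_k\|\to\infty$: for example, $\xi_k=(\delta_{\lambda_k}\tilde e)^{-1}$ makes the defining formula give $a(e)$ although the true limit is $1$. There $\tilde e$ has to be chosen off a null set, after passing to a subsequence along which $a(\xi_k\circ\delta_{\lambda_k}\,\cdot)\to 1$ a.e.
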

\begin{remark} 
As an immediate consequence of Theorem~\ref{thm_PS_decomp} and 
Theorem~\ref{thm_JL_bubble}, if $\mathcal{E}_{a,f}$ satisfies the 
$(PS)_c$ condition then $c$ cannot take the form
\[
    c = \mathcal{E}_{a,f}(u^{(0)})
    + \sum_{j=1}^m (a_\infty^{(j)})^{-\frac{Q-2}{2}}\mathcal{E}_{1,0}(U)
\]
for any $m\geq 1$, any nonnegative weak solution $u^{(0)}$ to 
\eqref{eqn_pert_Hn}, and any values $a_\infty^{(j)}>0$.
\end{remark}
For a general positive bounded coefficient $a$, provided the nonhomogeneous term $f$ satisfies \hyperref[cond_on_f]{\textbf{(F)}} and a suitable smallness condition, strict convexity of the energy functional around a small neighborhood of the origin guarantee the existence of a primary positive solution to \eqref{eqn_pert_Hn} with negative energy under a smallness condition on the nonhomogeneous term. The principal goal of this article is to establish the existence of a \textit{second} distinct positive solution via mountain-pass methods. To sharply capture this multiplicity phenomenon, we restrict our attention to the constant coefficient case $a\equiv 1$, focusing on the problem:
\begin{equation}\label{eqn_pert_Hn_1}\tag{$\wp_1$}
\begin{cases}
   \mathcal{L}_{\mathbb{H}^n} u = |u|^{2^{\star}-2}u + f(\xi) \quad &\text{in } \mathbb{H}^n, \\
    u >0 \quad   &\text{in } \mathbb{H}^n, \quad  u \in S^1(\mathbb{H}^n).
\end{cases}
\end{equation}

\begin{theorem}\label{thm_multi_pos_sol}
Assume that the nonhomogeneous term $f$ in \eqref{eqn_pert_Hn_1} satisfies \hyperref[cond_on_f]{\textbf{(F)}} and the smallness condition
\begin{align}\label{eq:f_small} 
 \|f\|_{(S^1)'} < C_0 \mathcal{S}_Q^{Q/4}, \qquad \text{where} \quad C_0 = \frac{4}{Q+2}(2^\star-1)^{-\frac{Q-2}{4}}, 
\end{align}
and $\mathcal{S}_Q$ denotes the best Folland--Stein constant given by \eqref{sobolev-const}. Then the problem \eqref{eqn_pert_Hn_1} admits at least two distinct positive solutions.  
\end{theorem}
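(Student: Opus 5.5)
We follow Tarantello's scheme, adapted as in Bhakta--Pucci, working with $\mathcal{E}:=\mathcal{E}_{1,f}$ on $S^1(\hn)$. The first solution is a local minimizer near the origin, and the second comes from a minimization over the upper part of a Nehari-type splitting. Compactness for the second is supplied by Theorem~\ref{thm_PS_decomp} with $a\equiv 1$, and positivity by testing with negative parts.

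\emph{Step 1 (the Nehari splitting and the first solution).} For $u$ with $\|u\|_{L^{2^\star}}=1$, consider the fibering map $t\mapsto \mathcal{E}(tu)$ for $t>0$. The function $t\mapsto t\|\nabla_{\hn}u\|_2^2-t^{2^\star-1}$ attains its maximum at $t_{\max}(u)$. Using $\|\nabla_{\hn} u\|_2^2\ge \mathcal{S}_Q$, this maximum is bounded below by $C_0\mathcal{S}_Q^{Q/4}$; this is exactly the source of the constant $C_0=\frac{4}{Q+2}(2^\star-1)^{-\frac{Q-2}{4}}$. Hence \eqref{eq:f_small} gives $\langle f,u\rangle< t\|\nabla u\|^2-t^{2^\star-1}$ at $t=t_{\max}$. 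So, for each $u$ with $\langle f,u\rangle>0$, there are exactly two critical points $t^-(u)<t_{\max}(u)<t^+(u)$ of the fiber, and the Nehari set splits as $\mathcal N=\mathcal N^-\cup\mathcal N^+$, where $\mathcal N^-$ consists of local minima of fibers and $\mathcal N^+$ of local maxima. The dichotomy is strict: $\mathcal N^0=\emptyset$ by the same inequality. We minimize $\mathcal{E}$ over $\mathcal N^-$ (equivalently, on a small ball) using Ekeland's principle. This gives a bounded PS sequence at a level $c_0<0$. Since $\mathcal{E}_{1,0}(U)=\frac1Q\mathcal{S}_Q^{Q/2}>0$, every term $\mathcal{E}_{1,0}(u^{(j)})\ge \frac1Q\mathcal S_Q^{Q/2}$, and each is weighted by $1$ since $a\equiv1$. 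We will also check that $\mathcal{E}(u^{(0)})\ge c_0$ for the weak limit. Together these exclude $m\ge1$ in Theorem~\ref{thm_PS_decomp} at level $c_0$, so the sequence converges strongly to a minimizer $u_0$. Replacing $u$ by $|u|$ does not increase the energy, since $\langle f,|u|\rangle\ge\langle f,u\rangle$ by (F). Thus $u_0\ge0$ and $u_0\not\equiv0$ because $f\neq0$, and the strong maximum principle for $\mathcal L_{\hn}$ gives $u_0>0$.

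\emph{Step 2 (the second solution).} Set $c_1:=\inf_{\mathcal N^+}\mathcal{E}$. Ekeland's principle on $\mathcal N^+$, together with the nondegeneracy $\mathcal N^0=\emptyset$, produces a PS sequence at level $c_1$ that can be taken nonnegative. By Theorem~\ref{thm_PS_decomp}, compactness holds provided
\[
c_1< c_0+\tfrac1Q\mathcal{S}_Q^{Q/2}.
\]
Here we use that $u^{(0)}$ is then a nonnegative solution, hence has energy at least $c_0$, and that any bubble adds at least $\frac1Q\mathcal S_Q^{Q/2}$. We must also rule out the degenerate possibility that the weak limit coincides with $u_0$ after bubbling. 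This is handled because the level strictly below $c_0+\frac1Q\mathcal S_Q^{Q/2}$ forbids even a single bubble.

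\emph{Step 3 (main obstacle: the strict energy estimate).} To prove the strict inequality, we test with $u_0+tD_{0,\lambda}U$. Using $u_0>0$ and the equation for $u_0$, we expand
\[
\mathcal{E}(u_0+tD_{0,\lambda}U)\le c_0+\mathcal{E}_{1,0}(tD_{0,\lambda}U)-\int_{\hn}\Big[\tfrac{1}{2^\star}\big((u_0+tw)^{2^\star}-u_0^{2^\star}-(tw)^{2^\star}\big)-u_0^{2^\star-1}tw\Big]\,d\xi,
\]
where $w=D_{0,\lambda}U$ and $u_0$ is positive on a ball. The bracketed cross term is nonnegative and is strictly positive with a quantitative lower bound. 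We then show that the ray $t\mapsto u_0+tw$ crosses $\mathcal N^+$, using that $\mathcal N^+$ separates $u_0$ from large multiples; this gives
\[
c_1\le\sup_t \mathcal{E}(u_0+tw)<c_0+\tfrac1Q\mathcal S_Q^{Q/2}.
\]
This last strictness is the delicate step. If the cross term's quantitative bound is hard to obtain on $\hn$, we would instead argue by contradiction: equality would force the supremum to be attained only asymptotically, which contradicts the strict positivity of the interaction term for a fixed $\lambda$. Finally, the limit $u_1\in\mathcal N^+$ differs from $u_0\in\mathcal N^-$, since the two sets are disjoint. The function $u_1$ is nonnegative, and $u_1\not\equiv0$ because $f\ne0$, so the maximum principle gives $u_1>0$.
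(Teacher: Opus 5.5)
Your argument is correct, but it follows Tarantello's original Nehari-splitting scheme rather than the paper's route, which follows Bhakta--Pucci.

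\textbf{Your route.} You split the $f$-dependent Nehari set into the local-minimum branch $\mathcal N^-$ and the local-maximum branch $\mathcal N^+$. You take $u_0$ as the minimizer on $\mathcal N^-$ and $u_1$ as the minimizer on $\mathcal N^+$. Bubbles are excluded by the energy bound $\mathcal E(u^{(0)})\ge c_0$, valid for every critical point by fiber analysis. Distinctness comes for free from $\mathcal N^-\cap\mathcal N^+=\emptyset$.

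\textbf{The paper's route.} The paper uses the truncated functional $J_f$ (with $v_+^{2^\star}$), so every critical point is automatically positive. It partitions $S^1(\mathbb{H}^n)$ by the sign of $\Psi(u)=\|u\|_{S^1}^2-(2^\star-1)\|u\|_{L^{2^\star}}^{2^\star}$, which does not depend on $f$.
\begin{itemize}
\item The first solution $u_0$ minimizes $J_f$ over the open set $\Upsilon_1$, after proving $c_0<c_1=\inf_{\Upsilon}J_f$. Bubbles at level $c_0$ are excluded by a sign argument on $\Psi$.
\item The second solution comes from a mountain pass between $u_0$ and $u_0+\tilde u_{\lambda_0}$. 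Every path crosses $\Upsilon$ simply by continuity of $\Psi$, so the level $\gamma$ lies in $[c_1,\,c_0+\frac1Q\mathcal S_Q^{Q/2})$, and distinctness follows from $\gamma\ge c_1>c_0$.
\end{itemize}

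\textbf{What each buys.} The paper's route avoids the implicit-function machinery yours needs. That machinery is: continuity of $u\mapsto t^{\pm}(u)$, the fact that $\mathcal N^+$ disconnects $S^1(\mathbb{H}^n)$, and converting Ekeland sequences on $\mathcal N^\pm$ into free Palais--Smale sequences. Your route identifies $u_1$ as a constrained minimizer and gives simpler distinctness and bubble-exclusion arguments. The decisive estimate is the same in both: the strict inequality $(a+b)^{2^\star}>a^{2^\star}+b^{2^\star}+2^\star a^{2^\star-1}b$ for $a,b>0$, which is the paper's Claim~2.

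\textbf{Points to tighten.}
\begin{itemize}
\item For $\|u\|_{L^{2^\star}}=1$, the maximum of $t\mapsto t\|\nabla_{\mathbb{H}^n}u\|_2^2-t^{2^\star-1}$ equals $C_0\|\nabla_{\mathbb{H}^n}u\|_2^{(Q+2)/2}$. This is bounded below by $C_0\mathcal S_Q^{Q/4}\|\nabla_{\mathbb{H}^n}u\|_2$, not by $C_0\mathcal S_Q^{Q/4}$. Combining this with $\langle f,u\rangle\le\|f\|_{(S^1)'}\|\nabla_{\mathbb{H}^n}u\|_2$ gives the fiber dichotomy with the uniform gap $\mathcal S_Q^{1/2}\bigl(C_0\mathcal S_Q^{Q/4}-\|f\|_{(S^1)'}\bigr)>0$.
\item It is this uniform gap, not merely $\mathcal N^0=\emptyset$, that yields $|\Psi(u_n)|\ge\delta>0$ along minimizing sequences. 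Only with that bound does Ekeland on $\mathcal N^\pm$ produce free (PS) sequences.
\item Nonnegativity on $\mathcal N^+$ needs a justification. One option: by (F) the map $u\mapsto t^+(|u|)|u|$ does not increase $\mathcal E$, so a nonnegative minimizer exists. A minimizer on $\mathcal N^+$ is then a free critical point, because the Lagrange multiplier argument uses $\langle G'(u),u\rangle=\Psi(u)\neq 0$ there.
\item The strictness in your Step~3 is not delicate, so you need neither a quantitative lower bound nor the contradiction fallback. It is only needed at the crossing time $t_*>0$ where $u_0+t_*w\in\mathcal N^+$. There the interaction integrand is positive a.e., since $u_0>0$ and $w>0$. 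Hence $c_1\le\mathcal E(u_0+t_*w)<c_0+\mathcal E_{1,0}(t_*w)\le c_0+\frac1Q\mathcal S_Q^{Q/2}$.
\end{itemize}
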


\begin{remark}
A natural question is whether our main results extend to fractional sub-Laplacians or more general homogeneous Lie groups. While our main results are formulated for the local sub-Laplacian on the Heisenberg group, the underlying structural approach for the profile decomposition can naturally be extended to fractional sub-Laplacians and broader classes of homogeneous Lie groups. We restrict our focus to the present setting because the precise energy quantization relies fundamentally on the explicit classification of critical extremals (the Jerison--Lee bubbles) and the sharp Sobolev constant, which to the best of our knowledge remain unknown in those more general frameworks. We discuss this in more detail in Remark~\ref{rem_why_not_fractional} below.
\end{remark}
\subsection*{Strategy of proof and key contributions}
The non-commutativity of the Heisenberg group, combined with the critical nature of the Folland--Stein exponent, introduces severe analytical challenges that differ substantially from the classical Euclidean theory. We overcome these obstacles through the following key technical contributions.

\smallskip
\noindent\textbf{1. Improved Folland--Stein--Sobolev inequality.} 
A central difficulty in critical problems is detecting the concentration of Palais--Smale sequences. In the Euclidean setting, this is typically handled via Lions' concentration-compactness principle \cite{Lio84a, Lio84b}. On the Heisenberg group, we develop an alternative mechanism by establishing a new improved Folland--Stein--Sobolev inequality of Morrey type. Specifically, we first prove a novel embedding of Morrey spaces into Besov spaces on $\mathbb{H}^n$:
\begin{equation}\label{Besov-Morrey-embedd}
    \mathcal{M}^{1,\alpha}(\mathbb{H}^n) \hookrightarrow \dot{B}^{-\alpha}_{\infty,\infty}(\mathbb{H}^n) \quad\text{for } 0<\alpha<Q,
\end{equation}
(see Lemma~\ref{lem_Mor<Bes}). By coupling \eqref{Besov-Morrey-embedd} with Chamorro's improved Sobolev inequality in Besov spaces \cite{C11}, we establish the following theorem, which serves as the subelliptic counterpart to the Euclidean results of Palatucci and Pisante \cite{PP14}.

\begin{theorem}\label{thm_hardy_inter}
Let $1\leq p<Q$ and $p^*=Qp/(Q-p)$. There exists $C=C(Q,p)>0$ such that for any $p/p^*\leq\theta<1$ and any $1\leq r<p^*$,
\begin{equation}\label{eq_impro_sobolev}
    \|u\|_{L^{p^*}(\mathbb{H}^n)} \leq C\|\nabla_{\mathbb{H}^n}u\|_{L^p(\mathbb{H}^n)}^\theta \|u\|_{\mathcal{M}^{r,r(Q-p)/p}(\mathbb{H}^n)}^{1-\theta} \quad\text{for all } u\in  S^{1,p}(\mathbb{H}^n).
\end{equation}
\end{theorem}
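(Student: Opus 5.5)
The plan is to combine two ingredients, exactly as announced before the statement. The first is Chamorro's improved Sobolev inequality on stratified groups \cite{C11}: for $1\le p<Q$ and $p/p^*\le\theta<1$,
\begin{equation*}
\|u\|_{L^{p^*}(\mathbb{H}^n)}\le C\,\|\nabla_{\mathbb{H}^n}u\|_{L^p(\mathbb{H}^n)}^{\theta}\,\|u\|_{\dot B^{-\beta}_{\infty,\infty}(\mathbb{H}^n)}^{1-\theta},\qquad \beta=\frac{\theta}{1-\theta}\cdot\frac{Q-p}{p}\cdot\frac{p}{Q}\cdot\frac{Q}{1}\ \text{(the exponent forced by scaling)}.
\end{equation*}
Chamorro's result (proved with the heat semigroup $e^{s\mathcal{L}_{\mathbb{H}^n}}$ and a Ledoux-type argument) is typically stated with the fixed regularity index $\beta=Q/p^*=(Q-p)/p$. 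I will use it in that form and rely on the parameter $\theta$ only through the trivial range reduction described below.

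The second ingredient is the Morrey--Besov embedding \eqref{Besov-Morrey-embedd}, Lemma~\ref{lem_Mor<Bes}, applied with $\alpha=(Q-p)/p\in(0,Q)$. Recall that $\|u\|_{\mathcal{M}^{1,\alpha}}=\sup_{R,\xi}R^{\alpha-Q}\int_{B(\xi,R)}|u|$. I read $\dot B^{-\alpha}_{\infty,\infty}$ through the heat kernel, $\sup_{s>0}s^{\alpha/2}\|e^{s\mathcal{L}}u\|_\infty$. The embedding itself then comes from Gaussian bounds on the heat kernel $h_s$ on $\mathbb{H}^n$: split $\int h_s(\eta^{-1}\xi)|u(\eta)|d\eta$ over dyadic Carnot--Carath\'eodory annuli of radius $2^k\sqrt s$ around $\xi$, and each annulus contributes $s^{-Q/2}e^{-c4^k}(2^k\sqrt s)^{Q-\alpha}\|u\|_{\mathcal{M}^{1,\alpha}}$; summing over $k$ gives $s^{-\alpha/2}\|u\|_{\mathcal M^{1,\alpha}}$. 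So $\|u\|_{\dot B^{-(Q-p)/p}_{\infty,\infty}}\le C\|u\|_{\mathcal{M}^{1,(Q-p)/p}}$.

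Next I pass from $\mathcal{M}^{1,\cdot}$ to $\mathcal{M}^{r,r(Q-p)/p}$. With the normalization $\|u\|_{\mathcal{M}^{r,\gamma}}=\sup R^{(\gamma-Q)/r}\|u\|_{L^r(B(\xi,R))}$, H\"older on balls gives $R^{\alpha-Q}\int_{B_R}|u|\le C R^{\alpha-Q}R^{Q(1-1/r)}\|u\|_{L^r(B_R)}=C R^{(r\alpha-Q)/r}\|u\|_{L^r(B_R)}$, using that ball volume is $cR^Q$. Hence $\mathcal{M}^{r,r\alpha}\hookrightarrow\mathcal{M}^{1,\alpha}$ for every $r\ge1$, and chaining the three bounds yields \eqref{eq_impro_sobolev} for $\theta=p/p^*$. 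For larger $\theta\in[p/p^*,1)$ I interpolate trivially: $\|u\|_{\mathcal{M}^{r,r(Q-p)/p}}\le C\|u\|_{L^{p^*}}$ by H\"older, because the exponent $(r(Q-p)/p-Q)/r+Q/r-Q/p^*=0$, so the Morrey norm is scale-consistent. Writing $\|u\|_{\mathcal M}^{1-p/p^*}=\|u\|_{\mathcal M}^{1-\theta}\|u\|_{\mathcal M}^{\theta-p/p^*}$ and bounding the last factor by $C\|u\|_{L^{p^*}}^{\theta-p/p^*}\le C\|\nabla u\|_p^{\theta-p/p^*}$ (Folland--Stein) gives the general case. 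The restriction $r<p^*$ is not actually needed for these steps; it only keeps the Morrey space nontrivial.

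The main obstacle is making sure Chamorro's inequality holds in the exact form needed. That means the Besov norm must be defined with the heat semigroup, the regularity index must be $-(Q-p)/p$, and $p=1$ must be allowed. The $p=1$ case uses a BV/weak-type argument in \cite{C11}, which I would quote as is. If $p=1$ is not covered there, I would prove it via the Ledoux truncation argument: apply the $p$-independent level-set estimate to $|u|$, then sum over the dyadic truncations $\min\{(|u|-2^j)_+,2^j\}$.
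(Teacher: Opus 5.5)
Your proposal is correct and follows essentially the paper's route. You use Chamorro's inequality at the endpoint $q=p^*$ with index $-(Q-p)/p$, the heat-kernel dyadic proof of $\mathcal{M}^{1,\alpha}\hookrightarrow\dot B^{-\alpha}_{\infty,\infty}$ with $\alpha=(Q-p)/p$, and H\"older to pass to $\mathcal{M}^{r,r(Q-p)/p}$; then Folland--Stein together with $L^{p^*}\hookrightarrow\mathcal{M}^{r,r(Q-p)/p}$ covers $\theta>p/p^*$, which you write out more explicitly than the paper's one-word ``interpolating''.

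The remaining slips are cosmetic. Your displayed formula for $\beta$ simplifies to $\theta(Q-p)/(1-\theta)$ and is wrong; scaling forces $\beta=(Q-p)/p$ for every $\theta$. Also, contrary to your aside, the H\"older bound $\|u\|_{\mathcal{M}^{r,r(Q-p)/p}}\le C\|u\|_{L^{p^*}}$ does use $r\le p^*$. Neither affects the argument, since you only invoke Chamorro with the fixed index $(Q-p)/p$ and the theorem assumes $r<p^*$.
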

For $p=2$, inequality \eqref{eq_impro_sobolev} becomes our primary tool for detecting concentration. If a bounded sequence in $S^1(\mathbb{H}^n)$ does not vanish in $L^{2^{\star}}(\mathbb{H}^n)$, \eqref{eq_impro_sobolev} forces its Morrey norm to be bounded strictly away from zero. This localizes the concentration to a specific Heisenberg ball, allowing us to extract a nonzero weak profile via appropriate left-translations and non-isotropic dilations.

\smallskip
\noindent\textbf{2. Palais--Smale profile decomposition on $\mathbb{H}^n$.} 
By iteratively applying the concentration detection mechanism above, we establish the full Palais--Smale profile decomposition (Theorem~\ref{thm_PS_decomp}). While profile decompositions have a rich Euclidean history, from Struwe \cite{Str84} and Solimini \cite{Sol95} to wavelet approaches by Gérard \cite{Ger98} and Jaffard \cite{Jaf99}-our framework extends the recent homogeneous $\mathbb{H}^n$ compactness results of \cite{PPT25} to the \emph{inhomogeneous, variable-coefficient} setting. This generalization requires surmounting several Heisenberg-specific hurdles:
\begin{itemize}
    \item \textit{Asymptotic Orthogonality \& Non-commutativity:} The non-commutative group law and non-isotropic dilations $\delta_\lambda$ require highly delicate parameter tracking. We formulate Lemma~\ref{pp-lemma-3} as the precise $\mathbb{H}^n$ analogue of G\'erard's characterization \cite{Ger98}, determining exactly when two sequences of dilated translations become asymptotically orthogonal.
    \item \textit{Variable Coefficients:} The weighted nonlinearity $a(\xi)|u|^{2^{\star}-2}u$ introduces local scaling factors $(a_\infty^{(j)})^{-(Q-2)/2}$ during the energy decoupling. This requires a careful passage to the limit along bubble sequences under the weak assumption \hyperref[assum_main]{\textbf{(A)}}.
    \item \textit{Subelliptic Regularity:} Because $\mathcal{L}_{\mathbb{H}^n}$ is not uniformly elliptic, upgrading weak nonnegative profiles to strictly positive solutions demands tailored subelliptic regularity theory \cite{FS74} and the weak Harnack inequality \cite{BLU07}.
\end{itemize}

\smallskip
\noindent\textbf{3. Multiplicity of positive solutions.}
We finally apply this decomposition to capture multiple positive solutions for \eqref{eqn_pert_Hn_1}. The first solution is a local minimizer of $\mathcal{E}_{1,f}$ with negative energy, obtained via strict convexity under the smallness condition on $\|f\|_{(S^1)'}$ (cf.\ \cite{Tar92},  \cite[Theorem 1.1]{BP20}). The second solution is sought via the mountain-pass theorem, yielding a $(PS)_{c^*}$ sequence at a critical level $c^*>0$. 

By our profile decomposition, the energy of this sequence decouples into the energy of a weak solution plus the energy of $m$ Jerison--Lee bubbles:
\[
    c^* = \mathcal{E}_{1,f}(u^{(0)}) + m\cdot\mathcal{E}_{1,0}(U) = \mathcal{E}_{1,f}(u^{(0)}) + \frac{m}{Q}\mathcal{S}_Q^{Q/2}.
\]
If even a single bubble were to form ($m\geq 1$), the mountain-pass energy would be forced above the threshold $\mathcal{E}_{1,f}(u^{(0)}) + \frac{1}{Q}\mathcal{S}_Q^{Q/2}$. However, our strict smallness condition \eqref{eq:f_small} guarantees that $c^*$ lies strictly below this threshold. This rigidly forces $m=0$, proving that no bubbles can form. Consequently, the $(PS)_{c^*}$ sequence is perfectly compact, converging strongly to the second distinct positive solution.

\subsection*{Structure of the article} 
In Section~\ref{prelim}, we introduce the essential background and preliminary framework. Section~\ref{improved sobolev} is devoted to establishing the improved Folland--Stein--Sobolev embedding involving Morrey norms (Theorem~\ref{thm_hardy_inter}) via a dyadic decomposition and several structural lemmas. In Section~\ref{ps section}, after characterizing the asymptotic orthogonality between sequences of non-isotropic dilations and left-translations, we establish the Palais--Smale profile decomposition, thereby proving Theorem~\ref{thm_PS_decomp}. Finally, in Section~\ref{mul sol cry}, we complete the proof of Theorem~\ref{thm_multi_pos_sol}.

\section{Preliminaries}\label{prelim}
Let $\mathbb{H}^{n}= \mathbb{C}^n\times \mathbb{R} \simeq \mathbb{R}^{2n+1}$ be the $(2n+1)$-dimensional Heisenberg group. Any element $\xi\in\mathbb{H}^n$ can be denoted by
\[
    \xi = (z,t) = (x+iy, t) \simeq (x_1,\ldots,x_n, y_1,\ldots,y_n, t).
\]
For $\xi=(x,y,t)$ and $\eta=(x',y',s)\in\mathbb{H}^n$, the non-commutative group law is given by
\[
    \xi\circ\eta
    =\!\left(x+x',\;y+y',\;t+s+\tfrac{1}{2}
    \sum_{i=1}^n(y_ix_i'-x_iy_i')\right).
\]
The inverse of an element $\xi=(z,t)$ is given by $\xi^{-1}=(-z,-t)$, and the identity element is $e=(0,0)$. For any $\eta \in \mathbb{H}^n$, the left-translation operator is defined as $\tau_{\eta}(\xi) := \eta^{-1}\circ\xi$ for all $\xi\in\mathbb{H}^n$.

The Heisenberg Lie algebra $\mathfrak{h}^n$ is spanned by the $2n+1$ left-invariant vector fields
\begin{align*}
    X_j &= \frac{\partial}{\partial x_j}+\frac{1}{2}y_j\frac{\partial}{\partial t},
    \quad j=1,\ldots,n,\\
    X_j &= \frac{\partial}{\partial y_{j-n}}-\frac{1}{2}x_{j-n}\frac{\partial}{\partial t},
    \quad j=n+1,\ldots,2n,
\end{align*}
and $T=\frac{\partial}{\partial t}$. The sub-Laplacian on the Heisenberg group is the second-order differential operator $\mathcal{L}_{\mathbb{H}^n}=-\sum_{j=1}^{2n}X_j^2$, which takes the explicit form
\begin{align}\label{eq_def_sub_lap}
    \mathcal{L}_{\mathbb{H}^n} = -\Delta_{\mathbb{C}^n}
    -\frac{1}{4}|z|^2\frac{\partial^2}{\partial t^2}
    +\sum_{j=1}^n\!\left(x_j\frac{\partial}{\partial y_j}
    -y_j\frac{\partial}{\partial x_j}\right)\frac{\partial}{\partial t}.
\end{align}
We denote the horizontal gradient by $\nabla_{\mathbb{H}^n}=(X_1,\ldots,X_{2n})$.

The homogeneous (Kor{\'a}nyi) norm on $\mathbb{H}^n$ is defined as
\[
    \|\xi\|=\|(z,t)\|=\bigl(|z|^4+t^2\bigr)^{1/4},
\]
and the associated distance is $d(\xi,\eta)=\|\xi^{-1}\circ\eta\|$. The non-isotropic dilation $\delta_{\lambda} \colon\mathbb{H}^n\to\mathbb{H}^n$ for $\lambda>0$ is defined by $\delta_{\lambda}(z,t)=(\lambda z,\lambda^2t)$ which satisfies $\|\delta_{\lambda}(\xi)\| = \lambda\|\xi\|$. The Haar measure on $\mathbb{H}^n$ coincides with the standard $(2n+1)$-dimensional Lebesgue measure. For $\xi \in \hn$, we write that $B(\xi, r)$ is a ball on $\hn$ centered at $\xi$ and radius $r>0$ and given by $B(\xi, r)=\{\eta \in \hn : d(\eta, \xi)<r\}$. Moreover, the Lebesgue measure of the ball $B(\xi, r)$ satisfies $|B(\xi, r)|=r^{Q}|B(\xi,1)|$, here $Q=2n+2$ is known as the homogeneous dimension of $\hn$.

For $f,g\in L^1(\mathbb{H}^n)$, the group convolution is defined by
\[
    (f*g)(\xi)
    =\int_{\mathbb{H}^n}f(\xi\circ\eta^{-1})\,g(\eta)\,d\eta.
\]

The operator $\mathcal{L}_{\mathbb{H}^n}$ generates a heat contraction semigroup $\{e^{-t\mathcal{L}_{\mathbb{H}^n}}\}_{t>0}$, which acts via convolution with an explicit heat kernel:
\[ 
    e^{-t\mathcal{L}_{\mathbb{H}^n}}f(w,s) = (f \ast p_t)(w,s), 
\]
where 
\begin{equation*} 
    p_t(w,s) =  \frac{2^{-n}}{(2\pi)^{n+1}}  \int_{-\infty}^\infty e^{-i\lambda s}\left( \frac{\lambda}{\sinh(t\lambda)}\right)^n e^{-\frac{1}{4}\lambda (\coth(t\lambda))|w|^2} d\lambda. 
\end{equation*}
It is straightforward to verify that for all $t, \lambda>0$ and $\xi \in \mathbb{H}^n$, the kernel satisfies the scaling property
\begin{align*}
    p_t(\delta_{\lambda}(\xi)) = \lambda^{-Q} p_{\frac{t}{\lambda^2}}(\xi).
\end{align*}
Furthermore, the kernel $p_t$ satisfies the following conservative property together with the standard Gaussian-type estimates (see \cite[pages~84-85]{Thanbook} for instance)
\begin{align}\label{heat-kernel-prop}
    \int_{\mathbb{H}^n} p_t(\xi) \,d\xi = 1 \qquad \text{and} \qquad |p_{t}(\xi)| \leq C_0 t^{-Q/2} e^{-C_1\frac{\|\xi\|^2}{t}},
\end{align}
for some universal positive constants $C_0$ and $C_1$. 

Using the horizontal vector fields $X_j$, we define the homogeneous Folland--Stein--Sobolev space $S^{1,p}(\mathbb{H}^n)$ as the completion of $C_c^\infty(\mathbb{H}^n)$ with respect to the norm
\begin{align}\label{hom-sobl}
    \| u\|_{S^{1,p}(\mathbb{H}^n)} :=  \left(\int_{\mathbb{H}^n}|\nabla_{\mathbb{H}^n} u(\xi )|^p\, d\xi \right)^{\frac1p}.
\end{align}
In particular, for $p=2$, the space $S^{1,2}(\mathbb{H}^n)$ is a Hilbert space equipped with the inner product
\begin{align*}
    \langle u,v\rangle_{S^{1,2}(\mathbb{H}^n)} = \int_{\mathbb{H}^n} \nabla_{\mathbb{H}^n} u(\xi ) \cdot \nabla_{\mathbb{H}^n} v(\xi ) \, d\xi \qquad \text{for all } u,\, v \in S^{1,2}(\mathbb{H}^n).
\end{align*}

Next, we introduce two important function spaces that play a crucial role in the proofs of Theorems~\ref{thm_PS_decomp} and \ref{thm_hardy_inter}.
\begin{definition}
For $\alpha>0$, the homogeneous Besov space $\dot{B}^{-\alpha}_{\infty, \infty}(\mathbb{H}^n)$ is defined as the space of tempered distributions $f \in \mathcal{S}'(\mathbb{H}^n)$ such that
\begin{align*}
    \|f\|_{\dot{B}^{-\alpha}_{\infty, \infty}(\mathbb{H}^n)} = \sup_{t>0} \, t^{\alpha/2} \|e^{-t\mathcal{L}_{\mathbb{H}^n}}f\|_{L^{\infty}(\mathbb{H}^n)} < \infty.
\end{align*} 
\end{definition}

\begin{definition}\label{def_mor}
   Let $\gamma \in [0, Q]$ and $q \in [1, \infty)$. A measurable function $f$ is said to belong to the Morrey space $\mathcal{M}^{q,\gamma}(\mathbb{H}^n)$ if 
   \begin{align*}
       \|f\|_{\mathcal{M}^{q,\gamma}(\mathbb{H}^n)}^{q} := \sup_{\substack{R>0\\ \xi \in \mathbb{H}^n}}  \frac{R^{\gamma}}{|B(\xi,R)|} \int_{B(\xi, R)} |f(\eta)|^{q} d\eta < \infty.
   \end{align*}
   \end{definition}
   
  The parameter $\gamma$ interpolates between the Lebesgue and essentially bounded regimes. Indeed, when $\gamma=Q$, $\mathcal{M}^{q,Q}(\mathbb{H}^n)$ coincides with $L^q(\mathbb{H}^n)$, while for $\gamma=0$, $\mathcal{M}^{q,0}(\mathbb{H}^n)$ coincides with $L^\infty(\mathbb{H}^n)$. The Morrey spaces introduced above will be used to establish an embedding into negative-order Besov spaces in the next section.

\section{An improved Folland--Stein--Sobolev inequality}\label{improved sobolev}
In this section, we prove the improved interpolated version of the Folland--Stein--Sobolev inequality, i.e., Theorem~\ref{thm_hardy_inter}. To prove the theorem, we first recall an improved Sobolev inequality in terms of Besov norms studied in \cite[Theorem~1]{C11}.
\begin{theorem}[\cite{C11}]\label{th-chamar}
    For every $1\leq p<q<\infty$ and every function $u\in S^{1,p}(\mathbb{H}^n)$, there holds
    \begin{align}\label{dn-l-1}
        \|u\|_{L^q(\mathbb{H}^n)}\leq C \|\nabla_{\mathbb{H}^n} u \|_{L^p(\mathbb{H}^n)}^\theta\|u\|_{\dot{B}_{\infty,\infty}^{\theta/(\theta-1)}(\mathbb{H}^n)}^{1-\theta}\, ,
    \end{align}
    where $\theta=p/q$ and the positive constant $C$ depends only on $p$, $q$, and $n$.
\end{theorem}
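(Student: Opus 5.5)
Since the estimate is quoted from \cite{C11}, we only outline how we would prove it. We follow Ledoux's semigroup approach to improved Sobolev inequalities, adapted to $\mathbb{H}^n$ through the heat semigroup $e^{-t\mathcal{L}_{\mathbb{H}^n}}$ and the kernel bounds \eqref{heat-kernel-prop}.

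\textbf{Scaling and normalization.} Set $\alpha=\theta/(1-\theta)$, so that the Besov index $\theta/(\theta-1)$ equals $-\alpha$. We first check that \eqref{dn-l-1} is invariant under $u\mapsto u\circ\delta_\lambda$. Under this map:
\begin{itemize}
\item $\|u\|_{L^q}$ scales by $\lambda^{-Q/q}$;
\item $\|\nabla_{\mathbb{H}^n}u\|_{L^p}$ scales by $\lambda^{1-Q/p}$;
\item $\|u\|_{\dot B^{-\alpha}_{\infty,\infty}}$ scales by $\lambda^{-\alpha}$, using $p_t(\delta_\lambda\xi)=\lambda^{-Q}p_{t/\lambda^2}(\xi)$.
\end{itemize}
Since $\theta=p/q$, we have $\theta(1-Q/p)-\alpha(1-\theta)=-Q/q$, so the exponents match. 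By this scaling and homogeneity we may therefore assume $\|u\|_{\dot B^{-\alpha}_{\infty,\infty}}\le 1$.

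\textbf{Key semigroup estimate.} We first prove
\[
\|u-e^{-t\mathcal{L}_{\mathbb{H}^n}}u\|_{L^p}\le C\sqrt t\,\|\nabla_{\mathbb{H}^n}u\|_{L^p}.
\]
Write $u-e^{-t\mathcal L}u=\int_0^t \mathcal L e^{-s\mathcal L}u\,ds$ and $\mathcal L=\sum_j X_j^*X_j$. Moving one horizontal derivative onto the kernel then reduces the claim to the bound $\|\nabla_{\mathbb{H}^n} p_s\|_{L^1}\le Cs^{-1/2}$. This bound follows from the Gaussian estimates for the horizontal derivatives of $p_s$ together with the dilation scaling of $p_s$. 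Integrating in $s$ yields the factor $\sqrt t$.

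\textbf{Layer-cake argument.} Write
\[
\|u\|_q^q=q\int_0^\infty \lambda^{q-1}\,|\{|u|>\lambda\}|\,d\lambda .
\]
For each level $\lambda$ choose $t_\lambda$ with $t_\lambda^{-\alpha/2}=\lambda/2$. The Besov bound then gives $|e^{-t_\lambda\mathcal L}u|\le\lambda/2$, hence
\[
\{|u|>\lambda\}\subset\{|u-e^{-t_\lambda\mathcal L}u|>\lambda/2\}.
\]
Applying Chebyshev and the key semigroup estimate directly would give $\int\lambda^{q-1-p}t_\lambda^{p/2}\,d\lambda$ times $\|\nabla u\|_p^p$, which diverges.

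\textbf{Truncation.} Following Ledoux, we replace $u$ at level $\lambda$ by a truncation $u_\lambda$ living between $\lambda/c$ and $c\lambda$ (say $c=10$). We apply the previous step to $u_\lambda$, noting that $\|u_\lambda\|_{\dot B^{-\alpha}_{\infty,\infty}}$ is controlled by that of $u$ plus an error handled by the same threshold choice. This yields
\[
|\{|u|>\lambda\}|\lesssim\lambda^{-p}t_\lambda^{p/2}\int_{\{\lambda/c<|u|\le c\lambda\}}|\nabla_{\mathbb{H}^n}u|^p .
\]
Inserting $t_\lambda\sim\lambda^{-2/\alpha}$ and exchanging the order of integration, the exponent bookkeeping $q-p-p/\alpha=0$ makes the $\lambda$-integral over $[|u|/c,\,c|u|]$ a bounded logarithmic factor. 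Hence $\|u\|_q^q\lesssim\|\nabla u\|_p^p$, which is \eqref{dn-l-1} after undoing the normalization.

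\textbf{Main obstacle.} The delicate step is the truncation. We must control the Besov norm of the truncated pieces, since truncation is not linear with respect to $e^{-t\mathcal L}$. In addition, for $p=1$ the Chebyshev step must be replaced by the coarea/weak-type version of the key semigroup estimate.
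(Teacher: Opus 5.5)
The paper gives no proof of this statement; it simply imports it from \cite{C11}. Your plan (Ledoux's semigroup scheme) is a reasonable route, and its preliminary steps are fine. The exponent bookkeeping for the $\delta_\lambda$-invariance is correct. The pseudo-Poincar\'e bound $\|u-e^{-t\mathcal{L}_{\hn}}u\|_{L^p}\le C\sqrt t\,\|\nabla_{\hn}u\|_{L^p}$ holds. On $\hn$ the $X_j$ do not commute with $e^{-s\mathcal{L}_{\hn}}$, so the derivative that lands on $p_s$ is right-invariant; this is harmless because $p_s(\xi^{-1})=p_s(\xi)$. Nothing special happens at $p=1$ either: Chebyshev in $L^1$ works verbatim, so no coarea argument is needed.

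\textbf{The gap: the truncation step.} You flag this step but do not carry it out, and the mechanism you propose for it is not valid, because negative-order Besov norms are not stable under truncation. For instance, take $\varphi\in C_c^\infty(\hn)$ and $0<\alpha<Q$. Then $\|\sin(Nx_1)\varphi\|_{\dot B^{-\alpha}_{\infty,\infty}}\to 0$ as $N\to\infty$. A nonnegative band truncation of the same function, however, has a nonzero weak limit, so its $\dot B^{-\alpha}_{\infty,\infty}$ norm stays bounded below.

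What the argument actually needs is only a pointwise bound on $e^{-t_\lambda\mathcal{L}_{\hn}}u_\lambda$ at the single time $t_\lambda$. If $u\ge 0$ this is immediate: $0\le u_\lambda\le u$ and $p_t\ge0$ give $e^{-t_\lambda\mathcal{L}_{\hn}}u_\lambda\le\lambda/2$, and your argument then closes. For signed $u$ it does not, and this is the case needed in Section~\ref{ps section}, where the $\sigma_k$ change sign. Write $u=h+u_\lambda+g$ with $|h|\le\lambda/c$ and $g=\operatorname{sign}(u)(|u|-c\lambda)_+$. Then you only get
$|e^{-t_\lambda\mathcal{L}_{\hn}}u_\lambda|\le\lambda/2+\lambda/c+|e^{-t_\lambda\mathcal{L}_{\hn}}g|$.
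The last term is not controlled by $\|u\|_{\dot B^{-\alpha}_{\infty,\infty}}$: large positive and negative values of $u$ can cancel in $e^{-t_\lambda\mathcal{L}_{\hn}}u$ but not in $e^{-t_\lambda\mathcal{L}_{\hn}}g$. So your displayed estimate for $|\{|u|>\lambda\}|$, with a fixed band and no error term, is unproved.

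\textbf{Repair by absorption.} One way to close the gap is to keep the high part as an error term and absorb it. Choose $t_k$ with $\|e^{-t_k\mathcal{L}_{\hn}}u\|_{L^\infty}\le 2^k$, and set
$u_k=\operatorname{sign}(u)\min\{(|u|-2^k)_+,\,2^{k+N}-2^k\}$ and $g_k=\operatorname{sign}(u)(|u|-2^{k+N})_+$.
On $\{2^{k+2}\le|u|<2^{k+3}\}$ one has $|u_k-e^{-t_k\mathcal{L}_{\hn}}u_k|\ge 2^k-|e^{-t_k\mathcal{L}_{\hn}}g_k|$. Chebyshev and the $L^1$-contractivity of the heat semigroup then give
\[
|\{2^{k+2}\le|u|<2^{k+3}\}|\le C\,2^{-pk}t_k^{p/2}\|\nabla_{\hn}u_k\|_{L^p}^p+2^{1-k}\|g_k\|_{L^1}.
\]
Multiply by $2^{qk}$ and sum over $k$:
\begin{enumerate}
\item The first terms give at most $CN\|\nabla_{\hn}u\|_{L^p}^p$, since each point lies in at most $N$ bands and $q-p-p/\alpha=0$.
\item The second terms give at most $C2^{-N(q-1)}\|u\|_{L^q}^q$, which is absorbed for $N$ large.
\end{enumerate}
Do this first for $u\in C_c^\infty(\hn)$, then pass to general $u$ by density. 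For $q=p^*$, the case used in the paper, the density step is legitimate because $L^{p^*}(\hn)\hookrightarrow\dot B^{-(Q-p)/p}_{\infty,\infty}(\hn)$.

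\textbf{Repair for $p>1$ without truncation.} The paper only uses $p=2$, and for $p>1$ truncation can be avoided altogether. The heat kernel bounds give, pointwise,
$|u-e^{-t\mathcal{L}_{\hn}}u|\le C\sqrt t\,M(|\nabla_{\hn}u|)$, with $M$ the Hardy--Littlewood maximal function on $\hn$. Optimizing over $t$ in $|u|\le C\sqrt t\,M(|\nabla_{\hn}u|)+t^{-\alpha/2}\|u\|_{\dot B^{-\alpha}_{\infty,\infty}}$ yields
$|u|\le C\,M(|\nabla_{\hn}u|)^{\theta}\|u\|_{\dot B^{-\alpha}_{\infty,\infty}}^{1-\theta}$ pointwise. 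The maximal theorem then finishes the proof.
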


We next state two technical lemmas establishing basic continuous embeddings for Morrey spaces. Their proofs follow directly from the definition of the Morrey norm and the application of H\"older's inequality over Heisenberg balls. We omit the proof for the sake of brevity.
\begin{lemma}
  For any $\gamma \in (0, Q)$ and $q \in (1, \infty)$, there exists a positive constant $C$ depending on $Q$, $q$, and $\gamma$ such that for all $u\in \mathcal{M}^{q,\gamma}(\mathbb{H}^n)$, there holds
  \begin{align}\label{dn-l-2}
      \|u\|_{\mathcal{M}^{1,\gamma/q}(\mathbb{H}^n)}\leq C \|u\|_{\mathcal{M}^{q,\gamma}(\mathbb{H}^n)}.
  \end{align}
\end{lemma}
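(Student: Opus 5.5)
The plan is to estimate the $\mathcal{M}^{1,\gamma/q}$ quantity ball by ball and control each ball average by H\"older's inequality, so that the result follows with an explicit constant. Fix $\xi\in\mathbb{H}^n$ and $R>0$, and write $B=B(\xi,R)$. By Definition~\ref{def_mor} with exponent $1$ and parameter $\gamma/q$, we need to bound
\[
\frac{R^{\gamma/q}}{|B|}\int_{B}|u(\eta)|\,d\eta
\]
uniformly in $(\xi,R)$ by $C\|u\|_{\mathcal{M}^{q,\gamma}(\mathbb{H}^n)}$. Note that the left side of \eqref{dn-l-2} is defined with power $1$, since the norm is $\big(\sup\cdots\big)^{1/1}$.

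First apply H\"older's inequality on $B$ with exponents $q$ and $q'=q/(q-1)$:
\[
\frac{1}{|B|}\int_B|u|\,d\eta\le \frac{1}{|B|}\,|B|^{1-1/q}\Big(\int_B|u|^q\,d\eta\Big)^{1/q}=\Big(\frac{1}{|B|}\int_B|u|^q\,d\eta\Big)^{1/q}.
\]
Multiplying by $R^{\gamma/q}=(R^{\gamma})^{1/q}$ gives
\[
\frac{R^{\gamma/q}}{|B|}\int_B|u|\,d\eta\le\Big(\frac{R^{\gamma}}{|B|}\int_B|u|^q\,d\eta\Big)^{1/q}\le \|u\|_{\mathcal{M}^{q,\gamma}(\mathbb{H}^n)}.
\]
Taking the supremum over $\xi\in\mathbb{H}^n$ and $R>0$ then yields \eqref{dn-l-2} with $C=1$. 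In particular, the scaling $|B(\xi,R)|=R^Q|B(e,1)|$ is not even needed, because the normalization $1/|B|$ in both Morrey quantities cancels exactly against the H\"older factor $|B|^{1-1/q}$.

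There is no real obstacle here. The only point to check is the normalization convention: the Morrey norm of Definition~\ref{def_mor} is defined through its $q$-th power, so one must take the $1/q$ root consistently, and the parameter $\gamma/q$ indeed lies in $(0,Q)$. Measurability and finiteness follow directly from the hypothesis $u\in\mathcal{M}^{q,\gamma}(\mathbb{H}^n)$.
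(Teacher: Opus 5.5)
Your proof is correct and takes the same route the paper indicates: the paper omits the proof, saying only that it follows from the definition of the Morrey norm and H\"older's inequality on Heisenberg balls. Your H\"older estimate on each ball, with the $1/q$-th root taken consistently, gives the inequality with $C=1$, because the normalization by $|B(\xi,R)|$ in both Morrey quantities cancels exactly.
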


\begin{lemma}\label{morey-sblv}
Let $1\leq p<Q$. Then for any $1\leq r < p^*= Qp /(Q-p)$, there holds the continuous embedding
    \begin{align*}
    L^{p^*}(\mathbb{H}^n)\hookrightarrow \mathcal{M}^{r, r(Q-p)/p}(\mathbb{H}^n).
\end{align*}
\end{lemma}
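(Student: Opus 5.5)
We must prove Lemma \ref{morey-sblv}: $L^{p^*}(\mathbb{H}^n)\hookrightarrow \mathcal{M}^{r,r(Q-p)/p}(\mathbb{H}^n)$ for $1\le r<p^*$. The plan is a direct application of H\"older's inequality on a single Heisenberg ball, combined with the homogeneity $|B(\xi,R)|=R^Q|B(e,1)|$. No covering or interpolation argument should be needed, since the exponent $\gamma=r(Q-p)/p$ is exactly the one that makes the Morrey quantity scale-invariant under the dilations $\delta_\lambda$, in the same way as the $L^{p^*}$ norm.

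Fix $\xi\in\mathbb{H}^n$, $R>0$ and $u\in L^{p^*}(\mathbb{H}^n)$. Since $r<p^*$, apply H\"older's inequality with exponents $p^*/r$ and its conjugate $p^*/(p^*-r)$:
\begin{align*}
\int_{B(\xi,R)}|u|^r\,d\eta\le \left(\int_{B(\xi,R)}|u|^{p^*}d\eta\right)^{r/p^*}|B(\xi,R)|^{1-r/p^*}.
\end{align*}
Dividing by $|B(\xi,R)|$ and multiplying by $R^{\gamma}$ gives
\begin{align*}
\frac{R^{\gamma}}{|B(\xi,R)|}\int_{B(\xi,R)}|u|^r\,d\eta\le R^{\gamma}\,|B(\xi,R)|^{-r/p^*}\,\|u\|_{L^{p^*}}^r = |B(e,1)|^{-r/p^*}R^{\gamma-Qr/p^*}\|u\|_{L^{p^*}}^r .
\end{align*}

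Now $Qr/p^*=r(Q-p)/p=\gamma$, so the power of $R$ vanishes. Taking the supremum over $\xi$ and $R$ yields
\begin{align*}
\|u\|_{\mathcal{M}^{r,r(Q-p)/p}}\le |B(e,1)|^{-1/p^*}\|u\|_{L^{p^*}},
\end{align*}
which is the continuous embedding.

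The only points to check are the following.
\begin{itemize}
\item The index $\gamma=r(Q-p)/p$ lies in $[0,Q]$ as required by Definition \ref{def_mor}. This holds precisely because $r<p^*$, which gives $\gamma<Q$.
\item The ball measure is translation-invariant, since Haar measure equals Lebesgue measure, so the constant $|B(e,1)|$ is independent of $\xi$.
\end{itemize}
There is no real obstacle; the exponent bookkeeping is the whole argument.
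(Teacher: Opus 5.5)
Your proof is correct and is exactly the argument the paper has in mind: the authors omit the proof, stating only that it follows from the definition of the Morrey norm and H\"older's inequality over Heisenberg balls. Your exponent identity $Qr/p^{*}=r(Q-p)/p$, combined with $|B(\xi,R)|=R^{Q}|B(e,1)|$, gives the embedding with the explicit constant $|B(e,1)|^{-1/p^{*}}$.
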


In the following lemma, we establish a crucial embedding between Morrey spaces and Besov spaces of negative orders on the Heisenberg group.

\begin{lemma}\label{lem_Mor<Bes}
    Assume $\alpha\in (0, Q)$. Then every $u\in \mathcal{M}^{1,\alpha}(\mathbb{H}^n)$ is a tempered distribution, and there exists a constant $C=C(Q,\alpha)>0$ such that 
    \begin{align}\label{dn-l-3}
       \|u\|_{\dot{B}^{-\alpha}_{\infty, \infty}(\mathbb{H}^n)}\leq C \|u\|_{\mathcal{M}^{1,\alpha}(\mathbb{H}^n)}
    \end{align}
    for all $u\in \mathcal{M}^{1,\alpha}(\mathbb{H}^n)$.
\end{lemma}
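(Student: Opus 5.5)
The plan is to estimate $\|e^{-t\mathcal{L}_{\mathbb{H}^n}}u\|_{L^\infty}$ directly from the convolution formula and the Gaussian bound \eqref{heat-kernel-prop}, decomposing $\mathbb{H}^n$ into dyadic Korányi annuli adapted to the scale $\sqrt t$.

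First, the tempered distribution claim. For $u\in\mathcal{M}^{1,\alpha}(\mathbb{H}^n)$, the definition with $\xi=e$ gives $\int_{B(e,R)}|u|\,d\eta\le C\|u\|_{\mathcal{M}^{1,\alpha}}R^{Q-\alpha}$ for every $R>0$. Splitting into annuli $B(e,2^{j+1})\setminus B(e,2^j)$ shows that $\int|u(\eta)|(1+\|\eta\|)^{-N}\,d\eta<\infty$ for all $N>Q-\alpha$. Hence $u$ is locally integrable with polynomial growth in the $L^1$ sense, and the pairing with Schwartz functions is continuous. Schwartz functions decay faster than any power of $(1+\|\eta\|)$, since the Korányi norm is comparable to a power of the Euclidean norm at infinity. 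This also shows that $u*p_t$ is well defined pointwise.

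Next, fix $t>0$ and $\xi\in\mathbb{H}^n$, and set $r=\sqrt t$. Write
\[
|e^{-t\mathcal{L}_{\mathbb{H}^n}}u(\xi)|\le\int_{\mathbb{H}^n}|u(\eta)|\,|p_t(\eta^{-1}\circ\xi)|\,d\eta .
\]
Here I use $\xi\circ\eta^{-1}$ versus $\eta^{-1}\circ\xi$ carefully: via the change of variables in the convolution, the kernel is evaluated at a point whose norm equals $d(\eta,\xi)$, up to the symmetry $\|\zeta^{-1}\|=\|\zeta\|$. Split the domain into $B(\xi,r)$ and the annuli $A_j=B(\xi,2^{j+1}r)\setminus B(\xi,2^jr)$ for $j\ge0$. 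On $A_j$ the Gaussian bound gives $|p_t|\le C_0t^{-Q/2}e^{-C_14^j}$. The Morrey bound gives $\int_{B(\xi,2^{j+1}r)}|u|\le C\|u\|_{\mathcal{M}^{1,\alpha}}(2^{j+1}r)^{Q-\alpha}$. Summing,
\[
|e^{-t\mathcal{L}_{\mathbb{H}^n}}u(\xi)|\le C\|u\|_{\mathcal{M}^{1,\alpha}}\,t^{-Q/2}\,r^{Q-\alpha}\sum_{j\ge -1}2^{(j+1)(Q-\alpha)}e^{-C_14^{j}}\le C\|u\|_{\mathcal{M}^{1,\alpha}}\,t^{-\alpha/2}.
\]
The series converges because of the Gaussian decay, and $\alpha<Q$ ensures $R^{Q-\alpha}$ is a positive power, so the ball term is controlled. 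Multiplying by $t^{\alpha/2}$ and taking the supremum over $\xi$ and $t$ gives \eqref{dn-l-3}.

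The only delicate point I expect is justifying that the distributional heat semigroup acting on $u$ (as an element of $\mathcal{S}'$) coincides with the pointwise convolution integral above. This is needed so that the Besov norm of the definition is the quantity estimated. I would handle it by the absolute convergence just established, together with Fubini against test functions. The rest is the routine dyadic summation.
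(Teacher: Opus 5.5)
Your proposal is correct and follows the paper's argument. You prove temperedness from the Morrey bound on dyadic balls centred at $e$, then bound $u*p_t$ pointwise using the Gaussian heat-kernel estimate and the Morrey bound on dyadic balls around $\xi$ at scale $\sqrt t$. The only difference is that you lump the inner region into $B(\xi,\sqrt t)$, whereas the paper uses annuli for all $k\in\mathbb{Z}$. On that inner ball the Gaussian factor should be bounded by $1$ rather than $e^{-C_1/4}$, which only changes the constant. You are also more careful than the paper in flagging that $e^{-t\mathcal{L}_{\mathbb{H}^n}}u$, for $u\in\mathcal{S}'(\mathbb{H}^n)$, must be identified with the absolutely convergent convolution integral; the paper takes this for granted.
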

\begin{proof}
  Let $u\in \mathcal{M}^{1,\alpha}(\mathbb{H}^n)$. By definition, for any $R>0$ and $\xi \in \mathbb{H}^n$, we have
  \begin{align}\label{ball-est}
      \int_{B(\xi,R)}|u(\eta)|\,d\eta \leq  C R^{Q-\alpha}\|u\|_{\mathcal{M}^{1,\alpha}(\mathbb{H}^n)}.
  \end{align}
  First, we verify that $u \in \mathcal{S}'(\mathbb{H}^n)$, that is $u$ defines a tempered distribution. It suffices to show that there exists an integer $m \geq Q$ such that $\int_{\mathbb{H}^n} |u(\eta)|(1+\|\eta\|^m)^{-1} d\eta < \infty$. Using a dyadic decomposition of $\mathbb{H}^n$ centered at the identity $e$, we define the annuli
  \begin{align*}
      A_k:=\left\{\eta\in\mathbb{H}^n\,:\, 2^{-k} <\|\eta\| \leq 2^{-k+1} \right\}, \quad k \in \mathbb{Z}.
  \end{align*}
  Using \eqref{ball-est} over the balls $B(e, 2^{-k+1})$, we estimate
  \begin{align*}
      \int_{\mathbb{H}^n} \frac{|u(\eta)|}{1+\|\eta\|^m}\,d\eta 
      =\sum_{k\in\mathbb{Z}}\int_{A_k} \frac{|u(\eta)|}{1+\|\eta\|^m}\,d\eta
      \leq \sum_{k\in\mathbb{Z}} \frac{1}{1+2^{-mk}}\int_{B(e, 2^{-k+1})}|u(\eta)|\,d\eta 
      \leq C \|u\|_{\mathcal{M}^{1,\alpha}(\mathbb{H}^n)} \sum_{k\in\mathbb{Z}} \frac{ 2^{-k(Q-\alpha)} }{1+2^{-mk}}.
  \end{align*}
  Since $\alpha < Q$, the sum converges for $k \to \infty$. For $k \to -\infty$, the general term behaves like $2^{k(m-Q+\alpha)}$, which converges provided we choose $m > Q-\alpha$. Thus, $u$ is a tempered distribution. 
  
  \smallskip
  
  Next, observe that for any $R>0$, the non-negativity of the heat kernel (refer \cite[pages~84-85]{Thanbook}) implies that for all $\xi \in \mathbb{H}^n$,
  \begin{align*}
      |(e^{-R^2\mathcal{L}_{\mathbb{H}^n}}u)(\xi)| = |(u \ast p_{R^2})(\xi)|\leq \int_{\mathbb{H}^{n}}| u\left(\xi \circ \eta^{-1}\right)|\, p_{R^2}(\eta)\,d\eta = (e^{-R^2\mathcal{L}_{\mathbb{H}^n}}|u|)(\xi).
  \end{align*}
  For $\xi\in\mathbb{H}^n$ and $R>0$, let us define
  \begin{align*}
     G_{R}(\xi) := (e^{-R^2\mathcal{L}_{\mathbb{H}^n}}|u|)(\xi) = \int_{\mathbb{H}^{n}} |u(\eta)|\, p_{R^2}\left( \eta^{-1}\circ \xi\right) \, d\eta.
  \end{align*}
  We claim that  
  \begin{align}\label{inq_G_R<}
       G_{R}(\xi) \leq C \|u\|_{\mathcal{M}^{1,\alpha}(\mathbb{H}^n)} R^{-\alpha},
  \end{align} 
  for all $\xi \in \mathbb{H}^n$, where the constant $C$ is independent of $\xi$. To prove this, we perform a dyadic decomposition scaled by $R$:
  \begin{align*}
      \tilde{A}_k:=\left\{\eta\in\mathbb{H}^n\,:\, 2^{-k}R <\|\eta^{-1}\circ \xi\| \leq 2^{-k+1}R \right\}, \quad k \in \mathbb{Z}.
  \end{align*}
 Using the upper bound of the heat kernel \eqref{heat-kernel-prop}, we have
  \begin{align*}
      G_{R}(\xi) = \sum_{k\in \mathbb{Z}} \int_{\tilde{A}_k} |u(\eta)|\, p_{R^2}\left(\eta^{-1}\circ \xi\right)\,d\eta
      \leq C \sum_{k\in \mathbb{Z}} \int_{\tilde{A}_k} |u(\eta)|\, R^{-Q} e^{-C_1\frac{\|\eta^{-1}\circ \xi\|^2}{R^2}}\,d\eta.
  \end{align*}
  Since $\|\eta^{-1}\circ \xi\| > 2^{-k} R$ on $\tilde{A}_k$, we obtain
  \begin{equation*}
      G_{R}(\xi) \leq C R^{-Q}  \sum_{k\in \mathbb{Z}}  e^{-C_1 2^{-2k}} \int_{B(\xi, 2^{-k+1}R)} |u(\eta)|\,d\eta.
  \end{equation*}
  Applying the Morrey bound \eqref{ball-est} to each ball $B(\xi, 2^{-k+1}R)$ yields
  \begin{equation*}
        G_{R}(\xi)
         \leq C R^{-Q}  \sum_{k\in \mathbb{Z}}  e^{-C_1 2^{-2k}} (2^{-k}R)^{Q-\alpha}\|u\|_{\mathcal{M}^{1,\alpha}(\mathbb{H}^n)}
         = C  \|u\|_{\mathcal{M}^{1,\alpha}(\mathbb{H}^n)} R^{-\alpha} \left( \sum_{k\in \mathbb{Z}}  e^{-C_1 2^{-2k}} 2^{-k(Q-\alpha)}\right).
  \end{equation*}
  The sum in the parentheses converges: as $k \to \infty$, $2^{-k(Q-\alpha)} \to 0$ because $\alpha<Q$, and as $k \to -\infty$, the super-exponential decay $e^{-C_1 2^{-2k}}$ easily dominates the polynomial growth. This establishes our claim \eqref{inq_G_R<}. Taking the supremum over $R>0$ and $\xi \in \mathbb{H}^n$ yields \eqref{dn-l-3}, completing the proof.
\end{proof}

\begin{proof}[\textbf{Proof of Theorem~\ref{thm_hardy_inter}}]
  First, we consider the endpoint $\theta = p/p^*$. Applying \eqref{dn-l-1} with $q=p^*$, we observe that $\theta = (Q-p)/Q$ and $\theta/(\theta-1) = -(Q-p)/p$. Thus, we have  
  \begin{align*}
        \|u\|_{L^{p^*}(\mathbb{H}^n)}\leq C \|\nabla_{\mathbb{H}^n} u \|_{L^p(\mathbb{H}^n)}^{(Q-p)/Q}\|u\|_{\dot{B}_{\infty,\infty}^{-(Q-p)/p}(\mathbb{H}^n)}^{p/Q}.
    \end{align*}
    Using the Besov--Morrey embedding \eqref{dn-l-3} with $\alpha=\frac{Q-p}{p}$, we get
    \begin{align*}
        \|u\|_{L^{p^*}(\mathbb{H}^n)}\leq C \|\nabla_{\mathbb{H}^n} u \|_{L^p(\mathbb{H}^n)}^{(Q-p)/Q} \|u\|_{\mathcal{M}^{1,\frac{Q-p}{p}}(\mathbb{H}^n)}^{p/Q},
    \end{align*}
   which is the desired inequality for $r=1$ and $\theta=p/p^*$. Finally, for $r\neq 1$, applying \eqref{dn-l-2} with $\gamma=r(Q-p)/p$ and $q=r$, we deduce
     \begin{align*}
        \|u\|_{L^{p^*}(\mathbb{H}^n)}\leq C \|\nabla_{\mathbb{H}^n} u \|_{L^p(\mathbb{H}^n)}^{(Q-p)/Q} \|u\|_{\mathcal{M}^{r,r(Q-p)/p}(\mathbb{H}^n)}^{p/Q}.
    \end{align*}
    This establishes the inequality for $\theta=p/p^*$. 
    
    For the remaining range $\theta \in (p/p^*, 1)$, we recall the standard Folland--Stein embedding (see \cite{VSCC92} for instance):
    \begin{align*}
    S^{1,p}(\mathbb{H}^n) \hookrightarrow L^{p^*}(\mathbb{H}^n).
\end{align*}
By Lemma~\ref{morey-sblv}, we also have
\begin{align*}
    L^{p^*}(\mathbb{H}^n)\hookrightarrow \mathcal{M}^{r, r(Q-p)/p}(\mathbb{H}^n).
\end{align*}
Combining these embeddings with the established endpoint case and interpolating, we obtain the desired inequality for all $p/p^* \leq \theta < 1$, completing the proof.
\end{proof}
\begin{remark}
For our purposes, we state Theorem~\ref{thm_hardy_inter} only in the Heisenberg group setting. Although Theorem~\ref{th-chamar} holds for general stratified Lie groups, so does Lemma~\ref{lem_Mor<Bes}, mutatis mutandis. Consequently, Theorem~\ref{thm_hardy_inter} can also be established for general stratified Lie groups. 
\end{remark}

\section{Palais--Smale Profile Decomposition}\label{ps section}
This section is devoted to establishing the Palais-Smale profile decomposition for problem \eqref{eqn_pert_Hn}. For that we recall the Jerison-Lee operator family given by  $\left\{ D_{\eta, \lambda}: \eta \in \mathbb{H}^n, \lambda>0 \right\}$, where $D_{\eta, \lambda}$ is defined in \eqref{eq_jer_Lee_bub}. It is easy to check that for any $(\eta,\lambda)\in \mathbb{H}^n \times (0,\infty)$, $D_{\eta, \lambda}\in \mathcal{U}\left(S^1(\hn)\right)$, the space of all unitary operators on $S^1$, i.e.,
\begin{align}\label{uni-nor}
  \|u\|_{S^1} = \|D_{\eta,\lambda} u\|_{S^1} \qquad \text{ for all } \quad u\in S^1(\hn).
\end{align}
Moreover, if $(\eta_k, \lambda_k)\to (\eta, \lambda)\in \mathbb{H}^n \times (0,\infty)$, for $k\to\infty$, then 
\begin{align}\label{act-con}
 D_{\eta_k, \lambda_k}  \to D_{\eta, \lambda} \text{ in the strong operator norm in }\mathcal{U}\left(S^1(\hn)\right).
\end{align}

The following lemma provides a characterization of the asymptotic orthogonality of two sequences of translation-dilation operators on $S^1(\hn)$.
\begin{lemma}\label{pp-lemma-3}
Let $\{(\xi_k,\lambda_k)\}, \{(\eta_k,r_k)\} \subset \mathbb{H}^n \times (0,\infty)$. 
Then the following statements are equivalent as $k \to \infty$
\begin{enumerate}[(i)]
    \item $\left\langle D_{\xi_k,\lambda_k} u,\, D_{\eta_k,r_k} v \right\rangle_{S^1}
    \to 0, \quad \text{for all } u, v \in S^1(\hn)$,
    
    \item $\left| \log\!\left( \frac{\lambda_k}{r_k} \right) \right| +\left\| \delta_{\lambda_k^{-1}}(\xi_k^{-1}\circ \eta_k) \right\|
     \to \infty.$
\end{enumerate}
\end{lemma}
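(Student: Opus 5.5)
The plan is to reduce everything to a single sequence of operators by exploiting the group structure of the family $\{D_{\eta,\lambda}\}$. A direct computation from \eqref{eq_jer_Lee_bub} gives the composition rule
\[
D_{\xi,\lambda}^{-1}=D_{\delta_{1/\lambda}(\xi^{-1}),\,1/\lambda},\qquad D_{\xi,\lambda}^{-1}D_{\eta,r}=D_{\zeta,\,r/\lambda},\quad \zeta:=\delta_{\lambda^{-1}}(\xi^{-1}\circ\eta).
\]
By unitarity \eqref{uni-nor}, this yields
\[
\langle D_{\xi_k,\lambda_k}u,D_{\eta_k,r_k}v\rangle_{S^1}=\langle u,D_{\zeta_k,\mu_k}v\rangle_{S^1},\qquad \mu_k=r_k/\lambda_k,\quad \zeta_k=\delta_{\lambda_k^{-1}}(\xi_k^{-1}\circ\eta_k).
\]
Condition (ii) then says exactly that $|\log\mu_k|+\|\zeta_k\|\to\infty$. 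So it suffices to prove the following: $D_{\zeta_k,\mu_k}\rightharpoonup0$ weakly (as operators) if and only if $(\zeta_k,\mu_k)$ leaves every compact set of $\mathbb{H}^n\times(0,\infty)$.

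For (ii)$\Rightarrow$(i), I pass to a subsequence and argue by contradiction. Since the operators are uniformly bounded, density lets me take $u,v\in C_c^\infty(\mathbb{H}^n)$ and write the pairing as $\int \mathcal{L}_{\mathbb{H}^n}u\cdot D_{\zeta_k,\mu_k}v\,d\xi$. There are three cases.

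\emph{Case $\mu_k\to\infty$.} We have $\|D_{\zeta_k,\mu_k}v\|_\infty\le \mu_k^{-(Q-2)/2}\|v\|_\infty\to0$, and $\mathcal{L}u\in L^1$, so the pairing tends to $0$.

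\emph{Case $\mu_k\to0$.} The support of $D_{\zeta_k,\mu_k}v$ has measure $O(\mu_k^Q)$. H\"older's inequality with $\|D v\|_{L^{2^\star}}=\|v\|_{L^{2^\star}}$ and $\mathcal{L}u\in L^\infty$ gives a bound $\lesssim \mu_k^{Q(1-1/2^\star)}\to0$. (Alternatively, use the symmetric inverse operator.)

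\emph{Case $\mu_k$ bounded in $[c,C]$.} Then $\|\zeta_k\|\to\infty$, so $\operatorname{supp}D_{\zeta_k,\mu_k}v\subset \zeta_k\circ\delta_{\mu_k}(\operatorname{supp}v)$, and this set eventually becomes disjoint from $\operatorname{supp}u$ because $\|\zeta_k\circ\delta_{\mu_k}\eta\|\ge\|\zeta_k\|-C'$ by the quasi-triangle inequality for the Kor\'anyi norm (with a constant, which suffices). The pairing is then eventually $0$.

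For (i)$\Rightarrow$(ii), I take the contrapositive. If (ii) fails, some subsequence has $(\zeta_k,\mu_k)\to(\zeta,\mu)\in\mathbb{H}^n\times(0,\infty)$. By \eqref{act-con}, $D_{\zeta_k,\mu_k}v\to D_{\zeta,\mu}v$ strongly. Choosing $u=D_{\zeta,\mu}v$ with $v\neq0$ gives the limit $\|v\|_{S^1}^2\neq0$, which contradicts (i).

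The main obstacle is the bookkeeping in the composition identity under the noncommutative law. One has to verify that $D_{\xi,\lambda}^{-1}D_{\eta,r}$ is again of the form $D_{\zeta,r/\lambda}$ with exactly $\zeta=\delta_{\lambda^{-1}}(\xi^{-1}\circ\eta)$, using that $\delta_\lambda$ is a group automorphism. A second point is checking that the unbounded-translation case really separates supports despite noncommutativity; the quasi-triangle inequality handles this.
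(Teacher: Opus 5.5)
Your proposal is correct and follows essentially the paper's argument. It uses the same reduction $\langle D_{\xi,\lambda}u,D_{\eta,r}v\rangle_{S^1}=\langle u,D_{\delta_{1/\lambda}(\xi^{-1}\circ\eta),\,r/\lambda}v\rangle_{S^1}$, the same contradiction via \eqref{act-con} for (i)$\Rightarrow$(ii), and integration by parts against $C_c^\infty$ functions with a case split on the relative scale for (ii)$\Rightarrow$(i). Your three cases (sup-norm decay, the direct bound $\|D_{\zeta_k,\mu_k}v\|_{L^1}=\mu_k^{(Q+2)/2}\|v\|_{L^1}$, and eventually disjoint supports) are a cleaner version of the paper's two cases, which use dominated convergence and an $\varepsilon$-ball splitting instead.
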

\begin{proof}
We first observe that, for any $\lambda,\,r>0$ and $\xi,\eta\in\hn$ we have,
\begin{align*}
    \left\langle D_{\xi,\lambda}u, D_{\eta,r}v\right\rangle_{S^1}
    &=\int_{\hn} (\lambda r)^{-\frac{Q}{2}}\nabla_{\hn}u\left(\delta_{\frac{1}{\lambda}}(\xi^{-1}\circ \bar\xi)\right)\cdot\nabla_{\hn}v\left(\delta_{\frac{1}{r}}(\eta^{-1}\circ\bar\xi)\right)\,d\bar \xi\\
    &=\int_{\hn} \left(\frac{r}{\lambda}\right)^{-\frac{Q}{2}}\nabla_{\hn}u(\bar\xi)\cdot\nabla_{\hn}v\left(\delta_{\frac{1}{r}}\left(\eta^{-1}\circ\xi\circ\delta_{\lambda}(\bar\xi)\right)\right)\,d\bar\xi\\
    &=\left\langle u, D_{\delta_{\frac{1}{\lambda}}\left(\xi^{-1}\circ\eta\right),(\frac{r}{\lambda})}v\right\rangle_{S^1},
\end{align*}
holds for all $u,v\in S^1(\hn)$. Therefore, to complete the proof, it is enough to consider $\xi_k=e,\, \lambda_k=1$ for all $k\in\mathbb{N}$. Now, if (ii) does not hold, then up to a subsequence there exists $\eta_k\to \eta\in \mathbb{H}^n$ and $r_k\to r>0$ for $k\to \infty$. Now, start with any $u\not \equiv 0$, then we have
\begin{align*}
    0<\|u\|_{S^1}^2=\lim_{k\to \infty}\langle D_{\eta_k,r_k} u,\, D_{\eta_k,r_k} u \rangle_{S^1}= \lim_{k\to \infty}\langle D_{\eta_k,r_k} u-D_{\eta,r} u,\, D_{\eta_k,r_k} u \rangle_{S^1}+\lim_{k\to \infty}\langle D_{\eta,r} u,\, D_{\eta_k,r_k} u \rangle_{S^1}.
\end{align*}
Thus Cauchy-Schwarz inequality together with \eqref{act-con} yields, 
\begin{align}\label{cs-con}
\left|\langle D_{\eta_k,r_k} u-D_{\eta,r} u,\, D_{\eta_k,r_k} u \rangle_{S^1}\right| \leq \left\|D_{\eta_k,r_k} u-D_{\eta,r} u \right\|_{S^1}\|u \|_{S^1}\rightarrow 0, \text{ as }k\to\infty.
\end{align}

Now, from~$(i)$ with $D_{\xi_k,\lambda_k}u$ replaced by $D_{\eta,r} u$ and $D_{\eta_k,r_k}v$ by $D_{\eta_k,r_k}u$, and then using \eqref{cs-con} and \eqref{uni-nor}, we have,
\begin{align*}
    0= \lim_{k\to \infty}\langle D_{\eta_k,r_k} u,\, D_{\eta_k,r_k} u \rangle_{S^1}=\lim_{k\to\infty}\|D_{\eta_k,r_k}u\|_{S^1}^2=\|u\|_{S^1}^2\neq 0,
\end{align*}
which gives the contradiction. Hence, $(ii)$ holds. 

Conversely, now we assume that condition $(ii)$ holds. Then, up to a subsequence, we can only consider two cases, either $r_k\to 0$ or $r_k\geq r>0$ for $k\to \infty$. Now, thanks to the density property of $C_c^\infty(\hn)$ in $S^1(\hn)$, we can only work with compactly supported smooth functions. Using integration by parts, we have
\begin{align}\label{pp-lem-3-bp}
    \langle u, D_{\eta_k,r_k}v\rangle_{S^1}=\int_{\hn}(\mathcal{L}_{\hn}u)(D_{\eta_k,r_k} v)\, d\xi \qquad \text{ for all }\quad u,\, v\in C_c^\infty(\hn).
\end{align}

\textbf{Case I : $r_k\geq r>0$ for $k\to \infty$.} In this case we have $\|\eta_k\|\to \infty$ and/or $r_k\to \infty$ for $k\to\infty$. Now notice that 
\begin{align*}
     |D_{\eta_k,r_k}v|\leq r^{-\frac{Q-2}{2}}\|v\|_{L^\infty(\hn)}
\end{align*}
and hence $D_{\eta_k,r_k}v$ is uniformly bounded. Also $D_{\eta_k,r_k}v\to 0$ a.e. for $k\to \infty$. Hence, using the Dominated Convergence Theorem in \eqref{pp-lem-3-bp}, we deduce (i).

\textbf{Case II : $r_k\to 0$ for $k\to \infty$.} For any $\varepsilon>0$, we write
\begin{align*}
   \left|\langle u, D_{\eta_k,r_k}v\rangle_{S^1}\right|
   &= \left|\int_{\hn} \mathcal{L}_{\hn}u(\eta_k\circ \xi) \,r_{k}^{-\frac{Q-2}{2}}v(\delta_{\frac{1}{r_k}}(\xi)) \,d\xi\right|\\
   &\leq  \left|\int_{B(e, \varepsilon)} \mathcal{L}_{\hn}u(\eta_k\circ \xi) \, r_{k}^{-\frac{Q-2}{2}}v(\delta_{\frac{1}{r_k}}(\xi)) \,d\xi \right|\\& \qquad+ \left|\int_{\hn\setminus B(e, \varepsilon)} \mathcal{L}_{\hn}u(\eta_k\circ \xi) \, r_{k}^{-\frac{Q-2}{2}}v(\delta_{\frac{1}{r_k}}(\xi)) \,d\xi\right|:= I + II.
\end{align*}
Let us consider $II$ first. Since $v\in C_c^\infty(\hn)$, then $\|D_{\eta_k,r_k}v\|_{L^{\infty}(\hn\setminus B(\eta_k, \varepsilon))}\rightarrow 0$ as $k\rightarrow \infty$. Therefore,
\begin{align}\label{est-I}
    \nonumber II&= \left|\int_{\hn\setminus B(e, \varepsilon)} \mathcal{L}_{\hn}u(\eta_k\circ \xi) \, r_{k}^{-\frac{Q-2}{2}}v(\delta_{\frac{1}{r_k}}(\xi)) \,d\xi\right|\\
    &\leq  \left(\int_{\hn\setminus B(e, \varepsilon)} \left|\mathcal{L}_{\hn}u( \xi)\right|  \,d\xi\right) \|D_{\eta_k,r_k}v\|_{L^{\infty}(\hn\setminus B(\eta_k, \varepsilon))} \rightarrow 0 \quad \text{as} \quad k \rightarrow \infty.
\end{align}

For $I$, using H\"older's inequality, for any $\varepsilon>0$, we get
\begin{align}\label{est-II}
    \nonumber \left|\int_{B(e, \varepsilon)} \mathcal{L}_{\hn}u(\eta_k\circ \xi) \, r_{k}^{-\frac{Q-2}{2}}v(\delta_{\frac{1}{r_k}}(\xi)) \,d\xi \right|
    \nonumber & \leq \|\mathcal{L}_{\hn}u\,\|_{L^{\infty}(\hn)} \int_{B(e, \varepsilon)}  \left|r_{k}^{-\frac{Q-2}{2}}v(\delta_{\frac{1}{r_k}}(\xi))\right| \,d\xi \\
    \nonumber & \leq \|\mathcal{L}_{\hn}u\,\|_{L^{\infty}(\hn)} \left(\int_{\hn}  \left|r_{k}^{-\frac{Q-2}{2}}v(\delta_{\frac{1}{r_k}}(\xi))\right|^{2^{\star}}\,d\xi \right)^{\frac{1}{2^{\star}}} \left|B(e,\varepsilon) \right|^{\frac{2Q}{Q+2}}\\
     & \leq C \|\mathcal{L}_{\hn}u\,\|_{L^{\infty}(\hn)} \|v\|_{L^{2^{*}}(\hn)} \times \varepsilon^{\frac{2Q^2}{Q+2}}.
\end{align}
The estimates \eqref{est-I} and \eqref{est-II} together imply
\begin{align*}
    \limsup_{k\rightarrow \infty} \left|\langle u, D_{\eta_k,r_k}v\rangle_{S^1}\right| \leq C \|\mathcal{L}_{\hn}u\,\|_{L^{\infty}(\hn)} \|v\|_{L^{2^{\star}}(\hn)} \times \varepsilon^{\frac{2Q^2}{Q+2}} \rightarrow 0 \quad \text{as} \quad \varepsilon \rightarrow 0.
\end{align*}
Hence, the condition $(i)$ holds. This completes the proof.
\end{proof}

We next prove a Brezis--Lieb type lemma for the critical nonlinearity, which will be used repeatedly in the sequel.

\begin{lemma}\label{lem_Brezis_Lieb_nonlin}
  Let $\{\varphi_k\}$ be a sequence  converging to $\varphi$ weakly in 
    $S^1(\mathbb{H}^n)$ and pointwise a.e. in $\mathbb{H}^n$.
    If $a \in L^{\infty}(\mathbb{H}^n)$, then
    \[
        a |\varphi_k|^{2^{\star}-2}\varphi_k 
        - a|\varphi_k - \varphi|^{2^{\star}-2}(\varphi_k - \varphi) 
        - a|\varphi|^{2^{\star}-2}\varphi \to 0 
        \quad \text{strongly in } (S^1(\mathbb{H}^n))'.
    \]
\end{lemma}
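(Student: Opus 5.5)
By duality, it suffices to show that
\[
R_k := |\varphi_k|^{2^\star-2}\varphi_k - |\varphi_k-\varphi|^{2^\star-2}(\varphi_k-\varphi) - |\varphi|^{2^\star-2}\varphi \to 0 \quad \text{in } L^{(2^\star)'}(\mathbb{H}^n), \qquad (2^\star)' = \tfrac{2Q}{Q+2}.
\]
Indeed, by H\"older's inequality and the Folland--Stein embedding (Theorem~\ref{thm_JL_bubble} gives the sharp constant $\mathcal{S}_Q$), for every $\psi\in S^1(\mathbb{H}^n)$,
\[
\Bigl|\int_{\mathbb{H}^n} a R_k\psi\,d\xi\Bigr| \le \|a\|_{L^\infty}\|R_k\|_{L^{(2^\star)'}}\|\psi\|_{L^{2^\star}} \le C\|a\|_{L^\infty}\|R_k\|_{L^{(2^\star)'}}\|\psi\|_{S^1}.
\]
Taking the supremum over $\|\psi\|_{S^1}\le 1$ then gives convergence in $(S^1)'$. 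From here the argument is the standard Brezis--Lieb argument applied to the nonlinearity $g(s)=|s|^{2^\star-2}s$.

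Set $w_k=\varphi_k-\varphi$. Weak convergence in $S^1$ implies that $\{\varphi_k\}$ is bounded in $S^1$, hence bounded in $L^{2^\star}$. So $\{w_k\}$ is bounded in $L^{2^\star}$ and $w_k\to0$ a.e. The key pointwise estimate, which follows from the mean value theorem applied to $g$ and Young's inequality, is: for every $\varepsilon>0$ there is $C_\varepsilon$ such that
\[
\bigl| g(w+\varphi)-g(w)-g(\varphi)\bigr| \le \varepsilon |w|^{2^\star-1} + C_\varepsilon |\varphi|^{2^\star-1}
\quad\text{for all } w,\varphi\in\mathbb{R}.
\]
To see this, write $|g(w+\varphi)-g(w)|\le C(|w|^{2^\star-2}+|\varphi|^{2^\star-2})|\varphi|$, and split $|w|^{2^\star-2}|\varphi|\le \varepsilon|w|^{2^\star-1}+C_\varepsilon|\varphi|^{2^\star-1}$ using Young's inequality with exponents $\frac{2^\star-1}{2^\star-2}$ and $2^\star-1$.

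Next, define $h_{k,\varepsilon}=\bigl(|R_k|-\varepsilon|w_k|^{2^\star-1}\bigr)^+$. Then $0\le h_{k,\varepsilon}\le C_\varepsilon|\varphi|^{2^\star-1}$, and the dominating function lies in $L^{(2^\star)'}$ because $(2^\star-1)(2^\star)'=2^\star$. Moreover $h_{k,\varepsilon}\to0$ a.e.: since $w_k\to0$ a.e. and $g$ is continuous, $R_k\to g(\varphi)-g(0)-g(\varphi)=0$ a.e. Dominated convergence therefore gives $\|h_{k,\varepsilon}\|_{L^{(2^\star)'}}\to0$. Finally,
\[
|R_k|\le h_{k,\varepsilon}+\varepsilon|w_k|^{2^\star-1},
\]
so
\[
\limsup_{k\to\infty}\|R_k\|_{L^{(2^\star)'}}\le \varepsilon\sup_k\|w_k\|_{L^{2^\star}}^{2^\star-1}\le C\varepsilon,
\]
and letting $\varepsilon\to0$ completes the proof.

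The main point needing care is that weak convergence alone does not suffice: the a.e. convergence is what drives the dominated convergence step, and the uniform $L^{2^\star}$ bound on $w_k$ controls the $\varepsilon$-error. The non-commutative Heisenberg structure plays no role here beyond the Folland--Stein embedding, since the argument is purely pointwise and measure-theoretic.
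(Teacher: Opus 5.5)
Your proof is correct and follows essentially the same route as the paper: reduce by duality to $L^{(2^\star)'}$ convergence, use an $\varepsilon$-splitting pointwise inequality, apply dominated convergence to the positive part $\bigl(\,\cdot\,-\varepsilon|w_k|^{\cdots}\bigr)^+$, and control the remainder with the uniform $L^{2^\star}$ bound. The only difference is cosmetic. You derive the inequality for $|R_k|$ itself via the mean value theorem and Young, then close with Minkowski in $L^{(2^\star)'}$. The paper instead quotes the version raised to the power $(2^\star)'$ from Ambrosetti--Malchiodi and integrates that directly.
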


\begin{proof}
We set $\psi_k = \varphi_k - \varphi$, so $\psi_k \rightharpoonup 0$ weakly in 
$S^1(\mathbb{H}^n)$ and $\psi_k \to 0$ a.e. in $\mathbb{H}^n$. Since weakly convergent sequences 
are bounded and $S^1(\mathbb{H}^n) \hookrightarrow L^{2^{\star}}(\mathbb{H}^n)$ 
continuously, the sequence $\{\psi_k\}$ is uniformly bounded in $L^{2^{\star}}(\mathbb{H}^n)$:
\[
    \sup_k \|\psi_k\|_{L^{2^{\star}}(\mathbb{H}^n)} \leq M < \infty.
\]
We use the following algebraic inequality  \cite[Page~182, (11.8)]{AM07}:
for $p > 2$ and every $\varepsilon > 0$, there exists $C_\varepsilon > 0$ such that 
for all $s, t \in \mathbb{R}$,
\[
    \bigl||s+t|^{p-2}(s+t) - |s|^{p-2}s - |t|^{p-2}t\bigr|^{\frac{p}{p-1}} 
    \leq \varepsilon|s|^p + C_\varepsilon|t|^p.
\]
Applying this with $p = 2^{\star}$, $s = \psi_k(\xi)$, $t = \varphi(\xi)$, and 
recalling $\varphi_k = \psi_k + \varphi$ and $(2^{\star})' = \frac{2Q}{Q+2}$, we obtain the pointwise inequality
\begin{equation}\label{eq:pointwise}
    \left|a|\varphi_k|^{2^{\star}-2}\varphi_k 
    - a|\psi_k|^{2^{\star}-2}\psi_k 
    - a|\varphi|^{2^{\star}-2}\varphi\right|^{(2^{\star})'}
    \leq \|a\|_{L^\infty}^{(2^{\star})'} 
    \bigl(\varepsilon|\psi_k|^{2^{\star}} + C_\varepsilon|\varphi|^{2^{\star}}\bigr)
    \quad \text{a.e. in } \hn.
\end{equation}
Let us denote the left-hand side of \eqref{eq:pointwise} by $F_k(\xi) \geq 0$. 
Since $\psi_k \to 0$ a.e., we have $F_k \to 0$ a.e. We get from \eqref{eq:pointwise},
\[
    \Bigl(F_k(\xi) - \varepsilon\|a\|_{L^\infty}^{(2^{\star})'}|\psi_k(\xi)|^{2^{\star}}\Bigr)_+ 
    \leq C_\varepsilon \|a\|_{L^\infty}^{(2^{\star})'}|\varphi(\xi)|^{2^{\star}}.
\]
The right-hand side is integrable since $\varphi \in L^{2^{\star}}(\mathbb{H}^n)$, 
and the left-hand side tends to zero a.e. By the Dominated Convergence Theorem, we obtain
\[
    \lim_{k\to\infty}\int_{\mathbb{H}^n}
    \Bigl(F_k - \varepsilon\|a\|_{L^\infty}^{(2^{\star})'}|\psi_k|^{2^{\star}}
    \Bigr)_+\, d\xi = 0.
\]
Using the elementary inequality $F_k\leq 
(F_k-\varepsilon\|a\|_{L^\infty}^{(2^{\star})'}|\psi_k|^{2^{\star}})_+ \, 
+\, \varepsilon\|a\|_{L^\infty}^{(2^{\star})'}|\psi_k|^{2^{\star}}$ and integrating,
\[
    \limsup_{k\to\infty}\int_{\mathbb{H}^n}F_k\,d\xi
    \leq\varepsilon\|a\|_{L^\infty}^{(2^{\star})'}\sup_k\|\psi_k\|_{L^{2^{\star}}}^{2^{\star}}
    \leq\varepsilon\|a\|_{L^\infty}^{(2^{\star})'}M^{2^{\star}}.
\]
Since $\varepsilon>0$ is arbitrary, $\int_{\mathbb{H}^n}F_k\,d\xi\to 0$, which means exactly that
\[
    a|\varphi_k|^{2^{\star}-2}\varphi_k - a|\psi_k|^{2^{\star}-2}\psi_k 
    - a|\varphi|^{2^{\star}-2}\varphi \to 0 
    \quad\text{in } L^{(2^{\star})'}(\mathbb{H}^n).
\]
By duality of the Folland–Stein-Sobolev embedding $S^1(\mathbb{H}^n) \hookrightarrow L^{2^{\star}}(\mathbb{H}^n)$, 
the space $L^{(2^{\star})'}(\mathbb{H}^n)$ embeds continuously into $(S^1(\mathbb{H}^n))'$. 
The claimed convergence in $(S^1(\mathbb{H}^n))'$ follows immediately.
\end{proof}

\begin{remark}\label{rem_ae_convergence}
    The pointwise convergence assumed in Lemma~\ref{lem_Brezis_Lieb_nonlin} 
    follows from weak convergence up to a subsequence. Indeed, by the subelliptic 
    Rellich--Kondrachov theorem \cite{FS74}, $S^1(\mathbb{H}^n)$ embeds compactly 
    into $L^q_{\mathrm{loc}}(\mathbb{H}^n)$ for $1 \leq q < 2^{\star}$. Hence if 
    $\varphi_k \rightharpoonup \varphi$ in $S^1(\mathbb{H}^n)$, then $\varphi_k \to \varphi$ 
    strongly in $L^q(B(e,R))$ for any bounded ball $B(e,R) \subset \mathbb{H}^n$. A 
    standard diagonal argument over an exhausting sequence of balls yields a 
    subsequence converging pointwise a.e.\ on all of $\mathbb{H}^n$.
\end{remark}

\begin{proof}[\textbf{Proof of Theorem \ref{thm_PS_decomp}}]
Let $\{u_k\} \subset S^1(\mathbb{H}^n)$ be a $(PS)_c$ sequence for 
$\mathcal{E}_{a,f}$, that is as $k\to\infty$, we have $\mathcal{E}_{a,f}(u_k) \to c$ and 
$\mathcal{E}_{a,f}'(u_k) \to 0$ in $(S^1(\mathbb{H}^n))'$. Explicitly, for 
every $\varphi \in S^1(\mathbb{H}^n)$,
\begin{equation}\label{exp_E,a,f_prime}
   {}_{(S^1)'}\left\langle\mathcal{E}_{a,f}'(u_k), \varphi \right\rangle_{S^1}=\int_{\mathbb{H}^n} \nabla_{\mathbb{H}^n} u_k \cdot \nabla_{\mathbb{H}^n} 
    \varphi \, d\xi 
    - \int_{\mathbb{H}^n} a(\xi)|u_k|^{2^{\star}-2}u_k\varphi\, d\xi 
    - {}_{(S^1)'}\langle f, \varphi\rangle_{S^1} 
    = o(1).
\end{equation}

\smallskip\noindent
\textbf{Step I: Boundedness of  $(PS)_c$ sequence.}

Testing \eqref{exp_E,a,f_prime} with $\varphi = u_k$ and combining the result with the expression for the energy $\mathcal{E}_{a,f}(u_k)$, the critical nonlinear terms cancel exactly, yielding
\begin{align*}
    \mathcal{E}_{a,f}(u_k) - \frac{1}{2^{\star}}{}_{(S^1)'}\langle\mathcal{E}_{a,f}'(u_k), u_k\rangle_{S^1} 
   & = \left(\frac{1}{2} - \frac{1}{2^{\star}}\right)\|u_k\|_{S^1}^2 
    - \left(1 - \frac{1}{2^{\star}}\right){}_{(S^1)'}\langle f, u_k\rangle_{S^1}\\&\geq \frac{1}{Q}\|u_k\|_{S^1}^2 - \left(1 - \frac{1}{2^{\star}}\right) \|f\|_{(S^1)'}\|u_k\|_{S^1}.
\end{align*}
Since $\mathcal{E}_{a,f}(u_k) \to c$ and $\left\|\mathcal{E}_{a,f}'(u_k)\right\|_{(S^1)'} = o(1)$ as $k \to \infty$, it follows from a standard argument that the sequence $\{u_k\}$ must be uniformly bounded in $S^1(\mathbb{H}^n)$. Furthermore, by the Banach-Alaoglu theorem and Remark \ref{rem_ae_convergence}, we extract 
a subsequence (still denoted $\{u_k\}$) and $u^{(0)} \in S^1(\mathbb{H}^n)$ 
such that $u_k \rightharpoonup u^{(0)}$ weakly in $S^1(\mathbb{H}^n)$ and 
$u_k \to u^{(0)}$ pointwise a.e.\ in $\mathbb{H}^n$.

\smallskip\noindent
\textbf{Step II: The weak limit $u^{(0)}$ is a weak solution to 
\eqref{eqn_pert_Hn}.}

\smallskip
Set $\sigma_k := u_k - u^{(0)}$, so $\sigma_k\rightharpoonup 0$ weakly in 
$S^1(\mathbb{H}^n)$ and \textup{(up to a subsequence, still denoted by $\{\sigma_k\}$)} $\sigma_k\to 0$ a.e.\ by Remark~\ref{rem_ae_convergence}. 
Fix $\varphi\in C_c^\infty(\mathbb{H}^n)$ with $K:=\operatorname{supp}(\varphi)$, 
and pass to the limit in \eqref{exp_E,a,f_prime} term by term. By weak convergence in $S^1(\mathbb{H}^n)$,
\begin{align}\label{eq_grad_II}
    \int_{\mathbb{H}^n}\nabla_{\mathbb{H}^n}u_k\cdot
    \nabla_{\mathbb{H}^n}\varphi\,d\xi
    \longrightarrow
    \int_{\mathbb{H}^n}\nabla_{\mathbb{H}^n}u^{(0)}\cdot
    \nabla_{\mathbb{H}^n}\varphi\,d\xi.
\end{align}
Applying Lemma~\ref{lem_Brezis_Lieb_nonlin} to $(u_k)$ with weak limit $u^{(0)}$ gives us
\begin{align}\label{eq_nonlin_II}
    a|u_k|^{2^{\star}-2}u_k
    = a|\sigma_k|^{2^{\star}-2}\sigma_k + a|u^{(0)}|^{2^{\star}-2}u^{(0)} + \varepsilon_k,
\end{align}
where $\varepsilon_k\to 0$ strongly in $(S^1(\mathbb{H}^n))'$, so 
$|{}_{(S^1)'}\langle\varepsilon_k,\varphi\rangle_{S^1}|\leq\|\varepsilon_k\|_{(S^1)'}\|\varphi\|_{S^1}\to 0$. To show that the remainder term involving $\sigma_k$ vanishes, let $K = \operatorname{supp}(\varphi)$. Since $\varphi \in C_c^\infty(\mathbb{H}^n)$, we can bound the integral using the $L^\infty$-norm of the test function:
\begin{align*}
    \left|\int_K a |\sigma_k|^{2^{\star}-2}\sigma_k\,\varphi\,d\xi\right| 
    \leq \|a\|_{L^{\infty}(\hn)} \|\varphi\|_{L^{\infty}(K)} \int_K |\sigma_k|^{2^{\star}-1}\,d\xi \leq  \|a\|_{L^{\infty}(\hn)} \|\varphi\|_{L^{\infty}(K)} \|\sigma_k\|_{L^{2^{\star}-1}(K)}^{2^{\star}-1}.
\end{align*}
Because $2^{\star}-1 < 2^{\star}$, the local embedding $S^1(\mathbb{H}^n)\hookrightarrow L^{2^{\star}-1}_{\mathrm{loc}}(\mathbb{H}^n)$ is compact (see Remark~\ref{rem_ae_convergence}). Since $\sigma_k \rightharpoonup 0$ weakly in $S^1(\mathbb{H}^n)$, it converges strongly to zero in $L^{2^{\star}-1}(K)$. Therefore, the right-hand side of the inequality vanishes as $k \to \infty$, yielding
\begin{align}\label{eq_last_ter_II}
    \int_{\mathbb{H}^n}a(\xi)|u_k|^{2^{\star}-2}u_k\,\varphi\,d\xi
    \longrightarrow
    \int_{\mathbb{H}^n}a(\xi)|u^{(0)}|^{2^{\star}-2}u^{(0)}\,\varphi\,d\xi.
\end{align}
Utilizing \eqref{eq_grad_II}, \eqref{eq_nonlin_II}, and \eqref{eq_last_ter_II},  we can pass the limit in \eqref{exp_E,a,f_prime}, whence we obtain
\[
    \int_{\mathbb{H}^n}\nabla_{\mathbb{H}^n}u^{(0)}\cdot
    \nabla_{\mathbb{H}^n}\varphi\,d\xi
    -\int_{\mathbb{H}^n}a(\xi)|u^{(0)}|^{2^{\star}-2}u^{(0)}\,\varphi\,d\xi
    -{}_{(S^1)'}\langle f,\varphi\rangle_{S^1}=0
\]
for all $\varphi\in C_c^\infty(\mathbb{H}^n)$. Since $C_c^\infty(\mathbb{H}^n)$ 
is dense in $S^1(\mathbb{H}^n)$, $u^{(0)}$ is a weak solution to 
\eqref{eqn_pert_Hn}.

\smallskip\noindent
\textbf{Step III: $\{\sigma_k\}$ is a $(PS)$ sequence for $\mathcal{E}_{a,0}$ 
at level $c-\mathcal{E}_{a,f}(u^{(0)})$.}

\smallskip
We recall that $\sigma_k=u_k-u^{(0)}\rightharpoonup 0$ weakly in $S^1(\mathbb{H}^n)$, which gives 
$\langle u_k,u^{(0)}\rangle_{S^1}\to\|u^{(0)}\|_{S^1}^2$. Hence 
\begin{align}\label{eq_ener_III}
    \|\sigma_k\|_{S^1}^2
    =\|u_k\|_{S^1}^2-2\langle u_k,u^{(0)}\rangle_{S^1}
    +\|u^{(0)}\|_{S^1}^2
    =\|u_k\|_{S^1}^2-\|u^{(0)}\|_{S^1}^2+o(1).
\end{align}
By the classical Br\'{e}zis--Lieb lemma with weight 
$a\in L^\infty(\mathbb{H}^n)$,
\begin{align}\label{eq_app_BL_III}
    \int_{\mathbb{H}^n}a(\xi)|\sigma_k|^{2^{\star}}\,d\xi
    =\int_{\mathbb{H}^n}a(\xi)|u_k|^{2^{\star}}\,d\xi
    -\int_{\mathbb{H}^n}a(\xi)|u^{(0)}|^{2^{\star}}\,d\xi+o(1).
\end{align}
Since $\sigma_k\rightharpoonup 0$ in $S^1(\mathbb{H}^n)$ and $f\in(S^1(\mathbb{H}^n))'$, 
we have ${}_{(S^1)'}\langle f,\sigma_k\rangle_{S^1}\to 0$. Combining \eqref{eq_ener_III} and  \eqref{eq_app_BL_III}
\begin{align*}
    \mathcal{E}_{a,0}(\sigma_k)
    &=\frac{1}{2}\|\sigma_k\|_{S^1}^2
    -\frac{1}{2^{\star}}\int_{\mathbb{H}^n}a(\xi)|\sigma_k|^{2^{\star}}\,d\xi\\
    &=\Bigl(\frac{1}{2}\|u_k\|_{S^1}^2
    -\frac{1}{2^{\star}}\int_{\mathbb{H}^n}a(\xi)|u_k|^{2^{\star}}\,d\xi
    -{}_{(S^1)'}\langle f,u_k\rangle_{S^1}\Bigr)\\
    &\quad-\Bigl(\frac{1}{2}\|u^{(0)}\|_{S^1}^2
    -\frac{1}{2^{\star}}\int_{\mathbb{H}^n}a(\xi)|u^{(0)}|^{2^{\star}}\,d\xi
    -{}_{(S^1)'}\langle f,u^{(0)}\rangle_{S^1}\Bigr)+o(1)\\
    &=\mathcal{E}_{a,f}(u_k)-\mathcal{E}_{a,f}(u^{(0)})+o(1)
    \longrightarrow c-\mathcal{E}_{a,f}(u^{(0)}).
\end{align*}
To show that the derivative vanishes, let $\varphi\in S^1(\mathbb{H}^n)$. Since $u^{(0)}$ solves \eqref{eqn_pert_Hn} (Step~II), we have ${}_{(S^1)'}\left\langle\mathcal{E}_{a,f}'(u^{(0)}),\varphi\right\rangle_{S^1}=0$. Therefore, we can expand the derivative as follows:
\begin{align*}
    {}_{(S^1)'}\left\langle \mathcal{E}_{a,0}'(\sigma_k),\varphi\right\rangle_{S^1}
    &=\langle u_k,\varphi\rangle_{S^1}
    -\int_{\mathbb{H}^n}a(\xi)|u_k|^{2^{\star}-2}u_k\,\varphi\,d\xi
    -{}_{(S^1)'}\langle f,\varphi\rangle_{S^1}\\
    &\quad-\langle u^{(0)},\varphi\rangle_{S^1}
    +\int_{\mathbb{H}^n}a(\xi)|u^{(0)}|^{2^{\star}-2}u^{(0)}\,\varphi\,d\xi
    +{}_{(S^1)'}\langle f,\varphi\rangle_{S^1}\\
    &\quad+\int_{\mathbb{H}^n}a(\xi)\Bigl\{|u_k|^{2^{\star}-2}u_k
    -|u^{(0)}|^{2^{\star}-2}u^{(0)}
    -|\sigma_k|^{2^{\star}-2}\sigma_k\Bigr\}\varphi\,d\xi\\
    &=o(1)
    +\int_{\mathbb{H}^n}a(\xi)\Bigl\{|u_k|^{2^{\star}-2}u_k
    -|u^{(0)}|^{2^{\star}-2}u^{(0)}
    -|\sigma_k|^{2^{\star}-2}\sigma_k\Bigr\}\varphi\,d\xi,
\end{align*}
where we used the fact that ${}_{(S^1)'}\left\langle\mathcal{E}_{a,f}'(u_k),\varphi\right\rangle_{S^1} =o(1)$ by \eqref{exp_E,a,f_prime}, and ${}_{(S^1)'}\left\langle\mathcal{E}_{a,f}'(u^{(0)}),\varphi\right\rangle_{S^1}=0$. By Lemma~\ref{lem_Brezis_Lieb_nonlin}, it directly follows that for all $\varphi\in S^1(\mathbb{H}^n)$,
\[
    \int_{\mathbb{H}^n}a(\xi)\Bigl\{|u_k|^{2^{\star}-2}u_k
    -|u^{(0)}|^{2^{\star}-2}u^{(0)}
    -|\sigma_k|^{2^{\star}-2}\sigma_k\Bigr\}\varphi\,d\xi=o(1).
\]
Consequently, we obtain $\left\|\mathcal{E}_{a,0}'(\sigma_k)\right\|_{(S^1)'} \to 0$, for $k\to \infty$, which completes Step~III.  

\smallskip\noindent
\textbf{Step IV: Concentration and rescaling.}

\smallskip
From Step~III, $\{\sigma_k\}$ is a $(PS)$ sequence for $\mathcal{E}_{a,0}$ 
with $\sigma_k\rightharpoonup 0$ weakly in $S^1(\mathbb{H}^n)$. If 
$\sigma_k\to 0$ strongly in $S^1(\mathbb{H}^n)$, the theorem holds with 
$m=0$ and we are done. Henceforth we assume $\sigma_k\not\to 0$ strongly in 
$S^1(\mathbb{H}^n)$.

Testing ${}_{(S^1)'}\left\langle\mathcal{E}_{a,0}'(\sigma_k),\sigma_k\right\rangle_{S^1}
=o(1)$ against $\sigma_k$ gives
\[
    \|\sigma_k\|_{S^1}^2
    =\int_{\mathbb{H}^n}a(\xi)|\sigma_k|^{2^{\star}}\,d\xi+o(1).
\]
If $\sigma_k\to 0$ in $L^{2^{\star}}(\mathbb{H}^n)$, the right-hand side vanishes 
and forces $\|\sigma_k\|_{S^1}\to 0$, contradicting the assumption. Hence, 
up to a subsequence,
\begin{equation}\label{eq:infimum_bound}
    \inf_k\|\sigma_k\|_{L^{2^{\star}}(\mathbb{H}^n)}\geq C_0>0.
\end{equation}

Moreover, by Lemma~\ref{morey-sblv} with $p=2$, the continuous embedding $L^{2^{\star}}(\mathbb{H}^n)\hookrightarrow\mathcal{M}^{r,r(Q-2)/2}(\mathbb{H}^n)$ holds for any fixed $1\leq r<2^{\star}$. Combined with the uniform $S^1$-bound of $\{\sigma_k\}$, this yields 
\begin{equation}\label{eq:morrey_upper}
    \sup_k\|\sigma_k\|_{\mathcal{M}^{r,r(Q-2)/2}(\mathbb{H}^n)}\leq C_1<\infty.
\end{equation}
For the lower bound, applying the improved Sobolev interpolation inequality Theorem~\ref{thm_hardy_inter} with $p=2$ and 
$\theta\in[2/2^{\star},1)$ gives
\[
    \|\sigma_k\|_{L^{2^{\star}}}
    \leq C\|\sigma_k\|_{S^1}^{\theta}
    \|\sigma_k\|_{\mathcal{M}^{r,r(Q-2)/2}}^{1-\theta}.
\]
Since $\|\sigma_k\|_{S^1}\leq M$ and $\|\sigma_k\|_{L^{2^{\star}}}\geq C_0>0$ by 
\eqref{eq:infimum_bound}, rearranging yields
\begin{equation}\label{eq:morrey_lower}
    \inf_k\|\sigma_k\|_{\mathcal{M}^{r,r(Q-2)/2}(\mathbb{H}^n)}\geq C_2>0.
\end{equation}
Together, \eqref{eq:morrey_upper} and \eqref{eq:morrey_lower} give 
$0<C_2\leq\|\sigma_k\|_{\mathcal{M}^{r,r(Q-2)/2}}\leq C_1<\infty$ for all 
$k$. By Definition~\ref{def_mor}, for each $k\in\mathbb{N}$ there 
exist $\lambda_k^{(1)}>0$ and $\xi_k^{(1)}\in\mathbb{H}^n$ such that  
\begin{equation}\label{eq:concentration}
    \frac{(\lambda_k^{(1)})^{r(Q-2)/2}}{|B(\xi_k^{(1)},\lambda_k^{(1)})|}
    \int_{B(\xi_k^{(1)},\lambda_k^{(1)})}|\sigma_k(\xi)|^r\,d\xi
    \geq\frac{C_2^r}{2}=C_3>0,
\end{equation}
uniformly in $k$.

We claim that $|\log\lambda_k^{(1)}|+\|\xi_k^{(1)}\|\to\infty$ up to a 
subsequence. Suppose for contradiction that the sequence $|\log\lambda_k^{(1)}|+\|\xi_k^{(1)}\|$  does not diverge to infinity. Then, passing to a subsequence, there exist positive constants  $0 < c \leq C < \infty$ such that $\lambda_k^{(1)} \in [c, C]$ and $\|\xi_k^{(1)}\| \leq C$  for all $k$. So any $\xi \in B(\xi_k^{(1)}, \lambda_k^{(1)})$  satisfies $\|\xi\| \leq \|\xi_k^{(1)}\| + \lambda_k^{(1)} \leq 2C$. Choosing a fixed radius $R:= 2C+1$, it follows that the domains of concentration are uniformly  contained in a single large ball
\[
    B(\xi_k^{(1)}, \lambda_k^{(1)}) \subset B(e, R) \quad \text{for all } k.
\]
Using the concentration assumption \eqref{eq:concentration} and the fact that the scales are strictly bounded away from zero ($\lambda_k^{(1)} \geq c$), we bound the integral from below
\begin{align}\label{eq_Loc_Lr_sig>}
    \int_{B(e, R)}|\sigma_k(\xi)|^r\,d\xi 
    \geq \int_{B(\xi_k^{(1)}, \lambda_k^{(1)})}|\sigma_k(\xi)|^r\,d\xi 
    \geq{ C_3 \frac{|B(\xi_k^{(1)}, \lambda_k^{(1)})|}{\left(\lambda_k^{(1)}\right)^{r\frac{(Q-2)}{2}}}}  
    &={ C_3 (\lambda_k^{(1)})^{Q-r\frac{(Q-2)}{2}} |B(e,1)|} =: \tilde{C} > 0.
\end{align}
However, the sequence $\{\sigma_k\}$ satisfies $\sigma_k \rightharpoonup 0$ weakly in  $S^1(\mathbb{H}^n)$.  The 
subelliptic Rellich--Kondrachov theorem then gives ${\sigma}_k\to 0$ strongly in $L^r_{\mathrm{loc}}(\mathbb{H}^n)$, 
contradicting \eqref{eq_Loc_Lr_sig>}. Therefore, our assumption that the sequence of centers and log-scales remains bounded must be false, establishing the claim that
\begin{align}\label{claim_log+||}
    |\log\lambda_k^{(1)}|+\|\xi_k^{(1)}\|\to\infty \qquad \text{ as } k\to \infty.
\end{align}
Motivated by the Jerison--Lee operator 
$D_{\eta,\lambda}u(\xi)=\lambda^{-\frac{Q-2}{2}}u\left(\delta_{\frac{1}{\lambda}}(\eta^{-1}\circ\xi)\right)$ from Theorem~\ref{thm_JL_bubble}, we define the rescaled sequence
\begin{equation}\label{eq:rescaled_unified}
    \tilde{\sigma}_k(\eta)
    :=(\lambda_k^{(1)})^{\frac{Q-2}{2}}
    \sigma_k\!\left(\xi_k^{(1)}\circ\delta_{\lambda_k^{(1)}}(\eta)\right).
\end{equation}
By the scale invariance of the $S^1$-norm, we have
\[
    \|\tilde{\sigma}_k\|_{S^1}
    =\Bigl\|(\lambda_k^{(1)})^{\frac{Q-2}{2}}
    \sigma_k(\xi_k^{(1)}\circ\delta_{\lambda_k^{(1)}}(\,\cdot\,))
    \Bigr\|_{S^1}
    =\|\sigma_k\|_{S^1}\leq M\,\text{ for all }k\in\mathbb{N}.
\]
Since $\{\tilde{\sigma}_k\}$ is bounded in $S^1(\mathbb{H}^n)$, the Banach--Alaoglu theorem and the subelliptic Rellich--Kondr\\-achov theorem (Remark~\ref{rem_ae_convergence}) guarantee that, up to a subsequence, there exists $u^{(1)}\in S^1(\mathbb{H}^n)$ such that
\[
    \tilde{\sigma}_k\rightharpoonup u^{(1)}
    \quad\text{weakly in }S^1(\mathbb{H}^n),
    \qquad
    \tilde{\sigma}_k\to u^{(1)}
    \quad\text{in }L^r_{\mathrm{loc}}(\mathbb{H}^n)\quad(1\leq r<2^{\star}).
\]
Consequently, the extracted bubble in the decomposition \eqref{eq_seq_decomp} takes the form
\[
    (\lambda_k^{(1)})^{-\frac{Q-2}{2}}
    u^{(1)}\!\left(\delta_{1/\lambda_k^{(1)}}
    \!\left((\xi_k^{(1)})^{-1}\circ\xi\right)\right)
    = D_{\xi_k^{(1)},\lambda_k^{(1)}}u^{(1)}(\xi).
\]

Moreover, \eqref{eq:concentration} gives us 
\[
    0<C_3
    \leq\frac{1}{|B(e,1)|}
    \int_{B(e,1)}|\tilde{\sigma}_k(\eta)|^r\,d\eta
    \longrightarrow
    \frac{1}{|B(e,1)|}
    \int_{B(e,1)}|u^{(1)}(\eta)|^r\,d\eta,
\]
so $u^{(1)}\not\equiv 0$.  

\smallskip\noindent
\textbf{Step V: The rescaled limit $u^{(1)}$ solves the limiting equation
\eqref{eq:limiting_eq_j}.}

\smallskip
We recall from Step~IV the rescaled sequence \eqref{eq:rescaled_unified},
\[
    \tilde{\sigma}_k(\eta)
    =(\lambda_k^{(1)})^{\frac{Q-2}{2}}
    \sigma_k\!\left(\xi_k^{(1)}\circ\delta_{\lambda_k^{(1)}}(\eta)\right),
\]
where $|\log\lambda_k^{(1)}| +\|\xi_{k}^{(1)}\|  \rightarrow \infty$,
$u^{(1)}\not\equiv 0$ established there. We show that $u^{(1)}$ is a nontrivial weak solution to \eqref{eq:limiting_eq_j}.
Let us fix $\varphi\in C_c^\infty(\mathbb{H}^n)$. Since 
$\tilde{\sigma}_k\rightharpoonup u^{(1)}$ weakly in $S^1(\mathbb{H}^n)$,
\[
    \langle u^{(1)},\varphi\rangle_{S^1}
    =\lim_{k\to\infty}\langle\tilde{\sigma}_k,\varphi\rangle_{S^1}.
\]
We define the rescaled test function
\begin{equation}\label{eq:rescaled_test}
    \varphi_k(\xi)
    :=D_{\xi_k^{(1)},\,\lambda_k^{(1)}}\varphi(\xi)
    =(\lambda_k^{(1)})^{-\frac{Q-2}{2}}
    \varphi\!\left(\delta_{1/\lambda_k^{(1)}}
    \!\left((\xi_k^{(1)})^{-1}\circ\xi\right)\right),
\end{equation}
which satisfies $\|\varphi_k\|_{S^1}=\|\varphi\|_{S^1}$. We claim $\varphi_k\rightharpoonup 0$ weakly in 
$S^1(\mathbb{H}^n)$: indeed, by Lemma~\ref{pp-lemma-3}, 
$D_{\xi_k^{(1)},1/\lambda_k^{(1)}}v\rightharpoonup 0$ for all 
$v\in S^1(\mathbb{H}^n)$ if and only if 
$|\log(1/\lambda_k^{(1)})|+\|\xi_k^{(1)}\|
=|\log\lambda_k^{(1)}|+\|\xi_k^{(1)}\|\to\infty$, which holds by 
\eqref{claim_log+||}.

By the $S^1(\hn)$ inner product invariance and the 
change of variables,
\begin{equation}\label{eq:inner_prod_identity}
    \langle\tilde{\sigma}_k,\varphi\rangle_{S^1}
    =\langle\sigma_k,\varphi_k\rangle_{S^1}.
\end{equation}
Since ${}_{(S^1)'}\left\langle\mathcal{E}_{a,0}'(\sigma_k),\varphi_k\right\rangle_{S^1}
=o(1)$ from Step~III, we have
\[
    \langle\sigma_k,\varphi_k\rangle_{S^1}
    =\int_{\mathbb{H}^n}a(\xi)|\sigma_k(\xi)|^{2^{\star}-2}\sigma_k(\xi)\,
    \varphi_k(\xi)\,d\xi+o(1).
\]
Applying the change of variables 
$\xi=\xi_k^{(1)}\circ\delta_{\lambda_k^{(1)}}(\eta)$ with Jacobian 
$(\lambda_k^{(1)})^Q$, and substituting 
the definitions of $\tilde{\sigma}_k$ and $\varphi_k$,
gives us
\begin{equation}\label{eq:step5_main}
    \langle\tilde{\sigma}_k,\varphi\rangle_{S^1}
    =\int_{\mathbb{H}^n}
    a_k(\eta)
    |\tilde{\sigma}_k(\eta)|^{2^{\star}-2}\tilde{\sigma}_k(\eta)\,
    \varphi(\eta)\,d\eta+o(1),
\end{equation}
where $a_k(\eta):=a(\xi_k^{(1)}\circ\delta_{\lambda_k^{(1)}}(\eta))$.
To pass to the limit, we estimate
\begin{align*}
    &\left|\int_{\mathbb{H}^n}
    a_k(\eta)|\tilde{\sigma}_k|^{2^{\star}-2}\tilde{\sigma}_k\,\varphi\,d\eta
    -a_\infty^{(1)}\int_{\mathbb{H}^n}
    |u^{(1)}|^{2^{\star}-2}u^{(1)}\,\varphi\,d\eta\right|
    \leq I_k+J_k,
\end{align*}
where
\begin{align*}
    I_k:=\left|\int_{\mathbb{H}^n}
    a_k(\eta)\!\left(|\tilde{\sigma}_k|^{2^{\star}-2}\tilde{\sigma}_k
    -|u^{(1)}|^{2^{\star}-2}u^{(1)}\right)\varphi\,d\eta\right|, \text{ and }
    J_k:=\left|\int_{\mathbb{H}^n}
    \!\left(a_k(\eta)-a_\infty^{(1)}\right)
    |u^{(1)}|^{2^{\star}-2}u^{(1)}\,\varphi\,d\eta\right|.
\end{align*}
For $J_k$, thanks to \eqref{eq:param_behavior}, we observe that $a_{k}(\eta) \to a_{\infty}^{(1)}$ almost everywhere by definition \eqref{crucial_mistake}. Therefore, applying the Dominated Convergence Theorem, we conclude that $J_k \to 0$ as $k \to \infty$. 

For $I_k$, since $\tilde{\sigma}_k\to u^{(1)}$ a.e.\ in $\mathbb{H}^n$ 
(Remark~\ref{rem_ae_convergence}), $\{\tilde{\sigma}_k\}$ is uniformly bounded 
in $L^{2^{\star}}(\mathbb{H}^n)$ and the Folland–Stein-Sobolev embedding, the compact support of $\varphi$, and the Vitali convergence theorem 
give $I_k\to 0$. Combining these observations, \eqref{eq:step5_main} yields
\[
    \langle u^{(1)},\varphi\rangle_{S^1}
    =a_\infty^{(1)}\int_{\mathbb{H}^n}
    |u^{(1)}|^{2^{\star}-2}u^{(1)}\,\varphi\,d\eta
\]
for all $\varphi\in C_c^\infty(\mathbb{H}^n)$. Since $C_c^\infty(\mathbb{H}^n)$ 
is dense in $S^1(\mathbb{H}^n)$, this extends to all $\varphi\in S^1(\mathbb{H}^n)$, 
so $u^{(1)}$ is a nontrivial weak solution to \eqref{eq:limiting_eq_j} with 
$j=1$, completing Step~V.

\smallskip\noindent
\textbf{Step VI: $\{v_k\}$ is a $(PS)$ sequence for $\mathcal{E}_{a,0}$ at 
a strictly lower energy level.}

\smallskip
We recall the rescaled sequence $\{\tilde{\sigma}_k\}$ defined in \eqref{eq:rescaled_unified} and the nontrivial weak solution $u^{(1)}$ to \eqref{eq:limiting_eq_j} extracted in Step~V. We define the remainder sequence by
\begin{equation*}
    v_k(\xi) 
    := \sigma_k(\xi) - D_{\xi_k^{(1)},\lambda_k^{(1)}}u^{(1)}(\xi)
    =\sigma_k(\xi) 
    - (\lambda_k^{(1)})^{-\frac{Q-2}{2}}
    u^{(1)}\!\left(\delta_{1/\lambda_k^{(1)}}
    \!\left((\xi_k^{(1)})^{-1}\circ\xi\right)\right).
\end{equation*}
The corresponding rescaled remainder is given by
\begin{equation}\label{eq:vk_rescaled}
    \hat{v}_k(\eta) 
    := (\lambda_k^{(1)})^{\frac{Q-2}{2}}
    v_k\!\left(\xi_k^{(1)}\circ\delta_{\lambda_k^{(1)}}(\eta)\right)
    = \tilde{\sigma}_k(\eta) - u^{(1)}(\eta),
\end{equation}
and by the scale invariance of the $S^1$-norm, we obtain
\begin{equation}\label{eq:vk_norm}
    \|v_k\|_{S^1(\mathbb{H}^n)} 
    = \|\hat{v}_k\|_{S^1(\mathbb{H}^n)} 
    = \|\tilde{\sigma}_k - u^{(1)}\|_{S^1(\mathbb{H}^n)}.
\end{equation}
Furthermore, we note that $v_k\rightharpoonup 0$ weakly in $S^1(\mathbb{H}^n)$. Indeed, $\sigma_k\rightharpoonup 0$ weakly by Step~III, and the parameter condition \eqref{claim_log+||} combined with Lemma~\ref{pp-lemma-3} ensures that the extracted bubble satisfies $D_{\xi_k^{(1)},\lambda_k^{(1)}}u^{(1)}\rightharpoonup 0$ weakly in $S^1(\mathbb{H}^n)$. 

\smallskip
{\noindent\textbf{Brezis--Lieb for the rescaled nonlinearity.} Recall $a_k(\eta)=a(\xi_k^{(1)}\circ\delta_{\lambda_k^{(1)}}(\eta))$ and due to \eqref{eq:param_behavior}, we have $a_k(\eta)\to a_\infty^{(1)}$ for a.e.\ $\eta$. Now using Brezis–Lieb lemma, we have
\begin{equation}\label{eq:BL_rescaled_1}
    \int_{\mathbb{H}^n} a_k(\eta)|\tilde{\sigma}_k(\eta)-u^{(1)}(\eta)|^{2^{\star}}
    \,d\eta
    = \int_{\mathbb{H}^n} a_k(\eta)|\tilde{\sigma}_k(\eta)|^{2^{\star}}\,d\eta
    - \int_{\mathbb{H}^n} a_{k}(\eta)|u^{(1)}(\eta)|^{2^{\star}}\,d\eta
    + o(1).
\end{equation}
Also by the the Dominated Convergence Theorem we obtain
\begin{equation}\label{eq:BL_rescaled_2}
\int_{\mathbb{H}^n}a_k(\eta)|u^{(1)}(\eta)|^{2^{\star}}\,d\eta
    \to\int_{\mathbb{H}^n}a_\infty^{(1)}|u^{(1)}(\eta)|^{2^{\star}}\,d\eta.    
\end{equation}
Combining \eqref{eq:BL_rescaled_1} and \eqref{eq:BL_rescaled_2}, we deduce
\begin{equation}\label{eq:BL_rescaled}
    \int_{\mathbb{H}^n} a_k(\eta)|\tilde{\sigma}_k(\eta)-u^{(1)}(\eta)|^{2^{\star}}
    \,d\eta
    = \int_{\mathbb{H}^n} a_k(\eta)|\tilde{\sigma}_k(\eta)|^{2^{\star}}\,d\eta
    - \int_{\mathbb{H}^n} a_\infty^{(1)}|u^{(1)}(\eta)|^{2^{\star}}\,d\eta
    + o(1).
\end{equation}}

\smallskip
\noindent\textbf{Energy identity for $\{v_k\}$.}
We compute $\mathcal{E}_{a,0}(v_k)$ by passing through the rescaled 
variables. By \eqref{eq:vk_norm} and the change of variables 
$\xi=\xi_k^{(1)}\circ\delta_{\lambda_k^{(1)}}(\eta)$,  
gives us
\begin{align*}
    \mathcal{E}_{a,0}(v_k)
    &= \frac{1}{2}\|v_k\|_{S^1}^2
    - \frac{1}{2^{\star}}\int_{\mathbb{H}^n}a(\xi)|v_k(\xi)|^{2^{\star}}\,d\xi \\
    &= \frac{1}{2}\|\tilde{\sigma}_k-u^{(1)}\|_{S^1}^2
    - \frac{1}{2^{\star}}\int_{\mathbb{H}^n}
    a_k(\eta)|\tilde{\sigma}_k(\eta)-u^{(1)}(\eta)|^{2^{\star}}\,d\eta.
\end{align*}
For the norm, expanding and using $\tilde{\sigma}_k\rightharpoonup u^{(1)}$ 
weakly in $S^1(\mathbb{H}^n)$ gives 
$\langle\tilde{\sigma}_k,u^{(1)}\rangle_{S^1}\to\|u^{(1)}\|_{S^1}^2$, 
so $\|\tilde{\sigma}_k-u^{(1)}\|_{S^1}^2
=\|\tilde{\sigma}_k\|_{S^1}^2-\|u^{(1)}\|_{S^1}^2+o(1)$.
For the integral, we apply  \eqref{eq:BL_rescaled}. Combining,
\begin{align*}
    \mathcal{E}_{a,0}(v_k)
    &= \frac{1}{2}\!\left(\|\tilde{\sigma}_k\|_{S^1}^2
    - \|u^{(1)}\|_{S^1}^2\right)
    - \frac{1}{2^{\star}}\int_{\mathbb{H}^n}
    a_k|\tilde{\sigma}_k|^{2^{\star}}\,d\eta
    + \frac{1}{2^{\star}}\int_{\mathbb{H}^n}
    a_\infty^{(1)}|u^{(1)}|^{2^{\star}}\,d\eta
    + o(1).
\end{align*}
Using
$\|\tilde{\sigma}_k\|_{S^1}=\|\sigma_k\|_{S^1}$, and reversing the change 
of variables converts 
$\int a_k|\tilde{\sigma}_k|^{2^{\star}}\,d\eta
=\int a(\xi)|\sigma_k|^{2^{\star}}\,d\xi$. Therefore,
\begin{align*}
    \mathcal{E}_{a,0}(v_k)
    &= \frac{1}{2}\|\sigma_k\|_{S^1}^2
    - \frac{1}{2^{\star}}\int_{\mathbb{H}^n}a(\xi)|\sigma_k|^{2^{\star}}\,d\xi
    - \left(\frac{1}{2}\|u^{(1)}\|_{S^1}^2
    - \frac{1}{2^{\star}}\int_{\mathbb{H}^n}
    a_\infty^{(1)}|u^{(1)}|^{2^{\star}}\,d\eta\right)
    + o(1).
\end{align*}

Since $u^{(1)}$ solves \eqref{eq:limiting_eq_j}, the 
substitution $(a_\infty^{(1)})^{\frac{Q-2}{4}}u^{(1)}$ solves \eqref{hom_eqn}, 
and a direct rescaling computation gives
\begin{align*}
    \frac{1}{2}\|u^{(1)}\|_{S^1}^2
    - \frac{1}{2^{\star}}\int_{\mathbb{H}^n}a_\infty^{(1)}|u^{(1)}|^{2^{\star}}\,d\eta
    = (a_\infty^{(1)})^{-\frac{Q-2}{2}}\,\mathcal{E}_{1,0}
    \!\left((a_\infty^{(1)})^{\frac{Q-2}{4}}u^{(1)}\right).
\end{align*}
Therefore,
\begin{equation}\label{eq:energy_vk}
\begin{aligned}
    \mathcal{E}_{a,0}(v_k)
    &= \mathcal{E}_{a,0}(\sigma_k)
    - (a_\infty^{(1)})^{-\frac{Q-2}{2}}\,
    \mathcal{E}_{1,0}\!\left((a_\infty^{(1)})^{\frac{Q-2}{4}}u^{(1)}\right)
    + o(1)\\
    &    \longrightarrow
    c - \mathcal{E}_{a,f}(u^{(0)})
    - (a_\infty^{(1)})^{-\frac{Q-2}{2}}\,
    \mathcal{E}_{1,0}\!\left((a_\infty^{(1)})^{\frac{Q-2}{4}}u^{(1)}\right),
\end{aligned}
\end{equation}
where we used the fact $\mathcal{E}_{a,0}(\sigma_k)\to c-\mathcal{E}_{a,f}(u^{(0)})$ 
from Step~III. Since $(a_\infty^{(1)})^{\frac{Q-2}{4}}u^{(1)}$ is a nontrivial 
solution of \eqref{hom_eqn}, its energy satisfies
\[
    \mathcal{E}_{1,0}\!\left((a_\infty^{(1)})^{\frac{Q-2}{4}}u^{(1)}\right)
    \geq\mathcal{E}_{1,0}(U)
    =\frac{1}{Q}\mathcal{S}_{Q}^{Q/2}>0,
\]
where $U$ is the Jerison--Lee bubble \eqref{eq_J_L_extreme} of minimum energy. 
Hence, the energy drops by a fixed positive amount:
\begin{equation*}
    \mathcal{E}_{a,0}(v_k) 
    \leq \mathcal{E}_{a,0}(\sigma_k) 
    - (a_\infty^{(1)})^{-\frac{Q-2}{2}}\cdot\frac{1}{Q}\mathcal{S}_{Q}^{Q/2}
    + o(1).
\end{equation*}

\smallskip

\noindent\textbf{$(PS)$ property for $\{v_k\}$.}
Fix $\varphi\in C_c^\infty(\mathbb{H}^n)$ and recall the rescaled test function $\varphi_k$ from \eqref{eq:rescaled_test},
which satisfies $\|\varphi_k\|_{S^1}=\|\varphi\|_{S^1}$. By Lemma~\ref{pp-lemma-3} and 
\eqref{claim_log+||},
we have $\varphi_k\rightharpoonup 0$ weakly in $S^1(\mathbb{H}^n)$, and in 
particular $\varphi_k\to 0$ a.e.\ up to a subsequence. Using 
\eqref{eq:vk_rescaled} and by change of variables, we can write
\begin{align}
     {}_{(S^1)'}\left\langle\mathcal{E}_{a,0}'(v_k),\varphi\right\rangle_{S^1}
    &= \langle v_k,\varphi\rangle_{S^1}
    - \int_{\mathbb{H}^n}a(\xi)|v_k|^{2^{\star}-2}v_k\,\varphi\,d\xi 
    \nonumber \\
    &= \langle\hat{v}_k,\varphi_k\rangle_{S^1}
    - \int_{\mathbb{H}^n}
    a_k(\eta)|\hat{v}_k|^{2^{\star}-2}\hat{v}_k\,\varphi_k\,d\eta 
    \nonumber \\
    &= \langle\tilde{\sigma}_k-u^{(1)},\varphi_k\rangle_{S^1}
    - \int_{\mathbb{H}^n}
    a_k|\tilde{\sigma}_k-u^{(1)}|^{2^{\star}-2}
    (\tilde{\sigma}_k-u^{(1)})\,\varphi_k\,d\eta.
    \label{eq:dE_vk}
\end{align}
Next, we claim
\begin{align}\label{eq:claim2}
  \begin{aligned}
       &\int_{\mathbb{H}^n}a_k|\tilde{\sigma}_k-u^{(1)}|^{2^{\star}-2}
    (\tilde{\sigma}_k-u^{(1)})\,\varphi_k\,d\eta\\
      &\hspace{1cm}  = \int_{\mathbb{H}^n}a_k|\tilde{\sigma}_k|^{2^{\star}-2}\tilde{\sigma}_k\,
    \varphi_k\,d\eta
    - \int_{\mathbb{H}^n}a_k|u^{(1)}|^{2^{\star}-2}u^{(1)}\,\varphi_k\,d\eta
    + o(1).
  \end{aligned}
\end{align}
Indeed, by the pointwise inequality: for $p>1$ and $s,t\in\mathbb{R}$,
\begin{equation*}
    \bigl||s+t|^{p-1}(s+t)-|s|^{p-1}s-|t|^{p-1}t\bigr|
    \leq C\bigl(|s|^{p-1}|t|+|s||t|^{p-1}\bigr),
\end{equation*}
and applying $p=2^{\star}-1$, one gets
\begin{multline*}
    \left|a_k|\tilde{\sigma}_k|^{2^{\star}-2}\tilde{\sigma}_k\,
    \varphi_k-a_k|\tilde{\sigma}_k-u^{(1)}|^{2^{\star}-2}
    (\tilde{\sigma}_k-u^{(1)})\,\varphi_k-a_k|u^{(1)}|^{2^{\star}-2}u^{(1)}\,\varphi_k\right|\\ \leq C\|a\|_{L^\infty}
    \Bigl(|\tilde{\sigma}_k-u^{(1)}|^{2^{\star}-2}|u^{(1)}|
    +|\tilde{\sigma}_k-u^{(1)}||u^{(1)}|^{2^{\star}-2}\Bigr)|\varphi_k|.
\end{multline*}
Now, we will be through if we establish
\[
\text{(i)}\int_{\hn}|\tilde{\sigma}_k-u^{(1)}|^{2^{\star}-2}|u^{(1)}|\, |\varphi_k| \,d\eta= o(1),
    \qquad
    \text{(ii)}\ \int_{\mathbb{H}^n}
    |\tilde{\sigma}_k-u^{(1)}||u^{(1)}|^{2^{\star}-2}\,|\varphi|\,d\eta=o(1).
\]
To see $(i)$,
\begin{align*}
     \int_{\hn}|\tilde{\sigma}_k-u^{(1)}|^{2^{\star}-2}|u^{(1)}||\varphi_k| \,d\eta= \int_{B(e,R)}|\tilde{\sigma}_k-u^{(1)}|^{2^{\star}-2}|u^{(1)}||\varphi_k| \,d\eta+\int_{\hn\setminus B(e,R)}|\tilde{\sigma}_k-u^{(1)}|^{2^{\star}-2}|u^{(1)}||\varphi_k| \,d\eta. 
\end{align*} 
Let $\varepsilon>0$ be given. Here in the above $R\equiv R(\varepsilon)>0$ is chosen so that $\int_{\hn\setminus B(e,R)}|u^{(1)}|^{2^{\star}}\,d\eta<\varepsilon.$ Now using the above and H\"older's inequality along-with Folland-Stein-Sobolev inequality
\begin{multline*}
     \int_{\hn\setminus B(e,R)}|\tilde{\sigma}_k-u^{(1)}|^{2^{\star}-2}|u^{(1)}||\varphi_k| \,d\eta \\ \leq  \left(\int_{\hn}|\tilde{\sigma}_k-u^{(1)}|^{2^{\star}}d\eta\right)^{\frac{2^{\star}-2}{2^{\star}}}\left(\int_{\hn\setminus B(e,R)}|u^{(1)}|^{2^{\star}}\,d\eta\right)^{\frac{1}{2^{\star}}}\|\varphi\|_{L^{2^{\star}}} =o(1).
\end{multline*}
Now using the fact that $\varphi_k\to 0$ a.e. on $\hn$ and Vitali's convergence theorem, we get
\begin{align*}
    \int_{B(e,R)}|\tilde{\sigma}_k-u^{(1)}|^{2^{\star}-2}|u^{(1)}||\varphi_k| \,d\eta = o(1).
\end{align*}
Similarly, $(ii)$ follows. This establishes \eqref{eq:claim2}.

Since $\|a_k\|_{L^\infty}=\|a\|_{L^\infty}$, 
$\|\varphi_k\|_{L^{2^{\star}}}=\|\varphi\|_{L^{2^{\star}}}$, and $\varphi_k\to 0$ a.e.,  H\"{o}lder's inequality and the Dominated Convergence Theorem give us
\[
    \lim_{k\to\infty}
    \int_{\mathbb{H}^n}a_k|u^{(1)}|^{2^{\star}-2}u^{(1)}\,\varphi_k\,d\eta = 0.
\]
Applying the change of variables again, we go back to the original coordinates
\[
    \int_{\mathbb{H}^n}a_k|\tilde{\sigma}_k|^{2^{\star}-2}\tilde{\sigma}_k\,
    \varphi_k\,d\eta
    = \int_{\mathbb{H}^n}a(\xi)|\sigma_k|^{2^{\star}-2}\sigma_k\,\varphi\,d\xi.
\]
Substituting the above expression into \eqref{eq:claim2} and then into \eqref{eq:dE_vk}, and 
using \eqref{eq:inner_prod_identity} in the form 
$\langle\tilde{\sigma}_k,\varphi_k\rangle_{S^1}
=\langle\sigma_k,\varphi\rangle_{S^1}$ together with 
$\langle u^{(1)},\varphi_k\rangle_{S^1}\to 0$ (since $\varphi_k\rightharpoonup 0$ 
and $u^{(1)}\in S^1(\mathbb{H}^n)$), we obtain
\[
     {}_{(S^1)'}\left\langle\mathcal{E}_{a,0}'(v_k),\varphi\right\rangle_{S^1}
    = \langle\sigma_k,\varphi\rangle_{S^1}
    - \int_{\mathbb{H}^n}a(\xi)|\sigma_k|^{2^{\star}-2}\sigma_k\,\varphi\,d\xi
    + o(1)
    = {}_{(S^1)'}\left\langle\mathcal{E}_{a,0}'(\sigma_k),\varphi\right\rangle_{S^1} + o(1)
    = o(1),
\]
where the last equality uses $\mathcal{E}_{a,0}'(\sigma_k)\to 0$ from 
Step~III. By density this extends to all $\varphi\in S^1(\mathbb{H}^n)$, 
giving $\mathcal{E}_{a,0}'(v_k)\to 0$ in $(S^1(\mathbb{H}^n))'$.

\smallskip

\noindent\textbf{Conclusion.}
Combined with \eqref{eq:energy_vk}, $\{v_k\}$ is a $(PS)$ sequence for 
$\mathcal{E}_{a,0}$ with $v_k\rightharpoonup 0$, at the energy level
\[
    c - \mathcal{E}_{a,f}(u^{(0)}) 
    - (a_\infty^{(1)})^{-\frac{Q-2}{2}}\,
    \mathcal{E}_{1,0}\!\left((a_\infty^{(1)})^{\frac{Q-2}{4}}u^{(1)}\right),
\]
which is strictly lower than the level $c-\mathcal{E}_{a,f}(u^{(0)})$ of 
$\{\sigma_k\}$ by the fixed positive amount
\[
    (a_\infty^{(1)})^{-\frac{Q-2}{2}}\,
    \mathcal{E}_{1,0}\!\left((a_\infty^{(1)})^{\frac{Q-2}{4}}u^{(1)}\right)
    \geq (a_\infty^{(1)})^{-\frac{Q-2}{2}}
    \cdot\frac{1}{Q}\mathcal{S}_{Q}^{Q/2}>0.
\]
Since $\{v_k\}$ is again a $(PS)$ sequence for $\mathcal{E}_{a,0}$ with 
$v_k\rightharpoonup 0$, it has precisely the same structure as $\{\sigma_k\}$ 
and Steps~IV--VI may be applied iteratively. At the $j$-th iteration, a new parameter $\lambda_k^{(j)}$ satisfying 
$|\log\lambda_k^{(j)}|+\|\xi_k^{(j)}\|\to\infty$ and a new concentration 
point $\xi_k^{(j)}\in\mathbb{H}^n$ are extracted, a new nontrivial weak 
solution $u^{(j)}$ to \eqref{eq:limiting_eq_j} is identified, and the bubble 
$(a_\infty^{(j)})^{-\frac{Q-2}{4}}D_{\xi_k^{(j)},\lambda_k^{(j)}}u^{(j)}$ is subtracted. The energy level drops at each stage by exactly
\begin{align}\label{eq_energy_drop}
  (a_\infty^{(j)})^{-\frac{Q-2}{2}}\cdot\frac{1}{Q}\mathcal{S}_{Q}^{Q/2}>0. 
\end{align}
Since $\sup_k\|\sigma_k\|_{S^1(\mathbb{H}^n)}\leq C<\infty$, the energy 
levels are uniformly bounded below, so the iteration terminates after at most 
$m<\infty$ steps, at which point the remaining $(PS)$ sequence converges 
strongly to zero in $S^1(\mathbb{H}^n)$. The decomposition \eqref{eq_seq_decomp} then reads
\[
    u_k - \left(u^{(0)} + \sum_{j=1}^m 
    (a_\infty^{(j)})^{-\frac{Q-2}{4}}
    D_{\xi_k^{(j)},\lambda_k^{(j)}}u^{(j)}\right) \to 0
    \quad\text{strongly in }S^1(\mathbb{H}^n),
\]
establishing item~\eqref{it_profile_PS_thm} of Theorem~\ref{thm_PS_decomp}. The energy decoupling 
\eqref{eq:energy_decomp} follows by summing \eqref{eq:energy_vk} over all 
$m$ iterations and adding $\mathcal{E}_{a,f}(u^{(0)})$:
\[
    \mathcal{E}_{a,f}(u_k)
    = \mathcal{E}_{a,f}(u^{(0)})
    + \sum_{j=1}^m (a_\infty^{(j)})^{-\frac{Q-2}{2}}
    \mathcal{E}_{1,0}(u^{(j)}) + o(1),
\]
establishing item~\eqref{item_en_dec_PS_thm} of Theorem~\ref{thm_PS_decomp}.

Finally, the asymptotic orthogonality \eqref{eq:orth} is a direct 
consequence of Lemma~\ref{pp-lemma-3}. Indeed, at each iteration the 
rescaled sequence $\hat{v}_k^{(j-1)}$ (obtained by applying 
$D_{\xi_k^{(j)},\lambda_k^{(j)}}$ to the remainder) converges weakly to 
the nontrivial profile $u^{(j)}$, while the remainder itself satisfies 
$v_k^{(j-1)}\rightharpoonup 0$. This forces
\[
    \langle D_{\xi_k^{(i)},\lambda_k^{(i)}}u,\,
    D_{\xi_k^{(j)},\lambda_k^{(j)}}v\rangle_{S^1}\to 0
    \quad\text{for all }u,v\in S^1(\mathbb{H}^n),\quad i\neq j,
\]
which by Lemma~\ref{pp-lemma-3} is equivalent to \eqref{eq:orth}, 
establishing item~\eqref{ite_aymp_ortho} of Theorem~\ref{thm_PS_decomp} and completing 
the proof.
\end{proof} 
\begin{remark}\label{rem_why_not_fractional}
We emphasize that the structural approach used here to establish the Palais-Smale profile decomposition can, in principle, be adapted to fractional sub-Laplacians and more general homogeneous Lie groups. However, the precise energy quantization obtained in \eqref{eq_energy_drop} relies fundamentally on the explicit classification of the critical extremals and the sharp Sobolev constant $\mathcal{S}_Q$. Because these extremals are explicitly characterized only for the local sub-Laplacian on the Heisenberg group (the Jerison--Lee bubbles), the exact minimal energy drop shown in \eqref{eq_energy_drop} cannot currently be explicitly computed in those broader settings.  
\end{remark}

\section{Existence of multiple positive solutions}\label{mul sol cry}
In this section, we synthesize variational geometry and our Palais--Smale profile decomposition to prove Theorem~\ref{thm_multi_pos_sol}: the existence of at least two distinct positive solutions to \eqref{eqn_pert_Hn_1} for a sufficiently small perturbation $f$ satisfying \hyperref[cond_on_f]{\textbf{(F)}}. As a preliminary step, we first recall the strong maximum principle in the Heisenberg setting. In the classical framework, a proof of this result can be found in \cite[Corollary~3.1]{Bo69}. For weak solutions, the conclusion follows via standard arguments relying on the weak Harnack inequality established in \cite[Proposition~2.2]{BGV26}. Indeed, by the Chow--Rashevsky theorem, the space $\mathbb{H}^n$ is connected by horizontal paths, which guarantees that strict positivity propagates unconditionally throughout the entire domain. For the sake of brevity, we omit the detailed proof here.

\begin{proposition}\label{thm_weak_max_prin}
 Let $u\in S^1(\hn)$ be a nonnegative function satisfies
    \[
     \mathcal{L}_{\hn}u (z,t)\geq 0 \quad \text{in } \quad \mathbb{H}^n.
    \]
     Then either $u>0$ a.e. in $\mathbb{H}^n$ or $u= 0$ a.e. in $\mathbb{H}^n$.   
\end{proposition}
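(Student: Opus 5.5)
The argument has three stages: a local weak Harnack inequality, a connectedness argument on the zero set, and a check that the global weak supersolution condition localizes. Throughout, $\mathcal{L}_{\hn}u\geq 0$ is understood weakly: $\int_{\hn}\nabla_{\hn}u\cdot\nabla_{\hn}\varphi\,d\xi\geq 0$ for all nonnegative $\varphi\in S^1(\hn)$, in particular for nonnegative $\varphi\in C_c^\infty(\hn)$.

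\textbf{Stage 1: local tool.} I would invoke the weak Harnack inequality for nonnegative weak supersolutions of $\mathcal{L}_{\hn}$, as in \cite[Proposition~2.2]{BGV26}; it rests on the Moser iteration with the Folland--Stein--Sobolev inequality and the doubling property $|B(\xi,r)|=r^Q|B(e,1)|$. In the form I need: there are $p_0>0$ and $C>0$, depending only on $Q$, such that for every ball $B(\xi_0,4r)$ and every nonnegative weak supersolution $u$ on it,
\[
\Bigl(\frac{1}{|B(\xi_0,2r)|}\int_{B(\xi_0,2r)}u^{p_0}\,d\eta\Bigr)^{1/p_0}\leq C\,\operatorname*{ess\,inf}_{B(\xi_0,r)}u .
\]
Left-invariance of the $X_j$ and the dilation structure make $C$ independent of $\xi_0$ and $r$. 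The consequence I use is that if $u$ vanishes on a subset of $B(\xi_0,r)$ of positive measure, then the essential infimum is $0$, so $u=0$ a.e.\ on $B(\xi_0,2r)$.

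\textbf{Stage 2: connectedness.} Let $Z$ be the set of points $\xi\in\hn$ having a neighbourhood on which $u=0$ a.e.; $Z$ is open by definition. To see $Z$ is closed, take $\xi\in\overline{Z}$ and pick $\eta\in Z\cap B(\xi,r/2)$ for a given $r>0$. Then $u$ vanishes a.e.\ on a small ball around $\eta$, which lies inside $B(\xi,r)$ and has positive measure. Stage 1 then gives $u=0$ a.e.\ on $B(\xi,2r)$, so $\xi\in Z$. Since $\hn\cong\mathbb{R}^{2n+1}$ is connected, $Z=\emptyset$ or $Z=\hn$.
\begin{itemize}
\item If $Z=\hn$, covering $\hn$ by countably many such neighbourhoods gives $u=0$ a.e.
\item If $Z=\emptyset$, suppose the set $\{u=0\}$ had positive measure. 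By Lebesgue density with respect to the doubling Haar measure, some ball $B(\xi_0,r)$ would contain a positive-measure portion of it. Stage 1 would then give $u=0$ a.e.\ on $B(\xi_0,2r)$, i.e.\ $\xi_0\in Z$, a contradiction. Hence $u>0$ a.e.
\end{itemize}
Connectedness of $\hn$ is elementary here, so Chow--Rashevsky is not needed beyond what is already built into the Harnack inequality.

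\textbf{Main obstacle.} The delicate point is the uniformity and validity of the weak Harnack inequality for functions that are merely in $S^1(\hn)$, which is only $L^{2^\star}$ rather than $L^2$, and are supersolutions in the global weak sense. I would first verify that localization works. For a cutoff $\psi\in C_c^\infty$ and $\ell>0$, the test functions $\psi^2(u+\ell)^{-\beta}$ with $\beta>0$ are admissible and nonnegative: $u+\ell\geq \ell$, so negative powers are bounded, and $u\in L^2_{\mathrm{loc}}$ by Hölder since $u\in L^{2^\star}$. Once these are admissible, the Moser scheme from \cite{BGV26} (or \cite{BLU07}) applies verbatim. If any doubt remained, I would cite the weak Harnack inequality directly in the $S^{1}_{\mathrm{loc}}$ supersolution framework, where it is standard.
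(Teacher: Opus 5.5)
Your proposal is correct and takes essentially the route the paper indicates: the weak Harnack inequality of \cite[Proposition~2.2]{BGV26} for nonnegative weak supersolutions, combined with a connectedness argument that propagates vanishing (or strict positivity) across $\hn$; the paper omits the details. Two of your points usefully fill in what the paper leaves out: plain topological connectedness of $\hn$ is enough, since the horizontal connectivity the paper attributes to Chow--Rashevsky is already built into the Harnack inequality, and the Moser test functions $\psi^2(u+\ell)^{-\beta}$ are admissible for $u\in S^1(\hn)$.
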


The first solution arises as a local minimizer with negative energy, capturing the second requires a mountain-pass framework, which inherently risks concentration due to the critical Folland--Stein embedding. We overcome this loss of compactness by using the explicit smallness of $f$ to rigidly bound the mountain-pass minimax level strictly below the critical quantization threshold. Combined with our profile decomposition, this energy bound categorically rules out the formation of Jerison--Lee bubbles, forcing the strong compactness necessary to extract the second solution. Towards our goal of finding multiple solutions, we first consider the auxiliary functional
\begin{equation*}
    J_{f}(v)\coloneqq \frac{1}{2}\|\nabla_{\mathbb{H}^n}v\|_{L^2(\mathbb{H}^n)}^2 - \frac{1}{2^{\star}}\int_{\mathbb{H}^n}v_{+}^{2^{\star}}\,d\xi - {}_{(S^1)'}\langle f,v\rangle_{S^1},
\end{equation*}
where $v_{+}\coloneqq \max \{v,0\}$ and $v_{-}\coloneqq \max\{-v,0\}$. If $u\in S^1(\mathbb{H}^n)$ is a critical point of $J_{f}$, then $u$ weakly solves the positive problem
\begin{equation}\label{pos_eqn_pert_Hn}\tag{$\wp_{+}$}
\begin{cases}
   \mathcal{L}_{\mathbb{H}^n} u =  u_{+}^{2^{\star}-1} + f(\xi) \quad &\text{in } \mathbb{H}^n, \\
    u \in S^1(\mathbb{H}^n).
\end{cases}
\end{equation}
Equivalently, for all test functions $\phi\in S^1(\mathbb{H}^n)$, we have
\begin{align}\label{aux_var_id}
    \int_{\mathbb{H}^n}\nabla_{\mathbb{H}^n}u\cdot\nabla_{\mathbb{H}^n}\phi \,d\xi = \int_{\mathbb{H}^n}u_{+}^{2^{\star}-1}\phi\,d\xi + {}_{(S^1)'}\langle f,\phi \rangle_{S^1}. 
\end{align}
Taking $\phi = u_{-}\in S^1(\mathbb{H}^n)$ as the test function in \eqref{aux_var_id} and recalling that $f$ is a nonnegative functional, we obtain
\begin{equation*}
    0 \geq -\|u_{-}\|_{S^1(\mathbb{H}^n)}^2 = {}_{(S^1)'}\langle f,u_{-}\rangle_{S^1} \geq 0.
\end{equation*}
This forces $u_{-} \equiv 0$, which implies $u\geq 0$ almost everywhere in $\mathbb{H}^n$. Applying the strong maximum principle for subelliptic super-harmonic functions (see Proposition~\ref{thm_weak_max_prin}), we then deduce that $u>0$ strictly almost everywhere.

\begin{remark}\label{rmk_equi_cric_pt}
    From the above discussion, it is evident that any critical point $u\in S^1(\mathbb{H}^n)$ of the auxiliary functional $J_{f}$ is strictly positive, and therefore constitutes a classical weak solution to the original problem \eqref{eqn_pert_Hn_1}.
\end{remark}

To rigorously establish the existence of two distinct critical points for $J_{f}$, we employ a variational partitioning strategy inspired by the Euclidean frameworks of \cite{BP20, BCG23}, which we adapt here to the Heisenberg setting. We define the fiber map $\Psi : S^1(\mathbb{H}^n)\to\mathbb{R}$ by
\begin{equation*}
    \Psi(u)\coloneqq \|u\|_{S^1(\mathbb{H}^n)}^2 - p \int_{\mathbb{H}^n}|u|^{2^{\star}}\,d\xi, \quad \text{for all } u\in S^1(\mathbb{H}^n),
\end{equation*}
where $p\coloneqq 2^{\star}-1$. Using this $C^2$ functional, we partition the space $S^1(\mathbb{H}^n)$ into three mutually disjoint subsets:
\begin{align*}
    \Upsilon_1 &\coloneqq \left\{u\in S^1(\mathbb{H}^n)\,:\,u=0 \text{ or }\Psi (u)>0\right\},\\
    \Upsilon   &\coloneqq \left\{u\in S^1(\mathbb{H}^n)\setminus \{0\}\,:\,\Psi (u)=0\right\}, \\
    \Upsilon_2 &\coloneqq \left\{u\in S^1(\mathbb{H}^n)\,:\,\Psi (u)<0\right\}.
\end{align*}
These sets are homeomorphic to the open unit ball, the unit sphere, and the exterior of the unit ball in $S^1(\mathbb{H}^n)$, respectively. We then define the corresponding critical energy levels:
\begin{equation}\label{cric_lev}
    c_0\coloneqq \inf_{u \in \Upsilon_1} J_{f}(u),\qquad c_1\coloneqq \inf_{u \in \Upsilon} J_{f}(u).
\end{equation}

\begin{remark}\label{bdd_away}
    For any $u\in \Upsilon$, the Folland--Stein--Sobolev inequality yields
    \begin{equation*}
        \|u\|_{S^1(\mathbb{H}^n)}^2 = p\|u\|_{L^{2^{\star}}(\mathbb{H}^n)}^{2^{\star}} \leq p \mathcal{S}_Q^{-\frac{2^{\star}}{2}} \|u\|_{S^1(\mathbb{H}^n)}^{2^{\star}}.
    \end{equation*}
    Since $p = 2^{\star}-1 > 1$, this inequality guarantees that both the subgradient norm $\|u\|_{S^1(\mathbb{H}^n)}$ and the Lebesgue norm $\|u\|_{L^{2^{\star}}(\mathbb{H}^n)}$ are bounded strictly away from $0$ uniformly for all $u\in\Upsilon$. 
\end{remark}

\begin{remark}\label{rmk_psi}
    For any $u \in S^1(\mathbb{H}^n)$ and $t>0$, we have
    \begin{equation*}
        \Psi (tu) = t^2\|u\|_{S^1(\mathbb{H}^n)}^2 - t^{2^{\star}} p\|u\|_{L^{2^{\star}}(\mathbb{H}^n)}^{2^{\star}}.
    \end{equation*}
    Observe that $\Psi(0)=0$, and the ray function $t\mapsto \Psi (tu)$ is strictly increasing on $[0,t_u^{\max}]$ and strictly decreasing on $[t_u^{\max},\infty)$ for some unique maximum point $t_u^{\max} > 0$ depending on $u$. Consequently, for any direction $u\in S^1(\mathbb{H}^n)$ with $\|u\|_{S^1(\mathbb{H}^n)}=1$, there exists a unique scaling factor $ t_u > 0$  depending on $u$ such that $t_u\, u\in\Upsilon$. Furthermore, because $\Psi (tu) = (t^2-t^{2^{\star}})\|u\|_{S^1(\mathbb{H}^n)}^2$ whenever $u\in\Upsilon$, it follows immediately that $tu\in \Upsilon_1$ for all $t\in (0,1)$, and $tu\in \Upsilon_2$ for all $t>1$.
\end{remark}

\begin{lemma}\label{lem_C_0}
    Suppose, $C_0 \coloneqq \frac{4}{Q+2}\, p^{-\frac{Q-2}{4}}$. Then we have,
    \begin{equation*}
        \frac{4}{Q+2}\|u\|_{S^1} \geq C_0\mathcal{S}_Q^{\frac{Q}{4}},\qquad\text{ for all }u\in\Upsilon.
    \end{equation*}
\end{lemma}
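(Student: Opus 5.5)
The plan is to use Remark~\ref{bdd_away} in quantitative form. Since $\frac{4}{Q+2}$ appears on both sides ($C_0=\frac{4}{Q+2}p^{-\frac{Q-2}{4}}$), the claim is equivalent to
\[
\|u\|_{S^1}\geq p^{-\frac{Q-2}{4}}\mathcal{S}_Q^{\frac{Q}{4}}\qquad\text{for all }u\in\Upsilon.
\]
So it suffices to derive an explicit lower bound on $\|u\|_{S^1}$ from the constraint $\Psi(u)=0$.

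Fix $u\in\Upsilon$. Then $u\neq0$ and $\|u\|_{S^1}^2=p\|u\|_{L^{2^\star}}^{2^\star}$. Combining this with the sharp Folland--Stein--Sobolev inequality $\mathcal{S}_Q\|u\|_{L^{2^\star}}^2\leq\|u\|_{S^1}^2$, whose constant is normalized by \eqref{sobolev-const}, gives
\[
\|u\|_{S^1}^2\leq p\,\mathcal{S}_Q^{-\frac{2^\star}{2}}\|u\|_{S^1}^{2^\star}.
\]
Dividing by $\|u\|_{S^1}^2>0$ yields $\|u\|_{S^1}^{2^\star-2}\geq p^{-1}\mathcal{S}_Q^{\frac{2^\star}{2}}$.

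Next I would track the exponents. From $2^\star-2=\frac{4}{Q-2}$ and $\frac{2^\star}{2}=\frac{Q}{Q-2}$, raising both sides to the power $\frac{Q-2}{4}$ gives
\[
\|u\|_{S^1}\geq p^{-\frac{Q-2}{4}}\mathcal{S}_Q^{\frac{Q}{4}}.
\]
Multiplying by $\frac{4}{Q+2}$ gives the statement.

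The only point that needs care is the normalization of $\mathcal{S}_Q$. From \eqref{sobolev-const}, $\|U\|_{S^1}^2=\|U\|_{L^{2^\star}}^{2^\star}=\mathcal{S}_Q^{Q/2}$, so
\[
\frac{\|U\|_{S^1}^2}{\|U\|_{L^{2^\star}}^2}=\mathcal{S}_Q^{\frac{Q}{2}-\frac{Q-2}{2}}=\mathcal{S}_Q .
\]
Hence $\mathcal{S}_Q$ is exactly the constant in $\mathcal{S}_Q\|u\|_{2^\star}^2\le\|u\|_{S^1}^2$, with extremal $U$ by Theorem~\ref{thm_JL_bubble}. Apart from this check, the proof is routine.
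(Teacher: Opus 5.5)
Your argument is correct and is essentially the paper's proof. Both combine the constraint $\|u\|_{S^1}^2=p\|u\|_{L^{2^\star}}^{2^\star}$ with the sharp inequality $\|u\|_{S^1}\geq\mathcal{S}_Q^{1/2}\|u\|_{L^{2^\star}}$ and solve for $\|u\|_{S^1}$; the paper substitutes $\|u\|_{L^{2^\star}}=p^{-1/2^\star}\|u\|_{S^1}^{2/2^\star}$ rather than dividing by $\|u\|_{S^1}^2$, and your exponent bookkeeping and normalization check are both consistent with that.
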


\begin{proof}
    For $u\in\Upsilon$, we note that
    \begin{equation*}
        \|u\|_{L^{2^{\star}}} = p^{-\frac{1}{2^{\star}}}\|u\|_{S^1}^{\frac{2}{2^{\star}}}.
    \end{equation*}
    By combining this with the definition of $\mathcal{S}_Q$, we have
    \begin{align*}
        \|u\|_{S^1}\geq \mathcal{S}_Q^{\frac{1}{2}}p^{-\frac{1}{2^{\star}}}\|u\|_{S^1}^{\frac{2}{2^{\star}}},
    \end{align*}
    for all $u\in\Upsilon$. The lemma follows from here, considering the definition of $C_0$. 
\end{proof}

\begin{lemma}\label{lem_strict_less}
Let $C_0$ be as in Lemma~\ref{lem_C_0}, and let $c_0$ and $c_1$ be defined by \eqref{cric_lev}. If
\begin{equation}\label{J1:3}
\inf_{\substack{u\in S^1(\hn)\\ \|u\|_{L^{2^{\star}}}=1}}
\left\{
C_0\|u\|_{S^1}^{\frac{Q+2}{2}}
-{}_{(S^1)'}\langle f,u\rangle_{S^1}
\right\}>0,
\end{equation}
then $c_0<c_1$.
\end{lemma}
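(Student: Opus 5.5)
The plan is to compare $J_f$ on $\Upsilon$ with its values slightly inside $\Upsilon_1$ along rays. For every $u\in\Upsilon$ I will push $u$ radially inward to $tu$ with $t<1$. By Remark~\ref{rmk_psi}, such $tu$ lies in $\Upsilon_1$. The hypothesis \eqref{J1:3} should then give a \emph{uniform} energy decrease. First, \eqref{J1:3} yields $d>0$ with $C_0\|w\|_{S^1}^{\frac{Q+2}{2}}-{}_{(S^1)'}\langle f,w\rangle_{S^1}\ge d$ for all $w$ with $\|w\|_{L^{2^\star}}=1$.

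\textbf{Step 1 (radial derivative on $\Upsilon$).} For $u\in\Upsilon$ I compute
\[
\tfrac{d}{dt}J_f(tu)\big|_{t=1}=\|u\|_{S^1}^2-\int u_+^{2^\star}-\langle f,u\rangle\ \ge\ \|u\|_{S^1}^2-\int|u|^{2^\star}-\langle f,u\rangle .
\]
On $\Upsilon$ we have $\int|u|^{2^\star}=\|u\|_{S^1}^2/p$, and $1-1/p=\frac{4}{Q+2}$. Hence the right-hand side equals $\frac{4}{Q+2}\|u\|_{S^1}^2-\langle f,u\rangle$.

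\textbf{Step 2 (homogeneity).} Write $u=s\,w$ with $s=\|u\|_{L^{2^\star}}$ and $\|w\|_{L^{2^\star}}=1$. The constraint $\Psi(u)=0$ reads $s^{2^\star-2}=\|w\|_{S^1}^2/p$, so $s=(\|w\|_{S^1}^2/p)^{\frac{Q-2}{4}}$. Substituting, $\frac{4}{Q+2}\|u\|_{S^1}^2=s\cdot\frac{4}{Q+2}p^{-\frac{Q-2}{4}}\|w\|_{S^1}^{\frac{Q+2}{2}}=s\,C_0\|w\|_{S^1}^{\frac{Q+2}{2}}$, the same constant as in Lemma~\ref{lem_C_0}. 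Therefore the radial derivative at $t=1$ is at least $s\,d$. By Remark~\ref{bdd_away}, $s\ge s_0>0$ uniformly on $\Upsilon$, so this derivative is $\ge s_0d>0$.

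\textbf{Step 3 (uniform gap).} First I check $c_1>-\infty$ and that $\Upsilon$-minimizing sequences are bounded. On $\Upsilon$,
\[
J_f(u)\ge\big(\tfrac12-\tfrac1{2^\star p}\big)\|u\|_{S^1}^2-\|f\|_{(S^1)'}\|u\|_{S^1},
\]
which is coercive. So it suffices to consider $u\in\Upsilon$ with $J_f(u)\le c_1+1$, which gives $\|u\|_{S^1}\le M$. For such $u$, the second derivative $\frac{d^2}{dt^2}J_f(tu)=\|u\|^2-(2^\star-1)t^{2^\star-2}\int u_+^{2^\star}$ is bounded on $t\in[1/2,1]$ by a constant depending only on $M$. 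Taylor's formula then gives $\tau>0$, independent of $u$, with
\[
J_f((1-\tau)u)\le J_f(u)-\tfrac{\tau}{2}s_0d .
\]
Since $(1-\tau)u\in\Upsilon_1$, taking the infimum over $u$ gives $c_0\le c_1-\frac{\tau}{2}s_0d<c_1$.

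The main obstacle is Step 3: turning the pointwise positivity of the radial derivative into a gap that is uniform along a minimizing sequence for $c_1$. This needs the norm bound on sublevel sets of $J_f|_\Upsilon$ together with the lower bound $s_0$. If the second-derivative control turns out awkward because of the $u_+$ term, I would instead use monotonicity of $t\mapsto\frac{d}{dt}J_f(tu)$ near $t=1$, estimated via the explicit polynomial-in-$t$ form.
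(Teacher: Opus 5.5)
Your proof is correct and uses the same mechanism as the paper: \eqref{J1:3}, homogeneity and Remark~\ref{bdd_away} give a uniform positive lower bound on the radial derivative at $t=1$ over $\Upsilon$, and a uniform second-derivative bound on bounded sublevel sets turns this into a uniform energy drop when you move radially inward into $\Upsilon_1$. The only difference is the comparison point: you step inward by a fixed $\tau$ and work with $J_f$ directly, whereas the paper compares with the critical point $t_n u_n$ of the auxiliary functional $\tilde{I}_f$, which forces it to prove $c_0<0$, split on the sign of $c_1$ and establish existence of $t_n$ — detours your version avoids.
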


\begin{proof}
Define a modified functional $\tilde{I}_{f}:S^1(\hn)\to \R$ by,
\begin{equation*}
    \tilde{I}_{f}(u)\coloneqq \frac{1}{2}\|u\|_{S^1}^2 -\frac{1}{2^{\star}}\|u\|_{L^{2^{\star}}}^{2^{\star}}-{}_{(S^1)'}{\langle}f,u\rangle_{S^1}.
\end{equation*}
{\bf Step 1}: First, we assert that there exists $\delta>0$ {(independent of $u$)} such that
		$$\frac{{\rm d}}{{\rm d}t}\bigg{|}_{t=1}\tilde{I}_{f}(tu)\geq \delta \quad\forall\,\,  u\in \Upsilon.$$ 
Doing a straightforward computation, it is easy to see that for any $u\in \Upsilon$,
\begin{align}\label{15-4-4}
\frac{{\rm d}}{{\rm d}t}\bigg{|}_{t=1} \tilde{I}_{f}(tu)=\frac{4}{Q+2}\|u\|_{S^1}^2-{}_{(S^1)'}{\langle}f,u{\rangle}_{S^1}
=C_0\frac{\|u\|^\frac{(Q+2)}{2}_{S^1}}{\left(\int_{\hn}|u|^{2^{\star}}d\xi\right)^\frac{Q-2}{4}}- {}_{(S^1)'}{\langle}f,u{\rangle}_{S^1}.
\end{align}

Further, $\eqref{J1:3}$ implies there  exists $C>0$ such that
\begin{equation}\label{15-4-3}
	\inf_{\substack{u\in S^1(\hn),\\ \int_{\hn}|u|^{2^{\star}} d\xi=1}}\left\{C_0\|u\|_{S^1}^\frac{(Q+2)}{2}
	-{}_{(S^1)'}{\langle}f,u{\rangle}_{S^1}\right\}\geq C.
\end{equation}

Now, 
\begin{align}
\eqref{15-4-3}&\Longleftrightarrow C_0\frac{\|u\|^\frac{Q+2}{2}_{S^1}}{(\int_{\hn}|u|^{2^{\star}}d\xi)^\frac{Q-2}{4}}-{}_{(S^1)'}{\langle}f,u{\rangle}_{S^1} \geq C, \text{ with } \int_{\hn}|u|^{2^{\star}}d\xi=1.\nonumber\\
&\Longleftrightarrow C_0\frac{\|u\|^\frac{Q+2}{2}_{S^1}}{(\int_{\hn}|u|^{2^{\star}}d\xi)^\frac{Q-2}{4}}-{}_{(S^1)'}{\langle}f,u{\rangle}_{S^1} \geq C\left(\int_{\hn}|u|^{2^{\star}}d\xi\right)^{\frac{1}{2^{\star}}}, \; \forall\, u\in S^1(\hn)\setminus\{0\}.\nonumber
\end{align}
From Remark~\ref{bdd_away} we can see $\int_{\hn}|u|^{2^{\star}}\,d\xi$ is bounded away from $0$ for all $u\in \Upsilon.$  Substituting the above estimate into \eqref{15-4-4} completes Step~1.

\noindent {\bf Step 2:} Let $\{u_n\}\subset\Upsilon$ be a minimizing sequence for $J_f$, namely, $J_f(u_n)\to c_1$ and
$ 
\|u_n\|_{S^1}^2=p\int_{\hn}|u_n|^{2^{\star}}\,d\xi
$ for every $n\in\mathbb{N}$. Hence, for large $n$,  
\begin{align*}c_1+o(1)\geq J_{f}(u_n)\geq \tilde{I}_{f}(u_n)&\geq \left(\frac{1}{2}-\frac{1}{p(p+1)}\right)\|u_n\|^2_{S^1}- \|f\|_{(S^1)'}\|u_n\|_{S^1}.
  \end{align*}
The first two inequalities imply that $\left\{\tilde{I}_{f}(u_n)\right\}$ is a bounded sequence. Moreover, using the last inequality we get $\{\|u_n\|_{S^1}\}$ is bounded and hence  $\left\{\int_{\hn}|u_n|^{2^{\star}}d\xi\right\}$ is also bounded. In this step, for large $n$, we prove the existence of a unique $t_n\in (0,1)$ such that 
\begin{align}\label{t_nexistence}
    \frac{{\rm d}}{{\rm d}t}\bigg{|}_{t=t_n}\tilde{I}_{f}(t u_n)=0.
\end{align}
First, we claim that
\begin{align}\label{c_0 negative}
    c_0<0.
\end{align}
To prove \eqref{c_0 negative}, it is enough to find a $v\in \Upsilon_1$ with $J_{f}(v)<0$. In view of Remark \ref{rmk_psi}, we choose $u \in \Upsilon$ such that ${}_{(S^1)'}{\langle}f,u{\rangle}_{S^1}>0$. Further, 
$$J_{f}(\tilde{t}u)= {\tilde{t}}^2 \left(\frac{p}{2}-\frac{(\tilde{t})^{2^{\star}-2}}{2^{\star}}\right) \int_{\hn}|u|^{2^{\star}}\,d\xi-\tilde{t}{}_{(S^1)'}{\langle}f,u{\rangle}_{S^1} <0,$$
for $\tilde{t} << 1$, and using Remark \ref{rmk_psi} we also have $\tilde{t} u\in \Upsilon_1$. Hence, \eqref{c_0 negative} holds.

By the definition, we have $c_0\leq c_1$, and if $c_1\geq 0$, then due to \eqref{c_0 negative}, there is nothing to prove. So for large $n \in \mathbb{N}$, we have
		$$0>J_{f}(u_n)\geq \left(\frac{1}{2}-\frac{1}{p(p+1)}\right)\|u_n\|^2_{S^1}-{}_{(S^1)'}{\langle}f,u_n{\rangle}_{S^1},$$
which implies ${}_{(S^1)'}{\langle}f,u_n{\rangle}_{S^1}> 0$. Consequently, for large $n \in \mathbb{N}$, 
\begin{equation*}
    \frac{{\rm d}}{{\rm d}t}\tilde{I}_{f}(tu_n) \leq p t \int_{\hn} |u_n|^{2^{\star}}\,d\xi - {}_{(S^1)'}{\langle}f,u_n{\rangle}_{S^1} <0, \text{ for $t<<1$. }
\end{equation*}
Therefore, in view of Step~1, there exists $t_n \in (0,1)$ such that \eqref{t_nexistence} holds. Since for all $u\in \Upsilon$, the function $\frac{{\rm d}}{{\rm d}t}\tilde{I}_{f}(tu)$ is strictly increasing for $t\in [0,1)$, we conclude that $t_n$ is unique.

\noindent {\bf Step 3:} In this step, we show that 	
\begin{equation}\label{J3}
\liminf_{n\rightarrow\infty} \left\{ \tilde{I}_{f}(u_n)-\tilde{I}_{f}(t_nu_n) \right\}>0,
\end{equation}
{where $\{u_n\}$ is the same minimizing sequence of $c_1$ as chosen in Step 2.} We denote $h_n(t):= \tilde{I}_{f}(tu_n)$. Then for each $n\in\mathbb{N}$, $h_n\in C^2(\R)$ and has the following properties (by Step~1 and Step~2):
\begin{align*}
&(a) \, h_n'(1)\geq \delta,\\
&(b) \, \text{there exists } {0<t_n^*<<1}, \text{ such that }h_n'(t_n^*)<0,\\ 
&(c) \, h_n' \text{ is increasing in }[0,1],\\
&(d) \, \text{for each }n\in\mathbb{N}, \text{ there exists a unique }t_n\in(0,1-2s_n) \text{ such that }h_n'(t_n)=0.   
\end{align*}
In view of Step 1 and Step 2, we choose $0<s_n<\frac{1-t_n}{2}$ such that $ h_n'(t) \geq \tfrac{\delta}{2}$ for $t\in[1-s_n,\;1]$. 
Therefore, using the fundamental theorem of calculus,
\begin{equation*}
\tilde{I}_{f}(u_n)-\tilde{I}_{f}(t_nu_n)=h_n(1)-h_n(t_n)=\displaystyle \int_{t_n}^{1}h_n'(s) ds\geq \int_{1-s_n}^{1} h_n'(s)\,ds\geq \frac{\delta s_n}{2}.
\end{equation*}
To establish \eqref{J3}, it is enough to show that $s_n$ can be chosen independently of $n\in\mathbb{N}$. This is possible as $\{u_n\}$ is bounded in $S^1(\hn),$ so that
\begin{align*}
 \left|h_n^{''}(t)\right| = \left|\|u_n\|_{S^1}^2 - pt^{p-1}\int_{\hn}|u_n|^{2^{\star}}\,d\xi\right| =  \left| 1-t^{p-1} \right| \|u_n\|_{S^1}^2\leq C,
\end{align*}
for all $n \in \mathbb{N}$ and $t\in[0,1]$.
	
\noindent {\bf Step 4:} 
By the definitions of $J_f$ and $\tilde{I}_f$, $\frac{{\rm d}}{{\rm d}t}J_f(tu)\geq\frac{{\rm d}}{{\rm d}t}\tilde{I}_f(tu)$ for all $u\in S^1(\hn)$ and $t>0$. Hence,
\begin{align*}
J_{f}(u_n)-J_{f}(t_nu_n)= \int_{t_n}^{1}\frac{{\rm d}}{{\rm d}t}\left(J_{f}(tu_n)\right)\,dt \geq \int_{t_n}^{1}\frac{{\rm d}}{{\rm d}t}\left(\tilde{I}_{f}(tu_n)\right)\,dt 
= \tilde{I}_{f}(u_n)-\tilde{I}_{f}(t_nu_n).
\end{align*}
Since $\{u_n\}$ minimizes $J_f$ on $\Upsilon$, $t_nu_n\in\Upsilon_1$, and using \eqref{J3} , we conclude
$$c_0 = \inf_{u\in \Upsilon_1}J_{f}(u) \leq \liminf_{n \to \infty} J_{f}(t_n u_n) < \liminf_{n \to \infty} J_{f}(u_n) = \inf_{u\in \Upsilon}J_{f}(u)= c_1.$$
\end{proof}

We now introduce the problem at infinity associated with \eqref{hom_eqn}:
\begin{equation}\label{pos-hom-eqn}
    \begin{cases}
        \mathcal{L}_{\hn}u = u_{+}^{p}\quad \text{ in }\hn,\\
        u\in S^1(\hn),
    \end{cases}
\end{equation}
and the associated energy functional $J_{0}: S^1(\hn)\rightarrow\R$ is defined by
\begin{equation*}
    J_{0}(u)=\frac{1}{2}\|\nabla_{\hn}u\|_{L^2}^2-\frac{1}{2^{\star}}\int_{\hn}u_{+}^{2^{\star}}\,d\xi.
\end{equation*}
Arguing as in Remark~\ref{rmk_equi_cric_pt}, it immediately follows that solutions of \eqref{pos-hom-eqn} are positive solutions of \eqref{hom_eqn}.

\begin{proposition}\label{fir-cri-pt}
Assume that \eqref{J1:3} holds. Then $J_{f}$ has a critical point $u_0\in \Upsilon_1$ with $J_{f}(u_0)=c_0<0$. In particular, $u_0\in S^1(\hn)$ is a positive weak solution to \eqref{eqn_pert_Hn_1} whose energy is negative.
\end{proposition}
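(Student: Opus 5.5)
We plan to obtain $u_0$ by minimizing $J_f$ over $\Upsilon_1$ via Ekeland's variational principle, and then to show that the minimizer lies in the interior of $\Upsilon_1$, so that it is a free critical point.

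\textbf{Minimizing sequence.} First, $c_0>-\infty$. Indeed, on $\Upsilon_1$ we have $\int u_+^{2^\star}\le\int|u|^{2^\star}\le p^{-1}\|u\|_{S^1}^2$. Hence
\[
J_f(u)\ge\Bigl(\tfrac12-\tfrac1{p\,2^\star}\Bigr)\|u\|_{S^1}^2-\|f\|_{(S^1)'}\|u\|_{S^1},
\]
which is bounded below and also shows that minimizing sequences are bounded. By \eqref{c_0 negative}, $c_0<0$. The set $\overline{\Upsilon_1}=\Upsilon_1\cup\Upsilon$ is closed, and by Lemma~\ref{lem_strict_less} we have $\inf_{\Upsilon}J_f=c_1>c_0$. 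Therefore $c_0=\inf_{\overline{\Upsilon_1}}J_f$.

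We apply Ekeland's principle on the complete metric space $\overline{\Upsilon_1}$. This gives a sequence $\{u_k\}\subset\overline{\Upsilon_1}$ with $J_f(u_k)\to c_0$ and
\[
J_f(w)\ge J_f(u_k)-\tfrac1k\|w-u_k\|_{S^1}\quad\text{for all } w\in\overline{\Upsilon_1}.
\]
Since $c_0<c_1$, for large $k$ we have $J_f(u_k)<c_1$, so $u_k\notin\Upsilon$. Because $\Upsilon_1$ is open and $u_k\in\Upsilon_1$, we may take $w=u_k+t\varphi$ for small $t$. This yields $\|J_f'(u_k)\|_{(S^1)'}\le 1/k$. (The case $u_k=0$ is harmless, since $0$ lies in the open set $\{\Psi>0\}\cup B_\varepsilon(0)$, because $\Psi(u)\approx\|u\|^2>0$ near $0$.)

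Thus $\{u_k\}$ is a $(PS)_{c_0}$ sequence for $J_f$. We next want to rule out bubbling. Replacing $|u|^{2^\star}$ by $u_+^{2^\star}$ does not affect the argument of Theorem~\ref{thm_PS_decomp}: the same steps go through, with Brezis--Lieb applied to $t\mapsto t_+^{2^\star}$. Moreover, testing $J_f'(u_k)$ with $(u_k)_-$ gives $\|(u_k)_-\|_{S^1}\to0$, so we may as well assume $u_k\ge0$ up to an $o(1)$ error. By Theorem~\ref{thm_PS_decomp} with $a\equiv1$, we obtain
\[
c_0=J_f(u^{(0)})+\tfrac{m}{Q}\mathcal S_Q^{Q/2}+\cdots,
\]
where $u^{(0)}$ is a critical point of $J_f$ and each bubble contributes at least $\frac1Q\mathcal S_Q^{Q/2}$.

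\textbf{Main obstacle: excluding $m\ge1$.} We plan to use the geometry of the bubbles rather than energy counting alone. Each bubble $U_k=D_{\xi_k,\lambda_k}U$ satisfies $\|U\|^2=\int U^{2^\star}$, so $\Psi(U)=(1-p)\|U\|^2<0$. By asymptotic orthogonality (Lemma~\ref{pp-lemma-3}) and the Brezis--Lieb splitting, we get
\[
\Psi(u_k)=\Psi(u^{(0)})+\sum_{j=1}^m\Psi(U^{(j)})+o(1).
\]
Each term $\Psi(U^{(j)})$ equals $-(p-1)\mathcal S_Q^{Q/2}$.

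We also need $\Psi(u^{(0)})\le0$ for a nonzero critical point. For a critical point, $\|u^{(0)}\|^2=\int (u^{(0)})^{2^\star}+\langle f,u^{(0)}\rangle$, which gives only a weaker relation. So instead we argue via energy. If $m\ge1$, then
\[
c_0\ge J_f(u^{(0)})+\tfrac1Q\mathcal S_Q^{Q/2}.
\]
Since $u^{(0)}$ is the weak limit and $\overline{\Upsilon_1}$ may not be weakly closed, we cannot directly compare $J_f(u^{(0)})$ with $c_0$. We will try the following. The splitting of $\Psi$ together with $u_k\in\Upsilon_1$ forces $\Psi(u^{(0)})\ge m(p-1)\mathcal S_Q^{Q/2}>0$, so $u^{(0)}\in\Upsilon_1$. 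Then $J_f(u^{(0)})\ge c_0$, and hence $c_0\ge c_0+\frac1Q\mathcal S_Q^{Q/2}$, a contradiction.

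Therefore $m=0$, so $u_k\to u_0:=u^{(0)}$ strongly in $S^1$. Since $\Upsilon_1\cup\Upsilon$ is closed and $J_f(u_0)=c_0<c_1$, we get $u_0\in\Upsilon_1$. Also $J_f'(u_0)=0$ and $J_f(u_0)=c_0<0$; in particular $u_0\ne0$. By Remark~\ref{rmk_equi_cric_pt}, $u_0$ is a positive weak solution of \eqref{eqn_pert_Hn_1}.
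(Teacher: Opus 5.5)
Your proposal is correct and follows the paper's proof essentially step by step. The shared steps are: Ekeland on the closed set $\{\Psi\ge 0\}$, with $c_0<c_1$ used to place the sequence in the open set $\Upsilon_1$; reduction to a nonnegative $(PS)_{c_0}$ sequence via $(u_k)_-\to 0$; the profile decomposition; and the combination of the energy decoupling with $\Psi<0$ on bubbles to force $m=0$.

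The differences are cosmetic. You run the contradiction in reverse: you deduce $\Psi(u^{(0)})>0$ from $\Psi(u_k)\ge 0$ and then contradict the energy, whereas the paper deduces $\Psi(u_0)\le 0$ from the energy and then contradicts $\Psi(u_n)\ge 0$. You also invoke an exact $\Psi$-splitting, which needs the $L^{2^\star}$ decoupling of orthogonal profiles. The paper needs only the one-sided bound from $(a+b)^{2^\star}\ge a^{2^\star}+b^{2^\star}$ for nonnegative functions, which avoids that decoupling.
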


\begin{proof} We decompose the proof into a few steps.
		
\noindent {\bf Step~1:} 
In this step, we prove that $c_0>-\infty$. Since $J_f(u)\geq \tilde{I}_f(u)$,it suffices to show that $\tilde{I}_f$ is bounded from below. By the definition of $\Upsilon_1$, for every $u\in\Upsilon_1$ we have
\begin{equation}\label{16-4-1}
\tilde{I}_f(u)
\geq
\left(
\frac{1}{2}-\frac{1}{p(p+1)}
\right)
\|u\|_{S^1}^2
-
\|f\|_{(S^1)'}\|u\|_{S^1}.
\end{equation}
The right-hand side of \eqref{16-4-1} is a quadratic polynomial in $\|u\|_{S^1}$ with positive leading coefficient. Hence, it is bounded from below, and therefore so is $\tilde{I}_f$. Consequently, $c_0>-\infty$, which completes Step~1.

\noindent {\bf Step~2}: 
In this step, we establish the existence of a bounded nonnegative $(PS)$ sequence $\{u_n\}\subset\Upsilon_1$ for $J_f$ at level $c_0$. Let $\{u_n\}\subset\overline{\Upsilon}_1$ satisfy $J_f(u_n)\to c_0$. Because of assumption \eqref{J1:3}, Lemma~\ref{lem_strict_less} holds and it gives $c_0<c_1$, and thereby we may assume that $\{u_n\}\subset\Upsilon_1$. By Ekeland's variational principle, after passing to a subsequence, $\{u_n\}$ is a $(PS)_{c_0}$ sequence for $J_f$ in $\Upsilon_1$. Since $J_f(u)\geq\tilde{I}_f(u)$, estimate \eqref{16-4-1} shows that $\{u_n\}$ is bounded in $S^1(\hn)$. Thus, up to a subsequence, $u_n\rightharpoonup u_0$ in $S^1(\hn)$ and $u_n\to u_0$ a.e. in $\hn$. In particular, $(u_n)_\pm\to(u_0)_\pm$ a.e. in $\hn$. Since $f$ is a nonnegative functional,
\begin{align}
o(1)&={}_{(S^1)'}\langle J_f'(u_n),(u_n)_-\rangle_{S^1}
=\langle u_n,(u_n)_-\rangle_{S^1}-{}_{(S^1)'}\langle f,(u_n)_-\rangle_{S^1}
\leq-\|(u_n)_-\|_{S^1}^2.\nonumber
\end{align}
Hence $(u_n)_-\to0$ in $S^1(\hn)$ and, up to a subsequence, a.e. in $\hn$, so that $(u_0)_-=0$ a.e. in $\hn$. Therefore, without loss of generality, $\{u_n\}$ may be taken to be a nonnegative $(PS)_{c_0}$ sequence for $J_f$. This completes Step~2.

\noindent {\bf Step~3:}  
In this step, we prove that $u_n\to u_0$ in $S^1(\hn)$ and $u_0\in\Upsilon_1$. By Theorem~\ref{thm_PS_decomp},
\begin{equation}\label{J5}
u_n-\bigg(u_0+\sum_{j=1}^{m}D_{\xi_n^j,\lambda_n^j}\tilde u_j\bigg)\to0
\quad\text{in }S^1(\hn)\text{ as }n\to\infty,
\end{equation}
where $J_f'(u_0)=0$, each $\tilde u_j$ is a positive solution of \eqref{eq:limiting_eq_j} by Remark~\ref{rmk_equi_cric_pt} with $a\equiv1$ and $f\equiv0$, and $\{\xi_n^j\}_n\subset\hn$, $\{\lambda_n^j\}_n\subset\R^+$ are suitable sequences such that $\left|\ln(\lambda_n^j)\right|\to\infty$ and either $\xi_n^j\to\xi^j$ or $\|\xi_n^j\|\to\infty$ as $n\to\infty$. It remains to show that $m=0$. Suppose, to the contrary, that $m\geq1$ in \eqref{J5}. Then, for each $1\leq j\leq m$,
\begin{align}\label{16-4-3}
\Psi\left(D_{\xi_n^j,\lambda_n^j}\tilde u_j\right)
&=\|\tilde u_j\|_{S^1}^2-p\int_{\hn}|\tilde u_j|^{2^{\star}}\,d\xi
=(1-p)\|\tilde u_j\|_{S^1}^2<0.
\end{align}
Moreover, Theorem~\ref{thm_PS_decomp}--$(5)$ gives
\[
c_0=\lim_{n\to\infty}J_f(u_n)
=J_f(u_0)+\sum_{j=1}^{m}J_0(\tilde u_j).
\]

Since $\tilde u_j$ solves \eqref{eq:limiting_eq_j}, we have $J_0(\tilde u_j)\geq \frac{1}{Q}\mathcal{S}_Q^{\frac{Q}{2}}$, and hence $J_f(u_0)<c_0$. Therefore, $u_0\notin\Upsilon_1$, since $c_0=\inf_{\Upsilon_1}J_f(u)$, and
\begin{equation}\label{16-4-4}
\Psi(u_0)\leq0.
\end{equation}
We next estimate $\Psi\left(u_0+\sum_{j=1}^{m}D_{\xi_n^j,\lambda_n^j}\tilde u_j\right)$. Since $\{u_n\}\subset \Upsilon_1$, we have $\Psi(u_n)\geq0$. Combining this with the uniform continuity of $\Psi$ and \eqref{J5}, we obtain
\begin{equation}\label{J8}
0\leq\liminf_{n\to\infty}\Psi(u_n)
=\liminf_{n\to\infty}\Psi\left(u_0+\sum_{j=1}^{m}D_{\xi_n^j,\lambda_n^j}\tilde u_j\right).
\end{equation}
By Step~2, we already know that $u_0\geq0$, while $\tilde u_j\geq0$ for every $j$. Therefore,    
\begin{align}\label{20-4-1}
\Psi\bigg(u_0 +\sum_{j=1}^{m} \big(D_{ \xi_n^j,\lambda_n^j}\tilde{u}_j\big)\bigg)\nonumber &\leq\|u_0\|_{S^1}^2+\left\|\sum_{j=1}^{m} \big(D_{\xi_n^j,\lambda_n^j}\tilde{u}_j\big)\right\|^2_{S^1}+2\left\langle u_0,\sum_{j=1}^{m} \left(D_{ \xi_n^j,\lambda_n^j}\tilde{u}_j\right) \right\rangle_{S^1}\nonumber\\
&- p\bigg(\int_{\hn}|u_0|^{2^{\star}}\,d\xi +\int_{\hn}\left|\sum_{j=1}^{m} \big(D_{ \xi_n^j,\lambda_n^j}\tilde{u}_j\big)\right|^{2^{\star}}\,d\xi\bigg)\nonumber\\
&=\Psi(u_0)+\Psi\bigg(\sum_{j=1}^{m} \big(D_{ \xi_n^j,\lambda_n^j}\tilde{u}_j\big)\bigg)+2\big\langle u_0,\sum_{j=1}^{m} \big(D_{ \xi_n^j,\lambda_n^j}\tilde{u}_j\big) \big\rangle_{S^1}.
\end{align}

A similar argument also shows that
\begin{align}\label{20-4-22}
\Psi\left(\sum_{j=1}^{m} \big(D_{\xi_n^j,\lambda_n^j}\tilde{u}_j\big)\right) &\leq \sum_{j=1}^{m} \Psi\left( \big(D_{\xi_n^j,\lambda_n^j}\tilde{u}_j\big)\right) +2\sum_{\substack{1\leq i<j\leq m}}^{}\bigg\langle \big(D_{\xi_n^i,\lambda_n^i}\tilde{u}_i\big) , \big(D_{\xi_n^j,\lambda_n^j}\tilde{u}_j\big) \bigg\rangle_{S^1}.
\end{align}

The estimates \eqref{20-4-1} and \eqref{20-4-22} together imply
\begin{align*}
\Psi\bigg(u_0 +\sum_{j=1}^{m} \big(D_{\xi_n^j,\lambda_n^j}\tilde{u}_j\big)\bigg)&\leq \Psi(u_0)+\sum_{j=1}^{m} \Psi\bigg( \big(D_{\xi_n^j,\lambda_n^j}\tilde{u}_j\big)\bigg)\nonumber\\
&+2\sum_{\substack{1\leq i<j\leq m}}^{m}\bigg\langle \big(D_{\xi_n^i,\lambda_n^i}\tilde{u}_i\big) , \big(D_{\xi_n^j,\lambda_n^j}\tilde{u}_j\big) \bigg\rangle_{S^1}+ 2\big\langle u_0,\sum_{j=1}^{m} \big(D_{\xi_n^j,\lambda_n^j}\tilde{u}_j\big) \big\rangle_{S^1}.
\end{align*}

Now, we claim the following:
\begin{enumerate}
    \item[(i)] $\left\langle u_0 ,\,  \left(D_{\xi_n^j,\lambda_n^j}\tilde{u}_j \right)\right\rangle_{S^1}= o(1)$  for $j=1,\cdots, \,m$;
    \item[(ii)] $\left\langle  D_{\xi_n^i,\lambda_n^i}\tilde{u}_i,\,  \left(D_{\xi_n^j,\lambda_n^j}\tilde{u}_j\right) \right\rangle_{S^1}=o(1)$,  for $i\neq j$.
\end{enumerate}

For $(i)$, define $u_0^n(\xi)\coloneqq(\lambda_n^j)^{\frac{Q-2}{2}}u_0\left((\xi_n^j)\circ\delta_{\lambda_n^j}(\xi)\right)$. Since $\left|\ln(\lambda_n^j)\right|+\|\xi_n^j\|\to\infty$ and $u_0\in S^1(\hn)$, we have $u_0^n\rightharpoonup0$ in $S^1(\hn)$ as $n\to\infty$. So,
\begin{align*}
\left\langle u_0 ,\,  \left(D_{\xi_n^j,\lambda_n^j}\tilde{u}_j \right)\right\rangle_{S^1}
&= (\lambda_n^j)^{-\frac{Q-2}{2}}\int_{\hn}\nabla_{\hn}u_0\cdot \nabla_{\hn} \tilde{u}_j\left(\delta_{\frac{1}{\lambda_n^j}}\left((\xi_n^j)^{-1}\circ\xi\right)\right)\,d\xi\\
&=  (\lambda_n^j)^\frac{Q-2}{2}\int_{\hn}\nabla_{\hn}u_0\left((\xi_n^j)\circ\delta_{\lambda_n^j}(\xi)\right)\cdot\nabla_{\hn} \tilde{u}_j(\xi)\,d\xi\\
&=  \left\langle u_0^n ,\tilde{u}_j \right\rangle_{S^1} = o(1).
\end{align*}

In the same manner, for $i\neq j$, we can show
\begin{align*}
\left\langle D_{\xi_n^i,\lambda_n^i}\tilde{u}_i,D_{\xi_n^j,\lambda_n^j}\tilde u_j\right\rangle_{S^1}
=
\left\langle D_{\xi_n^j,\lambda_n^j}\tilde u_j,D_{\xi_n^i,\lambda_n^i}\tilde{u}_i\right\rangle_{S^1}
=o(1).
\end{align*}
This follows from Lemma~\ref{pp-lemma-3}, Theorem~\ref{thm_PS_decomp}-$(3)$, and
\[
\left|\log\left(\frac{\lambda_n^{i}}{\lambda_n^{j}}\right)\right|
+\left\|\delta_{\frac{1}{\lambda_n^{j}}}\!\left((\xi_n^{j})^{-1}\circ\xi_n^{i}\right)\right\|
\to\infty.
\]
Hence, Claim~$(ii)$ follows.

Combining the above claim with \eqref{16-4-3} and \eqref{16-4-4} contradicts \eqref{J8}. Hence, $m=0$ in \eqref{J5}, and therefore $u_n\to u_0$ in $S^1(\hn)$. Consequently, $\Psi(u_n)\to\Psi(u_0)$, which implies $u_0\in\overline{\Upsilon}_1$. Since $c_0<c_1$, we further obtain $u_0\in\Upsilon_1$. This completes Step~3.

By the preceding steps, $J_f(u_0)=c_0$ and $J_f'(u_0)=0$. Hence, $u_0$ is a weak solution of \eqref{pos_eqn_pert_Hn}. Combining this with Remark \ref{rmk_equi_cric_pt}, we complete the proof.
\end{proof}

\begin{proposition}\label{sec-cric-pt}
Assume that \eqref{J1:3} holds. Then the functional $J_{f}$ possesses a second critical point $u_1$, distinct from $u_0$, where $u_0\in \Upsilon_1$ is the positive solution of \eqref{pos_eqn_pert_Hn} obtained in Proposition~\ref{fir-cri-pt}. In particular, $u_1$ is a second positive solution of \eqref{eqn_pert_Hn_1}.
\end{proposition}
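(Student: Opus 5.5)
The second critical point will come from a mountain-pass argument for $J_f$ based at the local minimizer $u_0$. The key fact is that $c_1$, the infimum of $J_f$ on $\Upsilon$, separates $u_0\in\Upsilon_1$ from functions lying deep in $\Upsilon_2$, and $c_1>c_0$ by Lemma~\ref{lem_strict_less}.

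\textbf{Mountain-pass geometry.} Let $U$ be the Jerison--Lee bubble and set $U_\lambda:=D_{e,\lambda}U\ge0$. For $t>0$ I will consider the path $t\mapsto u_0+tU_\lambda$. Since $u_0,U_\lambda\ge0$,
\[
\int (u_0+tU_\lambda)^{2^\star}\ge \int u_0^{2^\star}+t^{2^\star}\int U^{2^\star}.
\]
Hence $\Psi(u_0+tU_\lambda)\to-\infty$ and $J_f(u_0+tU_\lambda)\to-\infty$ as $t\to\infty$, uniformly in $\lambda$. Fix $t_0$ large so that $u_0+t_0U_\lambda\in\Upsilon_2$ and $J_f(u_0+t_0U_\lambda)<c_0$. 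Let
\[
\Gamma=\{\gamma\in C([0,1],S^1):\gamma(0)=u_0,\ \gamma(1)=u_0+t_0U_\lambda\},\qquad c^*=\inf_{\gamma\in\Gamma}\max_{s\in[0,1]} J_f(\gamma(s)).
\]
Every path in $\Gamma$ runs from $\Upsilon_1$ to $\Upsilon_2$, so by connectedness it crosses $\Upsilon$. Therefore $c^*\ge c_1>c_0$. The mountain-pass theorem (Ambrosetti--Rabinowitz, or Ekeland in the Ghoussoub--Preiss form) then yields a $(PS)_{c^*}$ sequence. Testing $J_f'$ against $(u_k)_-$, exactly as in Step~2 of Proposition~\ref{fir-cri-pt}, makes the sequence asymptotically nonnegative.

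\textbf{Energy estimate, the main obstacle.} I must show
\[
c^*<c_0+\tfrac1Q\mathcal{S}_Q^{Q/2}.
\]
For this it suffices to find $\lambda$ with
\[
\sup_{t\ge0}J_f(u_0+tU_\lambda)<c_0+\tfrac1Q\mathcal{S}_Q^{Q/2}.
\]
Expand using $J_f'(u_0)=0$:
\[
J_f(u_0+tU_\lambda)=J_f(u_0)+\tfrac{t^2}{2}\|U\|^2-\tfrac1{2^\star}\int\Big[(u_0+tU_\lambda)^{2^\star}-u_0^{2^\star}-2^\star u_0^{2^\star-1}tU_\lambda\Big].
\]
I will use the elementary inequality
\[
(a+b)^{q}\ge a^q+b^q+qa^{q-1}b+qab^{q-1}-Cab^{q-1}\mathbf 1_{\{q<3\}}\ \text{(or the Tarantello variant)},
\]
valid for $a,b\ge0$. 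With it the bracket exceeds $t^{2^\star}\int U^{2^\star}+ c\,t^{2^\star-1}\int u_0U_\lambda^{2^\star-1}$, up to terms that can be absorbed. Since $u_0>0$ a.e., I will take $\lambda\to0$. Then
\[
\int u_0U_\lambda^{2^\star-1}\approx \lambda^{(Q-2)/2}u_0(e)\int U^{2^\star-1}>0
\]
if $u_0$ is continuous and positive near $e$. This should follow from subelliptic regularity of $u_0$ when $f$ is regular; otherwise I would use only $u_0>0$ a.e. and a density point $\eta$, replacing $e$ by $\eta$. This positive correction makes the supremum strictly less than
\[
c_0+\max_t\Big(\tfrac{t^2}2-\tfrac{t^{2^\star}}{2^\star}\Big)\mathcal{S}_Q^{Q/2}=c_0+\tfrac1Q\mathcal{S}_Q^{Q/2}.
\]
The competing error terms are of higher order in $\lambda$. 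I expect this step to be the delicate one, especially controlling the lower-order terms for large $Q$, where $2^\star<3$ and the cross-term estimate is subtle. Tarantello's argument, using only $\int u_0U_\lambda^{2^\star-1}>0$, should transfer verbatim because it uses only the algebraic inequality and the scaling of $U_\lambda$.

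\textbf{Compactness.} Apply Theorem~\ref{thm_PS_decomp} with $a\equiv1$ to the nonnegative $(PS)_{c^*}$ sequence. Its weak limit $u^{(0)}$ is a nonnegative critical point of $J_f$, hence a positive solution by Remark~\ref{rmk_equi_cric_pt}, and each profile is a Jerison--Lee bubble. This gives
\[
c^*=J_f(u^{(0)})+\tfrac mQ\mathcal{S}_Q^{Q/2}.
\]
Since $c_0$ is the infimum of $J_f$ over $\Upsilon_1$, I need $J_f(u^{(0)})\ge c_0$. If $u^{(0)}\in\Upsilon_1$ this is immediate. If $u^{(0)}\notin\Upsilon_1$, then $u^{(0)}\in\Upsilon\cup\Upsilon_2$; on $\Upsilon$ we have $J_f(u^{(0)})\ge c_1>c_0$. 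The case $u^{(0)}\in\Upsilon_2$ is ruled out because a critical point satisfies $\|u\|^2=\int u^{2^\star}+\langle f,u\rangle$, which gives $\Psi(u^{(0)})>-\infty$; I will check the needed sign by combining this identity with the computation of Lemma~\ref{lem_strict_less}. Then $m\ge1$ would force $c^*\ge c_0+\frac1Q\mathcal{S}_Q^{Q/2}$, contradicting the energy estimate. Hence $m=0$, the sequence converges strongly, and $u_1:=u^{(0)}$ is a critical point with $J_f(u_1)=c^*>c_0=J_f(u_0)$. So $u_1\ne u_0$, and it is positive by Remark~\ref{rmk_equi_cric_pt}.
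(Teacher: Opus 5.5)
Your architecture matches the paper's: a mountain pass between $u_0$ and $u_0$ plus a large bubble, the lower bound $c^*\ge c_1>c_0$ from crossing $\Upsilon$, an upper bound from expanding around $u_0$ with $J_f'(u_0)=0$, and Theorem~\ref{thm_PS_decomp} to exclude bubbling. The energy estimate, however, is not delicate, and the route you sketch for it fails as written.

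For $2^\star<3$ (i.e.\ $Q>6$) there is no inequality $(a+b)^q\ge a^q+b^q+qa^{q-1}b+c\,ab^{q-1}$ with $c>0$. Take $b=1$ and $a\to\infty$: the excess over $a^q+b^q+qa^{q-1}b$ grows only like $a^{q-2}\ll a$. So your ``positive correction'' is either false or has the wrong sign. None of this is needed. Since $U_\lambda$ is an exact extremal on all of $\hn$, $\max_{t\ge0}J_0(tU_\lambda)=\frac1Q\mathcal{S}_Q^{Q/2}$ exactly, so there is no truncation error to beat. The strict pointwise inequality $(a+b)^q>a^q+b^q+qa^{q-1}b$ for $a,b>0$, $q>2$, applied with $a=u_0>0$ a.e., gives $J_f(u_0+tU_\lambda)<J_f(u_0)+J_0(tU_\lambda)\le c_0+\frac1Q\mathcal{S}_Q^{Q/2}$ for every $t>0$. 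Because the path is parametrized by a compact interval, its maximum is then strictly below $c_0+\frac1Q\mathcal{S}_Q^{Q/2}$. This is the paper's Claims~2--3, using the family $\tilde u(\delta_{\lambda^{-1}}\cdot)$ instead of $tU_\lambda$. It needs no limit $\lambda\to0$, no regularity of $u_0$, and no restriction on $Q$.

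The genuine gap is in the compactness step: you cannot rule out $u^{(0)}\in\Upsilon_2$. For a critical point, $\Psi(u)=\langle f,u\rangle-(p-1)\int u^{2^\star}$, which has no sign, and nothing excludes critical points in $\Upsilon_2$. For $f=0$, every positive critical point is a bubble with $\Psi=(1-p)\|U\|_{S^1}^2<0$. What you actually need is only $J_f(u)>c_0$ for nonnegative critical points $u\in\Upsilon_2$, and this follows from the fiber map $h(t)=J_f(tu)$:
\begin{itemize}
\item $h'(t)=t\|u\|_{S^1}^2-t^{p}\int u^{2^\star}-\langle f,u\rangle$ is concave, with $h'(0)\le 0$, $h'(1)=0$ and $h''(1)=\Psi(u)<0$.
\item Hence there is $t_1\in[0,1)$ with $h'(t_1)=0$, $h''(t_1)>0$, and $h'>0$ on $(t_1,1)$.
\item Since $\Psi(tu)=t^2h''(t)$, we get $t_1u\in\Upsilon_1$, and therefore $J_f(u)=h(1)>h(t_1)\ge c_0$.
\end{itemize}
With this, your case analysis closes, and $m\ge1$ contradicts the energy bound. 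The paper's proof passes over this point entirely; it simply asserts that $\gamma<J_f(u_0)+J_0(\tilde u)$ forces strong convergence. So the fiber-map argument is needed in either version. Likewise, both versions tacitly use that the asymptotically nonnegative $(PS)$ sequence for $J_f$ is a $(PS)$ sequence for $\mathcal{E}_{1,f}$, which is the functional Theorem~\ref{thm_PS_decomp} actually treats.
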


\begin{proof}
Let $u_0$ be the critical point obtained in Proposition~\ref{fir-cri-pt} and $\tilde{u}$ be a positive solution of \eqref{hom_eqn}. Set $\tilde u_{\lambda}(\xi)\coloneqq \tilde u\left(\delta_{\lambda^{-1}}(\xi)\right)$.

 {\bf Claim 1:} $u_0+\tilde{u}_{\lambda}\in \Upsilon_2$ for $\lambda>0$ large enough. 
 
 \noindent Since $u_0,\, \tilde{u}_{\lambda}>0$, an application of Young's inequality with $\varepsilon>0$ yields 
\begin{align*}
\Psi\left(u_0+\tilde{u}_{\lambda}\right)
&\leq \|u_0\|^2_{S^1}+\|\tilde{u}_{\lambda}\|^2_{S^1}+2\langle u_0,\tilde{u}_{\lambda}\rangle_{S^1} -p\left(\|u_0\|^{2^{\star}}_{L^{2^{\star}}}+\|\tilde{u}_{\lambda}\|^{2^{\star}}_{L^{2^{\star}}}\right)\\
&\leq(1+\varepsilon)\|\tilde{u}_{\lambda}\|^2_{S^1}+(1+C(\varepsilon))\|u_0\|^2_{S^1}-p\left(\|u_0\|^{2^{\star}}_{L^{2^{\star}}}+\|\tilde{u}_{\lambda}\|^{2^{\star}}_{L^{2^{\star}}}\right)\\
& {=(1+C(\varepsilon))\|u_0\|^2_{S^1}-p \|u_0\|^{2^{\star}}_{L^{2^{\star}}}+\big[(1+\varepsilon)\lambda^{Q-2}\|\tilde{u}\|_{S^1}^2-p\lambda^Q\|\tilde{u}\|^{2^{\star}}_{L^{2^{\star}}}\big]}.
\end{align*}
Therefore, $\Psi(u_0+\tilde{u}_{\lambda})<0$ for $\lambda$  large enough. Hence, the claim follows.

{\bf Claim 2:} $J_{f}\left(u_0+\tilde{u}_{\lambda}\right)<J_{f}(u_0)+ J_{0}\left(\tilde{u}_{\lambda}\right),\text{ for all } \lambda>0 $.

\noindent Indeed, since $u_0,\,\tilde{u}_{\lambda}>0$, by testing
\eqref{pos_eqn_pert_Hn} against $\tilde{u}_{\lambda}$, we obtain
\[
{\langle}u_0,\tilde{u}_{\lambda}{\rangle}_{S^1}
=
\int_{\hn}u_0^{p}\tilde{u}_{\lambda}\,d\xi
+
{}_{(S^1)'}{\langle}f,\tilde{u}_{\lambda}{\rangle}_{S^1}.
\]

Hence, combining the above identity, we obtain
\begin{align*}
J_{f}\left(u_0+\tilde{u}_{\lambda}\right)
&= \frac{1}{2}\|u_0\|_{S^1}^2+\frac{1}{2}\|\tilde{u}_{\lambda}\|_{S^1}^2+ {\langle} u_0,\;\tilde{u}_{\lambda}{\rangle}_{S^1} -\frac{1}{2^{\star}}\int_{\hn}(u_0+\tilde{u}_{\lambda})^{2^{\star}}\;d\xi-{}_{(S^1)'}{\langle}f,u_0{\rangle}_{S^1}-{}_{(S^1)'}{\langle}f,\tilde{u}_{\lambda}{\rangle}_{S^1}\\
&=J_{f}(u_0)+ J_{0}\left(\tilde{u}_{\lambda}\right) +\langle u_0,\;\tilde{u}_{\lambda}\rangle_{S^1}+\frac{1}{2^{\star}}\int_{\hn}u_0^{2^{\star}}\;d\xi
\\
&+\frac{1}{2^{\star}}\int_{\hn}\tilde{u}_{\lambda}^{2^{\star}}\;d\xi-\frac{1}{2^{\star}}\int_{\hn}\left(u_0+\tilde{u}_{\lambda}\right)^{2^{\star}}\;d\xi-{}_{(S^1)'}{\langle}f,\tilde{u}_{\lambda}{\rangle}_{S^1}\\
&\leq J_{f}(u_0)+ J_{0}\left(\tilde{u}_{\lambda}\right) +\frac{1}{2^{\star}}\int_{\hn}\bigg[2^{\star}u_0^{2^{\star}-1}\tilde{u}_{\lambda}+|u_0|^{2^{\star}}+\tilde{u}_{\lambda}^{2^{\star}}-(u_0+\tilde{u}_{\lambda})^{2^{\star}}\bigg]\,d\xi\\
&< J_{f}(u_0)+ J_{0}\left(\tilde{u}_{\lambda}\right),
\end{align*}
and the Claim follows.

A straightforward computation yields
\begin{equation}\label{21-4-1}
\lim_{\lambda\to\infty} J_{0}\left(\tilde{u}_{\lambda}\right)=-\infty.
\end{equation}

In view of \eqref{21-4-1}, the scaling relations $\|\tilde{u}_{\lambda}\|_{S^1}^2
=\lambda^{Q-2}\|\tilde u\|_{S^1}^2$, and $\|\tilde{u}_{\lambda}\|_{L^{2^{\star}}}^{2^{\star}}
=\lambda^{Q}\|\tilde u\|_{L^{2^{\star}}}^{2^{\star}}$,  and the fact that $\tilde{u}$ solves \eqref{hom_eqn}, a direct computation gives
\[
\sup_{\lambda>0}J_{0}\left(\tilde{u}_{\lambda}\right)
=
J_{0}\left(\tilde{u}_{\lambda_{\max}}\right),
\qquad
\text{where }\lambda_{\max}=1.
\]

Therefore, substituting $\lambda_{\max}=1$ into the definition of $J_{0}$, we readily obtain
\[
\sup_{\lambda>0}J_{0}\left(\tilde{u}_{\lambda}\right)
=
\frac{1}{Q}\|\tilde{u}\|^2_{S^1}.
\]
Combining this identity with Claim~2 and \eqref{21-4-1}, we conclude that
\begin{equation}
\begin{split}\label{21-4-2}
J_{f}(u_0+\tilde{u}_{\lambda})
&<
J_{f}(u_0)+\frac{1}{Q}\|\tilde{u}\|^2_{S^1},
\qquad \forall\,\lambda>0,\\
J_{f}(u_0+\tilde{u}_{\lambda})
&<
J_{f}(u_0),
\qquad \text{for all sufficiently large }\lambda.
\end{split}
\end{equation}

Fix $\lambda_0>0$ sufficiently large so that \eqref{21-4-2} holds and the conclusion of Claim~1 is satisfied. We now define
\[
\gamma:=\inf_{\kappa\in\Gamma}\max_{\lambda\in[0,1]}
J_{f}\big(\kappa(\lambda)\big),
\]
where
\[
\Gamma:=
\left\{
\kappa\in C\left([0,1],S^1(\hn)\right):
\kappa(0)=u_0,\quad
\kappa(1)=u_0+\tilde{u}_{\lambda_0}
\right\}.
\]
Since $u_0\in\Upsilon_1$ and
$u_0+\tilde{u}_{\lambda_0}\in\Upsilon_2$, for every $\kappa\in\Gamma$, there exists $\lambda_{\kappa}\in(0,1)$ such that $  \kappa(\lambda_{\kappa})\in\Upsilon.$
Consequently,
\[
\max_{\lambda\in[0,1]}J_f\big(\kappa(\lambda)\big)
\geq
J_f\big(\kappa(\lambda_{\kappa})\big)
\geq
\inf_{\Upsilon}J_f(u)
=
c_1.
\]
Taking the infimum over $\kappa\in\Gamma$ and using
Lemma~\ref{lem_strict_less}, we obtain
\[
\gamma\geq c_1>c_0=J_f(u_0).
\]

{\bf Claim 3:} $\gamma<J_{f}(u_0)+ {\frac{1}{Q}\|\tilde{u}\|^2_{S^1} }$.

\noindent It is immediate from the scaling relation that $\lim_{\lambda\to 0}\|\tilde{u}_{\lambda}\|_{S^1}=0$. Define $\tilde{\kappa}(\lambda):=u_0+\tilde{u}_{\lambda\lambda_0}$. Then
\[
\lim_{\lambda\to 0}
\|\tilde{\kappa}(\lambda)-u_0\|_{S^1}=0,
\]
and hence $\tilde{\kappa}\in\Gamma$. Therefore, by \eqref{21-4-2},
\[
\gamma
\leq
\max_{\lambda\in[0,1]}J_f\big(\tilde{\kappa}(\lambda)\big)
=
\max_{\lambda\in[0,1]}
J_f\left(u_0+\tilde{u}_{\lambda\lambda_0}\right)
<
J_f(u_0)+\frac{1}{Q}\|\tilde{u}\|_{S^1}^2.
\]
This proves the claim. Finally, using Theorem~\ref{thm_JL_bubble},
$ J_0(\tilde{u})=\frac{1}{Q}\|\tilde{u}\|_{S^1}^2=\frac{1}{Q}\mathcal{S}_Q^{\frac{Q}{2}},
$
and we conclude that
\begin{equation*}
J_f(u_0)
<
\gamma
<
J_f(u_0)+J_0(\tilde{u}).
\end{equation*}

By Ekeland's variational principle, there exists a $(PS)$ sequence $\{u_n\}$ for $J_f$ at level $\gamma$. A standard argument shows that $\{u_n\}$ is bounded in $S^1(\hn)$. Moreover, since $\gamma<J_f(u_0)+J_0(\tilde{u}),$ 
 Theorem~\ref{thm_PS_decomp} implies that, up to a subsequence, $ u_n\to v_0,$
for some $v_0\in S^1(\hn)$ satisfying $J_f'(v_0)=0$ and $J_f(v_0)=\gamma$. Since $J_f(u_0)<\gamma$, it follows that $v_0\neq u_0$. Furthermore, the identity $J_f'(v_0)=0$ shows that $v_0$ is a weak solution of \eqref{pos_eqn_pert_Hn}. Combining this with Remark~\ref{rmk_equi_cric_pt}, the proof of the proposition is complete.
\end{proof}

Note that Propositions~\ref{fir-cri-pt} and \ref{sec-cric-pt} were proved under the assumption \eqref{J1:3}. Indeed, this condition is satisfied whenever the perturbation term $f$ has sufficiently small norm. 

\begin{lemma}\label{lem_J1:3}
If we have $\|f\|_{(S^1)'}<C_0\mathcal{S}_{Q}^\frac{Q}{4}$, then 
\begin{equation*}
        \inf_{\substack{u\in S^1(\hn)\\ \|u\|_{L^{2^{\star}}}=1}} \left\{C_0\|u\|_{S^1}^{\frac{Q+2}{2}}-{}_{(S^1)'}\langle f,u\rangle_{S^1}\right\} >0.
    \end{equation*}
\end{lemma}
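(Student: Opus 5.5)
For any $u$ with $\|u\|_{L^{2^\star}}=1$, the Folland--Stein--Sobolev inequality (with the normalization \eqref{sobolev-const}) gives $\|u\|_{S^1}\geq \mathcal{S}_Q^{1/2}\|u\|_{L^{2^\star}}=\mathcal{S}_Q^{1/2}$. I will therefore bound the bracket from below by a one-variable function of $s:=\|u\|_{S^1}\in[\mathcal{S}_Q^{1/2},\infty)$. The pairing term is controlled by duality, $|{}_{(S^1)'}\langle f,u\rangle_{S^1}|\leq \|f\|_{(S^1)'}\,\|u\|_{S^1}$. This yields
\[
C_0\|u\|_{S^1}^{\frac{Q+2}{2}}-{}_{(S^1)'}\langle f,u\rangle_{S^1}\;\geq\; g(s):=s\Bigl(C_0 s^{\frac{Q}{2}}-\|f\|_{(S^1)'}\Bigr).
\]

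Next I minimize $g$ over $s\geq \mathcal{S}_Q^{1/2}$. The factor $s$ is positive and increasing, and since $Q/2>0$ the factor $C_0 s^{Q/2}-\|f\|_{(S^1)'}$ is increasing in $s$. At $s=\mathcal{S}_Q^{1/2}$ the second factor equals $C_0\mathcal{S}_Q^{Q/4}-\|f\|_{(S^1)'}$, which is strictly positive by the smallness hypothesis. Hence on the whole half-line both factors are positive and increasing, so $g$ is increasing and
\[
\inf_{s\geq \mathcal{S}_Q^{1/2}} g(s)=g\bigl(\mathcal{S}_Q^{1/2}\bigr)=\mathcal{S}_Q^{1/2}\Bigl(C_0\mathcal{S}_Q^{Q/4}-\|f\|_{(S^1)'}\Bigr)>0 .
\]
This constant is a uniform positive lower bound for the infimum in the statement, independent of $u$.

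The argument is short and I do not expect a genuine obstacle. The one point to check is the exponent bookkeeping: the constant $\mathcal{S}_Q$ in \eqref{sobolev-const} must be normalized so that the sharp inequality reads $\mathcal{S}_Q\|u\|_{L^{2^\star}}^2\leq\|u\|_{S^1}^2$. This is consistent with $\|U\|_{S^1}^2=\|U\|_{L^{2^\star}}^{2^\star}=\mathcal{S}_Q^{Q/2}$, since $\frac{2}{2^\star}\cdot\frac{Q}{2}=\frac{Q-2}{2}$ gives $\|U\|_{L^{2^\star}}^2=\mathcal{S}_Q^{(Q-2)/2}$, and then $\mathcal{S}_Q\cdot\mathcal{S}_Q^{(Q-2)/2}=\mathcal{S}_Q^{Q/2}$. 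With this normalization the threshold $s^{Q/2}$ evaluated at $s=\mathcal{S}_Q^{1/2}$ produces exactly the power $\mathcal{S}_Q^{Q/4}$ in \eqref{eq:f_small}, so the hypothesis matches. The nonnegativity of $f$ from \hyperref[cond_on_f]{\textbf{(F)}} is not needed for this lemma.
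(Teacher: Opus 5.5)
Your argument is correct and is essentially the standard Tarantello-type computation that the paper invokes by citing \cite[Lemma~3.3]{BP20}. It combines the duality bound $|{}_{(S^1)'}\langle f,u\rangle_{S^1}|\le\|f\|_{(S^1)'}\|u\|_{S^1}$ with the Sobolev lower bound $\|u\|_{S^1}\ge\mathcal{S}_Q^{1/2}$, giving the explicit uniform bound $\mathcal{S}_Q^{1/2}\bigl(C_0\mathcal{S}_Q^{Q/4}-\|f\|_{(S^1)'}\bigr)>0$. The paper applies the Sobolev step on the set $\Upsilon$ (via Lemma~\ref{lem_C_0} and Remark~\ref{bdd_away}) rather than directly on the $L^{2^\star}$-unit sphere as you do; the two are equivalent because $C_0\|u\|_{S^1}^{(Q+2)/2}\|u\|_{L^{2^\star}}^{-Q/2}-{}_{(S^1)'}\langle f,u\rangle_{S^1}$ is $1$-homogeneous.
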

\begin{proof}
    Using Lemma~\ref{lem_C_0}, Remark~\ref{bdd_away} and arguing as in \cite[Lemma~3.3]{BP20} this follows.
\end{proof}

\begin{proof}[\textbf{Proof of Theorem \ref{thm_multi_pos_sol}}]
Combining Proposition~\ref{fir-cri-pt} and Proposition~\ref{sec-cric-pt} with Lemma~\ref{lem_J1:3}, we conclude the proof of Theorem \ref{thm_multi_pos_sol}.
\end{proof}

\smallskip

\noindent {\bf Acknowledgments.} 
R.~Basak is supported by the National Science and Technology Council of Taiwan under research grant number 113-2811-M-003-014. S.~Chakraborty thanks GITAM, Hyderabad, for its warm hospitality and an excellent research environment. T.~Rana is supported by the FWO Odysseus 1 grant G.0H94.18N: Analysis and Partial Differential Equations, the Methusalem program of the Ghent University Special Research Fund (BOF), (T.~Rana Project title: BOFMET2021000601). T.~Rana is also supported by a BOF postdoctoral fellowship at Ghent University BOF24/PDO/025. P.~Roychowdhury is grateful to IIT Hyderabad for its warm hospitality and excellent research environment during the completion of this work.

\smallskip

\noindent
{\bf Data availability.} 
Data sharing does not apply to this article as no datasets were
generated or analysed during the current study.

\smallskip

\noindent
{\bf Conflict of interest.} On behalf of all authors, the corresponding author states that there is no conflict of interest.

\bibliographystyle{alphaurl}
\bibliography{Arxiv}

@book {VSCC92,
    AUTHOR = {Varopoulos, N. Th. and Saloff-Coste, L. and Coulhon, T.},
     TITLE = {Analysis and geometry on groups},
    SERIES = {Cambridge Tracts in Mathematics},
    VOLUME = {100},
 PUBLISHER = {Cambridge University Press, Cambridge},
      YEAR = {1992},
     PAGES = {xii+156},
      ISBN = {0-521-35382-3},
   MRCLASS = {43A80 (47D03 47F05 58G11 60B15)},
  MRNUMBER = {1218884},
MRREVIEWER = {A.\ Hulanicki},
}

@article {GV00,
    AUTHOR = {Garofalo, Nicola and Vassilev, Dimiter},
     TITLE = {Regularity near the characteristic set in the non-linear
              {D}irichlet problem and conformal geometry of sub-{L}aplacians
              on {C}arnot groups},
   JOURNAL = {Math. Ann.},
  FJOURNAL = {Mathematische Annalen},
    VOLUME = {318},
      YEAR = {2000},
    NUMBER = {3},
     PAGES = {453--516},
      ISSN = {0025-5831,1432-1807},
   MRCLASS = {35H30 (35D05 58J05)},
  MRNUMBER = {1800766},
MRREVIEWER = {Luca\ Capogna},
       DOI = {10.1007/s002080000127},
       URL = {https://doi.org/10.1007/s002080000127},
}

@misc{Liu25,
      title={{CR} {Yamabe} Equation on the {Heisenberg} Group via the method of moving spheres}, 
      author={Congwen Liu},
      year={2025},
      eprint={2512.22458},
      archivePrefix={arXiv},
      primaryClass={math.AP},
      url={https://arxiv.org/abs/2512.22458}, 
}

@article {GL92,
    AUTHOR = {Garofalo, Nicola and Lanconelli, Ermanno},
     TITLE = {Existence and nonexistence results for semilinear equations on
              the {H}eisenberg group},
   JOURNAL = {Indiana Univ. Math. J.},
  FJOURNAL = {Indiana University Mathematics Journal},
    VOLUME = {41},
      YEAR = {1992},
    NUMBER = {1},
     PAGES = {71--98},
      ISSN = {0022-2518,1943-5258},
   MRCLASS = {35J65 (22E25)},
  MRNUMBER = {1160903},
MRREVIEWER = {Jana\ D.\ Madjarova},
       DOI = {10.1512/iumj.1992.41.41005},
       URL = {https://doi.org/10.1512/iumj.1992.41.41005},
}

@article {PP15,
    AUTHOR = {Palatucci, G. and Pisante, A.},
     TITLE = {A global compactness type result for {P}alais-{S}male
              sequences in fractional {S}obolev spaces},
   JOURNAL = {Nonlinear Anal.},
  FJOURNAL = {Nonlinear Analysis. Theory, Methods \& Applications. An
              International Multidisciplinary Journal},
    VOLUME = {117},
      YEAR = {2015},
     PAGES = {1--7},
      ISSN = {0362-546X,1873-5215},
   MRCLASS = {35A15 (35B33 35C20 35J20 35J61 49J45)},
  MRNUMBER = {3316602},
MRREVIEWER = {Changpin\ Li},
       DOI = {10.1016/j.na.2014.12.027},
       URL = {https://doi.org/10.1016/j.na.2014.12.027},
}

@article {Jaf99,
    AUTHOR = {Jaffard, S.},
     TITLE = {Analysis of the lack of compactness in the critical {S}obolev
              embeddings},
   JOURNAL = {J. Funct. Anal.},
  FJOURNAL = {Journal of Functional Analysis},
    VOLUME = {161},
      YEAR = {1999},
    NUMBER = {2},
     PAGES = {384--396},
      ISSN = {0022-1236,1096-0783},
   MRCLASS = {46E35},
  MRNUMBER = {1674639},
MRREVIEWER = {Stanis\l aw\ W\polhk edrychowicz},
       DOI = {10.1006/jfan.1998.3364},
       URL = {https://doi.org/10.1006/jfan.1998.3364},
}

@article {Ger98,
    AUTHOR = {G{\'e}rard, P.},
     TITLE = {Description du d\'efaut de compacit\'e{} de l'injection de
              {S}obolev},
   JOURNAL = {ESAIM Control Optim. Calc. Var.},
  FJOURNAL = {ESAIM. Control, Optimisation and Calculus of Variations.
              European Series in Applied and Industrial Mathematics},
    VOLUME = {3},
      YEAR = {1998},
     PAGES = {213--233},
      ISSN = {1292-8119,1262-3377},
   MRCLASS = {46E35 (49K10)},
  MRNUMBER = {1632171},
MRREVIEWER = {W.\ P.\ Ziemer},
       DOI = {10.1051/cocv:1998107},
       URL = {https://doi.org/10.1051/cocv:1998107},
}

@article {Sol95,
    AUTHOR = {Solimini, S.},
     TITLE = {A note on compactness-type properties with respect to
              {L}orentz norms of bounded subsets of a {S}obolev space},
   JOURNAL = {Ann. Inst. H. Poincar\'e{} C Anal. Non Lin\'eaire},
  FJOURNAL = {Annales de l'Institut Henri Poincar\'e{} C. Analyse Non
              Lin\'eaire},
    VOLUME = {12},
      YEAR = {1995},
    NUMBER = {3},
     PAGES = {319--337},
      ISSN = {0294-1449,1873-1430},
   MRCLASS = {46E35},
  MRNUMBER = {1340267},
MRREVIEWER = {Nenad\ Antoni\'c},
       DOI = {10.1016/S0294-1449(16)30159-7},
       URL = {https://doi.org/10.1016/S0294-1449(16)30159-7},
}

@book {Thanbook,
    AUTHOR = {Thangavelu, Sundaram},
     TITLE = {An introduction to the uncertainty principle},
    SERIES = {Progress in Mathematics},
    VOLUME = {217},
      NOTE = {Hardy's theorem on Lie groups,
              With a foreword by Gerald B.\ Folland},
 PUBLISHER = {Birkh\"auser Boston, Inc., Boston, MA},
      YEAR = {2004},
     PAGES = {xiv+174},
      ISBN = {0-8176-4330-3},
   MRCLASS = {43A80 (33C45 33C55 42C10 43-02 43A20)},
  MRNUMBER = {2008480},
MRREVIEWER = {Weixing\ Zheng},
       DOI = {10.1007/978-0-8176-8164-7},
       URL = {https://doi.org/10.1007/978-0-8176-8164-7},
}

@article {C11,
    AUTHOR = {Chamorro, Diego},
     TITLE = {Improved {S}obolev inequalities and {M}uckenhoupt weights on
              stratified {L}ie groups},
   JOURNAL = {J. Math. Anal. Appl.},
  FJOURNAL = {Journal of Mathematical Analysis and Applications},
    VOLUME = {377},
      YEAR = {2011},
    NUMBER = {2},
     PAGES = {695--709},
      ISSN = {0022-247X,1096-0813},
   MRCLASS = {43A80 (42B35 46E35)},
  MRNUMBER = {2769168},
MRREVIEWER = {Rudra\ P.\ Sarkar},
       DOI = {10.1016/j.jmaa.2010.11.047},
       URL = {https://doi.org/10.1016/j.jmaa.2010.11.047},
}

@article{BN83,
  author  = {Br{\'e}zis, Ha{\"\i}m and Nirenberg, Louis},
  title   = {Positive solutions of nonlinear elliptic equations involving critical {Sobolev} exponents},
  journal = {Communications on Pure and Applied Mathematics},
  volume  = {36},
  number  = {4},
  pages   = {437--477},
  year    = {1983},
  doi     = {10.1002/cpa.3160360405}
}

@article {Lio84b,
    AUTHOR = {Lions, P.-L.},
     TITLE = {The concentration-compactness principle in the calculus of
              variations. {T}he locally compact case. {II}},
   JOURNAL = {Ann. Inst. H. Poincar\'e{} Anal. Non Lin\'eaire},
  FJOURNAL = {Annales de l'Institut Henri Poincar\'e. Analyse Non
              Lin\'eaire},
    VOLUME = {1},
      YEAR = {1984},
    NUMBER = {4},
     PAGES = {223--283},
      ISSN = {0294-1449},
   MRCLASS = {49A50 (49A22)},
  MRNUMBER = {778974},
MRREVIEWER = {Gianfranco\ Bottaro},
       URL = {http://www.numdam.org/item?id=AIHPC_1984__1_4_223_0},
}

@article{Str84,
  author  = {Struwe, Michael},
  title   = {A global compactness result for elliptic boundary value problems involving limiting nonlinearities},
  journal = {Mathematische Zeitschrift},
  volume  = {187},
  number  = {4},
  pages   = {511--517},
  year    = {1984},
  doi     = {10.1007/BF01174186}
}

@article{BC88,
  author  = {Bahri, Abbas and Coron, Jean-Michel},
  title   = {On a nonlinear elliptic equation involving the critical {Sobolev} exponent: the effect of the topology of the domain},
  journal = {Communications on Pure and Applied Mathematics},
  volume  = {41},
  number  = {3},
  pages   = {253--294},
  year    = {1988},
  doi     = {10.1002/cpa.3160410302}
}

@article {Tar92,
    AUTHOR = {Tarantello, G.},
     TITLE = {On nonhomogeneous elliptic equations involving critical
              {S}obolev exponent},
   JOURNAL = {Ann. Inst. H. Poincar\'e{} C Anal. Non Lin\'eaire},
  FJOURNAL = {Annales de l'Institut Henri Poincar\'e{} C. Analyse Non
              Lin\'eaire},
    VOLUME = {9},
      YEAR = {1992},
    NUMBER = {3},
     PAGES = {281--304},
      ISSN = {0294-1449,1873-1430},
   MRCLASS = {35J20 (35J65)},
  MRNUMBER = {1168304},
MRREVIEWER = {Heinrich\ Begehr},
       DOI = {10.1016/S0294-1449(16)30238-4},
       URL = {https://doi.org/10.1016/S0294-1449(16)30238-4},
}

@article {LP87,
    AUTHOR = {Lee, John M. and Parker, Thomas H.},
     TITLE = {The {Y}amabe problem},
   JOURNAL = {Bull. Amer. Math. Soc. (N.S.)},
  FJOURNAL = {American Mathematical Society. Bulletin. New Series},
    VOLUME = {17},
      YEAR = {1987},
    NUMBER = {1},
     PAGES = {37--91},
      ISSN = {0273-0979,1088-9485},
   MRCLASS = {53-02 (35J60 53A30 53C20 58E15 58G30)},
  MRNUMBER = {888880},
MRREVIEWER = {J.\ L.\ Kazdan},
       DOI = {10.1090/S0273-0979-1987-15514-5},
       URL = {https://doi.org/10.1090/S0273-0979-1987-15514-5},
}

@article {Lio84a,
    AUTHOR = {Lions, P.-L.},
     TITLE = {The concentration-compactness principle in the calculus of
              variations. {T}he locally compact case. {I}},
   JOURNAL = {Ann. Inst. H. Poincar\'e{} Anal. Non Lin\'eaire},
  FJOURNAL = {Annales de l'Institut Henri Poincar\'e. Analyse Non
              Lin\'eaire},
    VOLUME = {1},
      YEAR = {1984},
    NUMBER = {2},
     PAGES = {109--145},
      ISSN = {0294-1449},
   MRCLASS = {49A50 (49A22)},
  MRNUMBER = {778970},
MRREVIEWER = {Gianfranco\ Bottaro},
       URL = {http://www.numdam.org/item?id=AIHPC_1984__1_2_109_0},
}

@article {Sch84,
    AUTHOR = {Schoen, Richard},
     TITLE = {Conformal deformation of a {R}iemannian metric to constant
              scalar curvature},
   JOURNAL = {J. Differential Geom.},
  FJOURNAL = {Journal of Differential Geometry},
    VOLUME = {20},
      YEAR = {1984},
    NUMBER = {2},
     PAGES = {479--495},
      ISSN = {0022-040X,1945-743X},
   MRCLASS = {58G30 (53C20 53C21)},
  MRNUMBER = {788292},
MRREVIEWER = {Raymond\ Barre},
       URL = {http://projecteuclid.org/euclid.jdg/1214439291},
}

@article {Aub76,
    AUTHOR = {Aubin, Thierry},
     TITLE = {Probl\`emes isop\'erim\'etriques et espaces de {S}obolev},
   JOURNAL = {J. Differential Geometry},
  FJOURNAL = {Journal of Differential Geometry},
    VOLUME = {11},
      YEAR = {1976},
    NUMBER = {4},
     PAGES = {573--598},
      ISSN = {0022-040X,1945-743X},
   MRCLASS = {58D15 (46E35 53C20)},
  MRNUMBER = {448404},
MRREVIEWER = {J.\ L.\ Kazdan},
       URL = {http://projecteuclid.org/euclid.jdg/1214433725},
}

@article {Tal76,
    AUTHOR = {Talenti, Giorgio},
     TITLE = {Best constant in {S}obolev inequality},
   JOURNAL = {Ann. Mat. Pura Appl. (4)},
  FJOURNAL = {Annali di Matematica Pura ed Applicata. Serie Quarta},
    VOLUME = {110},
      YEAR = {1976},
     PAGES = {353--372},
      ISSN = {0003-4622},
   MRCLASS = {46E35},
  MRNUMBER = {463908},
MRREVIEWER = {L.\ Cattabriga},
       DOI = {10.1007/BF02418013},
       URL = {https://doi.org/10.1007/BF02418013},
}

@article {PPT25,
    AUTHOR = {Palatucci, G. and Piccinini, M. and Temperini,
              L.},
     TITLE = {Struwe's global compactness and energy approximation of the
              critical {S}obolev embedding in the {H}eisenberg group},
   JOURNAL = {Adv. Calc. Var.},
  FJOURNAL = {Advances in Calculus of Variations},
    VOLUME = {18},
      YEAR = {2025},
    NUMBER = {3},
     PAGES = {731--754},
      ISSN = {1864-8258,1864-8266},
   MRCLASS = {35R03 (35A15 35J08 46E35 49J45)},
  MRNUMBER = {4926901},
MRREVIEWER = {Gabriel\ Ara\'ujo},
       DOI = {10.1515/acv-2024-0044},
       URL = {https://doi.org/10.1515/acv-2024-0044},
}

@article {BGV26,
    AUTHOR = {Biagi, Stefano and Pinamonti, Andrea and Vecchi, Eugenio},
     TITLE = {Critical singular problems in {Carnot} groups},
   JOURNAL = {J. Geom. Anal.},
  FJOURNAL = {Journal of Geometric Analysis},
    VOLUME = {36},
      YEAR = {2026},
    NUMBER = {8},
     PAGES = {Paper No. 268},
      ISSN = {1050-6926,1559-002X},
   MRCLASS = {35R03 (35B25 35B33 35J70)},
  MRNUMBER = {MR5099060},
       DOI = {10.1007/s12220-026-02516-8},
       URL = {https://doi.org/10.1007/s12220-026-02516-8},
}

@article {CU01,
    AUTHOR = {Citti, G. and Uguzzoni, F.},
     TITLE = {Critical semilinear equations on the {H}eisenberg group: the
              effect of the topology of the domain},
   JOURNAL = {Nonlinear Anal.},
  FJOURNAL = {Nonlinear Analysis. Theory, Methods \& Applications. An
              International Multidisciplinary Journal},
    VOLUME = {46},
      YEAR = {2001},
    NUMBER = {3},
     PAGES = {399--417},
      ISSN = {0362-546X,1873-5215},
   MRCLASS = {35J60 (35B33 58E05)},
  MRNUMBER = {1851860},
MRREVIEWER = {Dimiter\ N.\ Vassilev},
       DOI = {10.1016/S0362-546X(00)00138-3},
       URL = {https://doi.org/10.1016/S0362-546X(00)00138-3},
}

@misc{FV23,
      title={Liouville-type results for the {CR} {Yamabe} equation in the {Heisenberg} group}, 
      author={Joshua Flynn and Jérôme Vétois},
      journal={Ann. Sc. Norm. Super. Pisa Cl. Sci. (to appear)},
      year={2023},
      eprint={2310.14048},
      archivePrefix={arXiv},
      primaryClass={math.AP},
      url={https://arxiv.org/abs/2310.14048}, 
}

@book {BLU07,
    AUTHOR = {Bonfiglioli, A. and Lanconelli, E. and Uguzzoni, F.},
     TITLE = {Stratified {L}ie groups and potential theory for their
              sub-{L}aplacians},
    SERIES = {Springer Monographs in Mathematics},
 PUBLISHER = {Springer, Berlin},
      YEAR = {2007},
     PAGES = {xxvi+800},
      ISBN = {978-3-540-71896-3; 3-540-71896-6},
   MRCLASS = {22E30 (31C45 35-02 35H10 43A80)},
  MRNUMBER = {2363343},
MRREVIEWER = {Maria\ Stella\ Fanciullo},
}

@book {FS82,
    AUTHOR = {Folland, G. B. and Stein, Elias M.},
     TITLE = {Hardy spaces on homogeneous groups},
    SERIES = {Mathematical Notes},
    VOLUME = {28},
 PUBLISHER = {Princeton University Press, Princeton, NJ; University of Tokyo
              Press, Tokyo},
      YEAR = {1982},
     PAGES = {xii+285},
      ISBN = {0-691-08310-X},
   MRCLASS = {43A85 (22E45 42B30)},
  MRNUMBER = {657581},
MRREVIEWER = {Daryl\ Geller},
}

@article {Gam01,
    AUTHOR = {Gamara, Najoua},
     TITLE = {The {CR} {Y}amabe conjecture---the case {$n=1$}},
   JOURNAL = {J. Eur. Math. Soc. (JEMS)},
  FJOURNAL = {Journal of the European Mathematical Society (JEMS)},
    VOLUME = {3},
      YEAR = {2001},
    NUMBER = {2},
     PAGES = {105--137},
      ISSN = {1435-9855,1435-9863},
   MRCLASS = {32V20 (35B05 35J60 53C21 58E05)},
  MRNUMBER = {1831872},
MRREVIEWER = {John\ M.\ Lee},
       DOI = {10.1007/PL00011303},
       URL = {https://doi.org/10.1007/PL00011303},
}

@article {GY01,
    AUTHOR = {Gamara, Najoua and Yacoub, Ridha},
     TITLE = {C{R} {Y}amabe conjecture---the conformally flat case},
   JOURNAL = {Pacific J. Math.},
  FJOURNAL = {Pacific Journal of Mathematics},
    VOLUME = {201},
      YEAR = {2001},
    NUMBER = {1},
     PAGES = {121--175},
      ISSN = {0030-8730,1945-5844},
   MRCLASS = {32V20 (35B05 35J60 53C21 58E05)},
  MRNUMBER = {1867895},
MRREVIEWER = {John\ M.\ Lee},
       DOI = {10.2140/pjm.2001.201.121},
       URL = {https://doi.org/10.2140/pjm.2001.201.121},
}

@article {JL88,
    AUTHOR = {Jerison, David and Lee, John M.},
     TITLE = {Extremals for the {S}obolev inequality on the {H}eisenberg
              group and the {CR} {Y}amabe problem},
   JOURNAL = {J. Amer. Math. Soc.},
  FJOURNAL = {Journal of the American Mathematical Society},
    VOLUME = {1},
      YEAR = {1988},
    NUMBER = {1},
     PAGES = {1--13},
      ISSN = {0894-0347,1088-6834},
   MRCLASS = {53C15 (32F25 53C40 58G30)},
  MRNUMBER = {924699},
MRREVIEWER = {H.\ Jacobowitz},
       DOI = {10.2307/1990964},
       URL = {https://doi.org/10.2307/1990964},
}

@article {FS74,
    AUTHOR = {Folland, G. B. and Stein, E. M.},
     TITLE = {Estimates for the {$\bar \partial \sb{b}$} complex and
              analysis on the {H}eisenberg group},
   JOURNAL = {Comm. Pure Appl. Math.},
  FJOURNAL = {Communications on Pure and Applied Mathematics},
    VOLUME = {27},
      YEAR = {1974},
     PAGES = {429--522},
      ISSN = {0010-3640,1097-0312},
   MRCLASS = {35N15 (22E30 32K15 47G05)},
  MRNUMBER = {367477},
MRREVIEWER = {S.\ G.\ Gindikin},
       DOI = {10.1002/cpa.3160270403},
       URL = {https://doi.org/10.1002/cpa.3160270403},
}

@article {PP14,
    AUTHOR = {Palatucci, G. and Pisante, A.},
     TITLE = {Improved {S}obolev embeddings, profile decomposition, and
              concentration-compactness for fractional {S}obolev spaces},
   JOURNAL = {Calc. Var. Partial Differential Equations},
  FJOURNAL = {Calculus of Variations and Partial Differential Equations},
    VOLUME = {50},
      YEAR = {2014},
    NUMBER = {3-4},
     PAGES = {799--829},
      ISSN = {0944-2669,1432-0835},
   MRCLASS = {46E35 (35J20 35J61 35R11)},
  MRNUMBER = {3216834},
MRREVIEWER = {Giuseppe\ Di Fazio},
       DOI = {10.1007/s00526-013-0656-y},
       URL = {https://doi.org/10.1007/s00526-013-0656-y},
}

@article {Bo69,
    AUTHOR = {Bony, Jean-Michel},
     TITLE = {Principe du maximum, in\'egalite de {H}arnack et unicit\'e{}
              du probl\`eme de {C}auchy pour les op\'erateurs elliptiques
              d\'eg\'en\'er\'es},
   JOURNAL = {Ann. Inst. Fourier (Grenoble)},
  FJOURNAL = {Universit\'e{} de Grenoble. Annales de l'Institut Fourier},
    VOLUME = {19},
      YEAR = {1969},
     PAGES = {277--304 xii},
      ISSN = {0373-0956,1777-5310},
   MRCLASS = {47.65 (35.00)},
  MRNUMBER = {262881},
MRREVIEWER = {R.\ S.\ Freeman},
       URL = {http://www.numdam.org/item?id=AIF_1969__19_1_277_0},
}

@article {BP20,
    AUTHOR = {Bhakta, Mousomi and Pucci, Patrizia},
     TITLE = {On multiplicity of positive solutions for nonlocal equations
              with critical nonlinearity},
   JOURNAL = {Nonlinear Anal.},
  FJOURNAL = {Nonlinear Analysis. Theory, Methods \& Applications. An
              International Multidisciplinary Journal},
    VOLUME = {197},
      YEAR = {2020},
     PAGES = {111853, 22},
      ISSN = {0362-546X,1873-5215},
   MRCLASS = {35R11 (35A15 35B33 35J60)},
  MRNUMBER = {4079056},
MRREVIEWER = {Vincenzo\ Ambrosio},
       DOI = {10.1016/j.na.2020.111853},
       URL = {https://doi.org/10.1016/j.na.2020.111853},
}

@article {PV25,
    AUTHOR = {Prajapat, Jyotshana V. and Varghese, Anoop Skaria},
     TITLE = {Symmetry and classification of solutions to an integral
              equation in the {H}eisenberg group $\mathbb{H}^n$},
   JOURNAL = {Math. Ann.},
  FJOURNAL = {Mathematische Annalen},
    VOLUME = {392},
      YEAR = {2025},
    NUMBER = {1},
     PAGES = {659--700},
      ISSN = {0025-5831,1432-1807},
   MRCLASS = {45M20 (35B09 35J10 35R03)},
  MRNUMBER = {4887770},
MRREVIEWER = {Ram\ Krishan\ Sharma},
       DOI = {10.1007/s00208-025-03100-1},
       URL = {https://doi.org/10.1007/s00208-025-03100-1},
}

@article{CLMR25,
    AUTHOR = {Catino, Giovanni and Li, Yanyan and Monticelli, Dario D. and
              Roncoroni, Alberto},
     TITLE = {A {Liouville} theorem in the {Heisenberg} group},
   JOURNAL = {J. Eur. Math. Soc. (JEMS)},
  FJOURNAL = {Journal of the European Mathematical Society (JEMS)},
    VOLUME = {},
      YEAR = {2025},
    NUMBER = {},
     PAGES = {},
      ISSN = {},
   MRCLASS = {35J61 (32V20 35B33},
  MRNUMBER = {},
MRREVIEWER = {},
       DOI = {10.4171/JEMS/1705},
       URL = {https://doi.org/10.4171/JEMS/1705},
}

@article {BCG23,
    AUTHOR = {Bhakta, Mousomi and Chakraborty, Souptik and Ganguly, Debdip},
     TITLE = {Existence and multiplicity of positive solutions of certain
              nonlocal scalar field equations},
   JOURNAL = {Math. Nachr.},
  FJOURNAL = {Mathematische Nachrichten},
    VOLUME = {296},
      YEAR = {2023},
    NUMBER = {9},
     PAGES = {3816--3855},
      ISSN = {0025-584X,1522-2616},
   MRCLASS = {35R11 (35A15 35J60)},
  MRNUMBER = {4644540},
       DOI = {10.1002/mana.202000473},
       URL = {https://doi.org/10.1002/mana.202000473},
}

@book {AM07,
    AUTHOR = {Ambrosetti, Antonio and Malchiodi, Andrea},
     TITLE = {Nonlinear analysis and semilinear elliptic problems},
    SERIES = {Cambridge Studies in Advanced Mathematics},
    VOLUME = {104},
 PUBLISHER = {Cambridge University Press, Cambridge},
      YEAR = {2007},
     PAGES = {xii+316},
      ISBN = {978-0-521-86320-9; 0-521-86320-1},
   MRCLASS = {35J60 (35J20 47J15 47J30 58E05)},
  MRNUMBER = {2292344},
MRREVIEWER = {Sergey\ G.\ Pyatkov},
       DOI = {10.1017/CBO9780511618260},
       URL = {https://doi.org/10.1017/CBO9780511618260},
}
\end{document}